\title{Highest weight categories and strict polynomial functors}
\date{December 22, 2015}
\dedicatory{Dedicated to the memory of Professor Sandy Green.}
\author{Henning Krause}
\address{Henning Krause\\ Fakult\"at f\"ur Mathematik\\
Universit\"at Bielefeld\\ D-33501 Bielefeld\\ Germany.}
\email{hkrause@math.uni-bielefeld.de}
\newtheorem{lem}{Lemma}[section]
\newtheorem{prop}[lem]{Proposition}
\newtheorem{cor}[lem]{Corollary}
\newtheorem{thm}[lem]{Theorem}
\theoremstyle{remark}
\newtheorem{rem}[lem]{Remark}
\theoremstyle{definition}
\newtheorem{exm}[lem]{Example}
\newtheorem{defn}[lem]{Definition}
\numberwithin{equation}{section}
\newcommand{\smatrix}[1]{\left[\begin{smallmatrix}#1\end{smallmatrix}\right]}
\renewcommand{\mod}{\operatorname{mod}\nolimits}
\newcommand{\diag}{\operatorname{diag}\nolimits}
\newcommand{\card}{\operatorname{card}\nolimits}
\newcommand{\proj}{\operatorname{proj}\nolimits}
\newcommand{\inj}{\operatorname{inj}\nolimits}
\newcommand{\add}{\operatorname{add}\nolimits}
\newcommand{\free}{\operatorname{free}\nolimits}
\newcommand{\id}{\operatorname{id}\nolimits}
\newcommand{\Mod}{\operatorname{Mod}\nolimits}
\newcommand{\rep}{\operatorname{rep}\nolimits}
\newcommand{\Rep}{\operatorname{Rep}\nolimits}
\newcommand{\End}{\operatorname{End}\nolimits}
\newcommand{\Fun}{\operatorname{Fun}\nolimits}
\newcommand{\Hom}{\operatorname{Hom}\nolimits}
\newcommand{\RHom}{\operatorname{\mathbf{R}Hom}\nolimits}
\newcommand{\HOM}{\operatorname{\mathcal H \;\!\!\mathit o \mathit m}\nolimits}
\renewcommand{\Im}{\operatorname{Im}\nolimits}
\newcommand{\Ker}{\operatorname{Ker}\nolimits}
\newcommand{\Ext}{\operatorname{Ext}\nolimits}
\newcommand{\Tor}{\operatorname{Tor}\nolimits}
\newcommand{\Filt}{\operatorname{Filt}\nolimits}
\newcommand{\sgn}{\operatorname{sgn}\nolimits}
\newcommand{\Lotimes}{\otimes^{\mathbf{L}}}
\newcommand{\rej}{\operatorname{rej}\nolimits}
\newcommand{\tr}{\operatorname{tr}\nolimits}
\newcommand{\Ab}{\mathrm{Ab}}
\newcommand{\op}{\mathrm{op}}
\newcommand{\perf}{\mathrm{perf}}
\newcommand{\lto}{\longrightarrow}
\newcommand{\xto}{\xrightarrow}
\newcommand{\lotimes}{\otimes^{\mathbf L}}
\def\a{\alpha}
\def\b{\beta}
\def\g{\gamma}
\def\p{\phi}
\def\s{\sigma}
\def\la{\lambda}
\def\De{\Delta}
\def\Ga{\Gamma}
\def\La{\Lambda}
\def\A{{\mathcal A}}
\def\B{{\mathcal B}}
\def\M{{\mathcal M}}
\def\P{{\mathcal P}}
\def\bfD{{\mathbf D}}
\def\fra{{\mathfrak a}}
\def\frS{{\mathfrak S}}
\begin{document}

\begin{abstract}
  Highest weight categories are described in terms of standard objects
  and recollements of abelian categories, working over an arbitrary
  commutative base ring. Then the highest weight structure for
  categories of strict polynomial functors is explained, using the
  theory of Schur and Weyl functors.  A consequence is the well-known
  fact that Schur algebras are quasi-hereditary.
\end{abstract}

\maketitle
\setcounter{tocdepth}{1}
\tableofcontents

\section*{Introduction}

Highest weight categories and quasi-hereditary algebras were
introduced in a series of papers by Cline, Parshall, and Scott
\cite{CPS1988, PS1988,Sc1987}. A classical example are polynomial
representations of general linear groups which are equivalent to
modules over Schur algebras \cite{Gr1980}, and these are well-known to
be quasi-hereditary algebras \cite{DEP1980,Do1987}.  These notes
present an alternative approach to this subject, and a distinctive
feature is that we work over an arbitrary commutative base ring.

The first part is devoted to giving descriptions of highest weight
categories in terms of standard objects (Theorem~\ref{th:standard})
and recollements of abelian categories (Theorem~\ref{th:khwt}).  Also,
we discuss Ringel duality which is based on the notion of a
characteristic tilting object \cite{Ri1991}, and we establish a
precise connection with Serre duality (Theorem~\ref{th:ringel-dual}).

The filtration of a highest weight category via recollements of
abelian categories induces a filtration of the corresponding derived
category via recollements of triangulated categories
(Proposition~\ref{pr:kgldim}). Such filtrations of derived categories
provide a somewhat characteristic property of highest weight
categories and have been studied extensively \cite{Ka2015}.

For another interesting approach towards highest weight categories via
$A_\infty$-categories and bocses we refer to recent work in
\cite{KKO2014,Ku2015}. 

In the second part of these notes we explain the highest weight
structure for categories of strict polynomial functors \cite{FS1997},
working over an arbitrary commutative ring and using some of the
principal results from the theory of Schur and Weyl functors
\cite{ABW1982}.  The essential ingredients of the highest weight
structure are:
\begin{enumerate}
\item[--] The Weyl functors are precisely the standard objects
  (Theorem~\ref{th:weight}).
\item[--] The Cauchy decomposition provides a filtration of any
  projective object whose associated graded object is a direct sum
  of Weyl functors (Corollary~\ref{co:cauchy-lambda}).
\item[--] The exterior powers provide a characteristic tilting object and
  the category of strict polynomial functors is Ringel self-dual
  (Theorem~\ref{th:char-tilting}).
\end{enumerate}

The material in the second part is elementary, based to a large extent
on classical facts from multilinear algebra. The language of strict
polynomial functors is employed because of its flexibility. Evaluating
strict polynomial functors at a free module of finite rank makes it
easy to transfer this work to the representation theory of Schur
algebras; for an explicit discussion we recommend Hashimoto's notes
\cite{Ha2012}.

\part{Highest Weight categories}

\section{$k$-linear highest weight categories}\label{se:khwt}

Highest weight categories were introduced by Cline, Parshall, and
Scott \cite{CPS1988,PS1988}. The original definition is formulated in
the setting of abelian length categories. Versions for $k$-linear
exact categories have been considered by various authors
\cite{DS1994,Ef2014,Ro2008}.

From now on we fix a commutative ring $k$. We consider additive
categories that are \emph{$k$-linear}. This means the morphisms sets
are $k$-modules, and the composition maps are $k$-linear.

For a ring $\La$, let $\Mod\La$ denote the category of (right)
$\La$-modules and let $\proj\La$ denote the full subcategory of
finitely generated projective $\La$-modules.

\subsection*{$k$-linear exact categories}

Let $\A$ be an exact category \cite{Qu1973}. Thus $\A$ is an additive
category and the exact structure of $\A$ is given by a distinguished
class of \emph{exact sequences} $0\to X\to Y\to Z\to 0$ in $\A$ which
are kernel-cokernel pairs and sometimes called \emph{admissible}.

We recall from \cite[Appendix~A]{Ke1990} a useful
construction. Suppose that $\A$ is essentially small and let
$\widehat\A$ denote the category of left exact functors $\A^\op\to\Ab$
into the category of abelian groups. Then $\widehat\A$ is a
Grothendieck abelian category and the Yoneda functor
\[\A\lto \widehat\A,\quad X\mapsto h_X=\Hom_\A(-,X)\]
is fully faithful and exact, inducing a bijection
\[\Ext_\A^1(X,Y)\xto{\sim}\Ext^1_{\widehat\A}(h_X,h_Y).\]
Note that any exact functor $f\colon\A\to\B$ extends uniquely to an
exact and colimit preserving functor $\widehat\A\to\widehat\B$ (the
left adjoint of the functor $\widehat\B\to\widehat\A$ given by
precomposition with $f$).

Recall that an object $P$ in $\A$ is \emph{projective} if for every
exact sequence $0\to X\to Y\to Z\to 0$ in $\A$ the induced map
$\Hom_\A(P,Y)\to \Hom_\A(P,Z)$ is surjective. The object $P$ is a
\emph{generator} if for every object $X$ there is an exact sequence
$0\to N\to P^r\to X\to 0$ for some positive integer $r$.

\begin{lem}\label{le:exact}
  Suppose that $\A$ admits a projective generator $P$ and set
  $\La=\End_\A(P)$.  Then evaluation at $P$ induces an
  equivalence $\widehat\A\xto{\sim}\Mod\La$.

  Conversely, if $\widehat\A$ is equivalent to $\Mod\Ga$ for some ring
  $\Ga$, then the equivalence identifies $\Ga$ with a projective
  generator of $\A$.
\end{lem}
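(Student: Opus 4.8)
The plan is to use the construction of $\widehat\A$ as left exact functors $\A^\op\to\Ab$ together with the classical characterization of module categories via projective generators (a Morita-type argument). First I would recall that $\widehat\A$ is a Grothendieck abelian category in which the representable functors $h_X$ are precisely a generating family of \emph{finitely generated projective} objects; indeed, by Yoneda $\Hom_{\widehat\A}(h_X,F)\cong F(X)$ naturally, which is exact in $F$ (so $h_X$ is projective in $\widehat\A$) and commutes with filtered colimits (so $h_X$ is finitely presented). Since every object of $\widehat\A$ is a filtered colimit of finitely presented subobjects built from representables, the $h_X$ form a generating set. Now if $P$ is a projective generator of $\A$ with $\La=\End_\A(P)$, then $h_P$ is a projective object of $\widehat\A$; moreover, for any $X\in\A$ the generator property gives an exact sequence $0\to N\to P^r\to X\to 0$ in $\A$, hence (by exactness of Yoneda) an exact sequence $h_N\to h_P^r\to h_X\to 0$ in $\widehat\A$, and more generally $h_P$ generates all the $h_X$; since these generate $\widehat\A$, the object $h_P$ is itself a projective generator of the Grothendieck category $\widehat\A$.

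The key step is then to invoke the Gabriel–Popescu / classical Morita theorem: a Grothendieck abelian category with a projective generator $U$ is equivalent, via $\Hom(U,-)$, to $\Mod\End(U)$. Applying this with $U=h_P$ and noting $\End_{\widehat\A}(h_P)\cong\End_\A(P)=\La$ gives the equivalence $\widehat\A\xrightarrow{\sim}\Mod\La$, $F\mapsto\Hom_{\widehat\A}(h_P,F)\cong F(P)$, which is exactly evaluation at $P$. That disposes of the first assertion. I would need to double-check that $h_P$ is \emph{small} (compact) in $\widehat\A$, but this is immediate from the fact that $\Hom_{\widehat\A}(h_P,-)\cong\ev_P$ preserves all colimits, being computed as the value of a colimit of functors at the object $P$; alternatively one cites the finite presentability noted above.

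For the converse, suppose $\widehat\A\simeq\Mod\Ga$. Under this equivalence $\Ga$ corresponds to a projective generator $G$ of $\widehat\A$ which is finitely generated projective as a $\Ga$-module, hence compact; a compact projective object of $\widehat\A$ lies in the essential image of the Yoneda embedding, i.e. $G\cong h_Q$ for some $Q\in\A$ — here one uses that $\A$ is recovered inside $\widehat\A$ as (a category equivalent to) the full subcategory of compact projective objects, a standard fact from \cite[Appendix~A]{Ke1990}, or argues directly that a compact object is a retract of a finite coproduct of representables and a projective retract of $h_{X_1}\oplus\cdots\oplus h_{X_n}=h_{X_1\oplus\cdots\oplus X_n}$ is again representable since Yoneda is fully faithful and reflects direct summands. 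Then $\End_\A(Q)\cong\End_{\widehat\A}(h_Q)\cong\End_{\Ga}(\Ga)\cong\Ga$, and $Q$ is a generator of $\A$ because $h_Q$ generates $\widehat\A$ and hence every $h_X$ admits an epimorphism $h_Q^r\twoheadrightarrow h_X$, which by full faithfulness and exactness of Yoneda comes from an admissible epimorphism $Q^r\to X$ in $\A$; projectivity of $Q$ in $\A$ follows from projectivity of $h_Q$ in $\widehat\A$ together with exactness of Yoneda. The main obstacle, and the point requiring the most care, is identifying the essential image of the Yoneda functor as exactly the compact projective objects of $\widehat\A$ — i.e. showing that a compact projective $F\in\widehat\A$ is representable — since the other implications are formal once the Morita theorem is in hand.
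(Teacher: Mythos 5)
Your overall strategy is sound — identify $h_P$ as a compact projective generator of the Grothendieck category $\widehat\A$ and apply the Morita-type theorem for Grothendieck categories with a compact projective generator — and this matches the paper in spirit, though the paper prefers to simply write down the explicit quasi-inverse $M\mapsto\Hom_\La(\Hom_\A(P,-),M)$, which avoids citing a general theorem and makes it obvious the equivalence is evaluation at $P$.

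However, there is a genuine error in your setup. You claim that \emph{all} representables $h_X$ are projective in $\widehat\A$ because $\Hom_{\widehat\A}(h_X,F)\cong F(X)$ is ``exact in $F$''. This is false: $\widehat\A$ is a reflective subcategory of all additive functors $\A^\op\to\Ab$, so kernels are computed pointwise but cokernels require sheafification with respect to the exact structure of $\A$. Evaluation at $X$ is therefore only left exact on $\widehat\A$. Indeed, under the very equivalence $\widehat\A\xto{\sim}\Mod\La$ you are proving, $h_X$ corresponds to the $\La$-module $\Hom_\A(P,X)$, which is projective if and only if $X$ is projective in $\A$. (Take $\A=\mod(\La,k)$ with $\La=k[x]/(x^2)$, $k$ a field, and $X=k$: then $h_X$ is not projective in $\widehat\A$.) As a consequence, the essential image of the Yoneda embedding is \emph{not} the full subcategory of compact projectives, and it is likewise not true that every compact object of $\widehat\A$ is a retract of a finite coproduct of representables — the latter would force every finitely presented $\La$-module to lie in $\mod(\La,k)$. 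You correctly flag the identification of representables with compact projectives as the delicate point, but the resolution is that this identification simply does not hold; only the projective representables, i.e.\ those $h_X$ with $X$ projective in $\A$, occur among the compact projectives.

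What survives, and what you actually need, is the weaker statement that a compact \emph{projective} object $G$ of $\widehat\A$ is a retract of a finite coproduct of representables: every object receives an epimorphism from a coproduct of representables (as the paper also records), projectivity of $G$ splits it, and compactness confines the splitting to a finite sub-coproduct. Combined with the (implicit) idempotent completeness of $\A$ this yields $G\cong h_Q$, and projectivity of $Q$ in $\A$ follows from exactness and full faithfulness of Yoneda, as you say. So your converse argument goes through once the false premise about the $h_X$ is removed; the forward direction requires only the projectivity of $h_P$ (which you do establish using projectivity of $P$) and the fact that the $h_X$ generate $\widehat\A$, so it is unaffected.
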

\begin{proof}
  Sending a $\La$-module $M$ to $\Hom_\La(\Hom_\A(P,-),M)$ gives a
  quasi-inverse $\Mod\La\to\widehat\A$. For the converse observe that
  any functor in $\widehat \A$ is the epimorphic image of a direct sum
  of representable functors. Thus $\Ga$ identifies with a direct
  summand of a finite direct sum of representables.
\end{proof}

For a $k$-algebra $\La$ we denote by $\mod(\La,k)$ the category of
$\La$-modules that are finitely generated projective over $k$. This is
a $k$-linear exact category where a sequence is by definition exact if
it is split exact when restricted to the category of $k$-modules. Note
that $\Hom_k(-,k)$ induces a $k$-linear equivalence
\[\mod(\La,k)^\op\xto{\sim}\mod(\La^\op,k).\]

Suppose that $\A$ admits a projective generator $P$ and set
$\La=\End_\A(P)$. If $\Hom_\A(P,X)$ is finitely generated projective
over $k$ for all $X$ in $\A$, then $\Hom_\A(P,-)$ and evaluation at
$P$ make the following diagram commutative.
\[
\begin{tikzcd}
\A \arrow[tail]{d}\arrow{rrr}{\Hom_\A(P,-)}&&&\mod(\La,k)\arrow[tail]{d}\\
\widehat\A \arrow{rrr}{\sim}&&&\Mod\La
\end{tikzcd}
\]
All functors are fully faithful and exact, but the top one need not be an equivalence.

\subsection*{Recollements}

We recall the definition of a recollement using the standard notation
\cite[1.4]{BBD1982}. 

\begin{defn}
  A \emph{recollement} of abelian (triangulated) categories is a
  diagram of functors
\begin{equation}\label{eq:rec}
\begin{tikzcd}
  \A' \arrow[tail]{rr}[description]{i_*=i_!} &&\A
  \arrow[twoheadrightarrow,yshift=-1.5ex]{ll}{i^!}
  \arrow[twoheadrightarrow,yshift=1.5ex]{ll}[swap]{i^*}
  \arrow[twoheadrightarrow]{rr}[description]{j^!=j^*} &&\A''
  \arrow[tail,yshift=-1.5ex]{ll}{j_*}
  \arrow[tail,yshift=1.5ex]{ll}[swap]{j_!}
\end{tikzcd}
\end{equation}
satisfying the following conditions:
\begin{enumerate}
\item  $i_*$ and $j^*$ are exact functors of  abelian (triangulated) categories.
\item $(i^*,i_*)$, $(j^*,j_*)$, $(i_!,i^!)$, and $(j_!,j^!)$  are adjoint pairs.
\item $i_*$, $j_*$, and $j_!$ are fully faithful functors.
\item An object in $\A$ is annihilated by $j^*$ iff it
  is in the essential image of $i_*$.
\end{enumerate}
The recollement is called \emph{homological} if the functor $i_*$
induces for all $X,Y\in\A'$ and $p\ge 0$ isomorphisms
\[\Ext^p_{\A'}(X,Y)\xto{\sim}\Ext^p_\A(i_*(X),i_*(Y)).\]
\end{defn}

A diagram \eqref{eq:rec} without the left adjoints $i^*$ and $j_!$
that satisfies all the relevant conditions of a recollement is called
\emph{localisation sequence}. Analogously, a diagram \eqref{eq:rec}
without the right adjoints $i^!$ and $j_*$ that satisfies all the
relevant conditions of a recollement is called \emph{colocalisation
  sequence}.

Given a recollement \eqref{eq:rec} and an object $X$ in
$\A$, we have the counit $j_!j^!(X)\to X$ and the unit
$X\to i_*i^*(X)$.  These fit into an exact sequence
\[j_!j^!(X)\lto X\lto i_*i^*(X)\lto 0\qquad \text{($\A$ abelian)}\]
and an exact triangle
  \[j_!j^!(X)\lto X\lto i_*i^*(X)\lto \qquad \text{($\A$ triangulated)}.\]

\subsection*{$k$-linear highest weight categories}

We give the definition of a highest weight category relative to a base
ring $k$, following closely Rouquier \cite{Ro2008}. We assume that the
set of weights is finite and totally ordered, leaving the
generalisation to locally finite posets to the interested reader.

\begin{defn}\label{de:khwt}
  Let $\A$ be a $k$-linear exact category. Suppose that
  $\A\xto{\sim}\mod(\La,k)$ for some $k$-algebra $\La$ that is
  finitely generated projective over $k$. Then $\A$ is called
  \emph{$k$-linear highest weight category} if there are finitely many
  exact sequences
\begin{equation}\label{eq:khwt}
0\lto U_i\lto P_i\lto \De_i \lto 0\qquad (1\le i\le n)
\end{equation}
  in $\A$ satisfying the following:
\begin{enumerate}
\item $\End_\A(\De_i)\cong k$ for all $i$.
\item $\Hom_\A(\De_i,\De_j)=0$ for all $i> j$. 
\item $U_i$ belongs to $\Filt(\De_{i+1},\ldots,\De_n)$ for all $i$.
\item $\bigoplus_{i=1}^n P_i$ is a projective generator of $\A$.
\end{enumerate}
The objects $\De_1,\ldots,\De_n$ are called \emph{standard objects}. 
\end{defn}

Note that the sequence \eqref{eq:khwt} implies 
\[\Ext^1_\A(\De_i,\De_j)=0\qquad\text{for all}\qquad  i\ge j.\]

The structure of a highest weight category is determined by the
ordered set of standard objects; see Theorem~\ref{th:standard}. Thus
an \emph{equivalence} of highest weight categories is an equivalence
of categories which preserves standard objects and their ordering.

Following \cite{CPS1990} we call a $k$-algebra \emph{split
  quasi-hereditary} if it is the endomorphism ring of a projective
generator of a $k$-linear highest weight category. Later we will see
that the standard objects $\De_1,\ldots,\De_n$ in $\mod(\La,k)$ give
rise to a sequence of surjective algebra homomorphisms
\[\La=\La_n\to\La_{n-1}\to\cdots\to\La_1\to\La_0=0\]
which makes it possible to define split quasi-hereditary algebras
in terms of ideal chains.

\subsection*{Standardisation}
We give an axiomatic description of the standard objects of a highest
weight category.  

Let $\A$ be an exact category and fix a sequence of objects
$\De_1,\ldots,\De_n$. We consider the following conditions:
\begin{enumerate}
\item[($\De$1)] $\Hom_\A(\De_i,\De_j)=0$ for all $i>j$. 
\item[($\De$2)] $\Ext^1_\A(\De_i,\De_j)=0$ for all $i\ge j$. 
\item[($\De$3)] $\Ext^1_\A(X,\De_j)$ is finitely generated over
  $\End_\A(\De_j)^\op$ for all $X\in\A$.
\end{enumerate}

For a ring $\La$ let $\free \La$ denote the category of free
$\La$-modules of finite rank.

\begin{lem}\label{le:standard-coloc}
Suppose that \emph{($\De$1)--($\De$2)} hold and set  $\Ga_t=\End_\A(\De_t)$
for $1\le t\le n$. Then the functor
  $\Hom_\A(\De_t,-)$ induces a colocalisation sequence
\begin{equation}\label{eq:standard-coloc}
\begin{tikzcd}
  \Filt(\De_1,\ldots,\De_{t-1})\arrow[tail,yshift=-0.75ex]{rr}[swap]{i_*}
  &&\Filt(\De_1,\ldots,\De_t)
  \arrow[twoheadrightarrow,yshift=0.75ex]{ll}[swap]{i^*}
  \arrow[twoheadrightarrow,yshift=-0.75ex]{rr}[swap]{j^!} &&\free \Ga_t
  \arrow[tail,yshift=0.75ex]{ll}[swap]{j_!}
\end{tikzcd}
\end{equation}
and all functors are exact. Each $X$ in $\Filt(\De_1,\ldots,\De_t)$
fits into an exact sequence
\[0\lto j_!j^!(X)\lto X\lto i_*i^*(X)\lto 0.\]
\end{lem}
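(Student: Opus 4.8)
The plan is to build the colocalisation sequence directly from the axioms, using $\Filt(\De_1,\dots,\De_t)$ as the ambient exact category $\A$ and $\Filt(\De_1,\dots,\De_{t-1})$ as the subcategory $\A'$. The key point is that the functor $\De_t\otimes_{\Ga_t}-\colon\free\Ga_t\to\A$ (sending $\Ga_t^r$ to $\De_t^r$) should serve as $j_!$, and $j^!=\Hom_\A(\De_t,-)$ should be its right adjoint. First I would check that $j^!$ lands in $\free\Ga_t$: for $X$ built by an iterated extension of copies of $\De_1,\dots,\De_t$, one shows by induction along the filtration, using ($\De$1) and ($\De$2), that $\Hom_\A(\De_t,-)$ kills the contributions of $\De_1,\dots,\De_{t-1}$ and is exact on the extensions by $\De_t$, so $\Hom_\A(\De_t,X)$ is a finitely generated free $\Ga_t$-module whose rank equals the multiplicity of $\De_t$ in any $\De$-filtration of $X$. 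The adjunction $\Hom_{\free\Ga_t}(M,\Hom_\A(\De_t,X))\cong\Hom_\A(\De_t\otimes_{\Ga_t}M,X)$ is then formal for free $M$, since $\Ga_t=\End_\A(\De_t)$.

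Next I would analyse the unit and counit. The counit $j_!j^!(X)=\De_t\otimes_{\Ga_t}\Hom_\A(\De_t,X)\to X$ should be a monomorphism with cokernel in $\Filt(\De_1,\dots,\De_{t-1})$; this is where the filtration structure does the real work. Given a $\De$-filtration of $X$, one rearranges it — using ($\De$1) to push all the $\De_t$-layers to the bottom and ($\De$2), specifically $\Ext^1_\A(\De_i,\De_t)=0$ for $i<t$, is \emph{not} available, so instead one uses $\Ext^1_\A(\De_t,\De_j)=0$ for $j\le t$ together with $\Hom_\A(\De_t,\De_j)=0$ for $j<t$ to split off a subobject isomorphic to $\De_t^{\,r}$ — to produce an exact sequence $0\to\De_t^{\,r}\to X\to X'\to 0$ with $X'\in\Filt(\De_1,\dots,\De_{t-1})$. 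One then identifies $\De_t^{\,r}$ with $j_!j^!(X)$ via the counit, and sets $i^*(X)=X'$, with $i_*$ the inclusion. Exactness of $i_*$ is clear; exactness of $i^*$ and $j^!$ follows because a short exact sequence of objects with $\De$-filtrations splits, when restricted to $k$-modules, compatibly with the filtrations, so applying $\Hom_\A(\De_t,-)$ stays exact.

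Finally I would verify the remaining recollement-type conditions for a colocalisation sequence: $i_*$ and $j_!$ are fully faithful (immediate, being an inclusion and the functor $\De_t\otimes_{\Ga_t}-$ on free modules, faithful since $\Ga_t=\End(\De_t)$ is the full endomorphism ring), and an object of $\A$ lies in the essential image of $i_*$ iff $j^!$ annihilates it — one direction is $\Hom_\A(\De_t,\De_j)=0$ for $j<t$, the other follows from the exact sequence $0\to j_!j^!(X)\to X\to i_*i^*(X)\to 0$ since $j^!(X)=0$ forces $j_!j^!(X)=0$. I expect the main obstacle to be the rearrangement of the $\De$-filtration in the counit step: one must show carefully that the $\De_t$-layers can be collected into a genuine subobject $\De_t^{\,r}\hookrightarrow X$ (not merely a subquotient), which requires the vanishing of the relevant $\Ext^1$ groups to split the intermediate extensions, and one must check the resulting quotient again carries a $\De$-filtration by $\De_1,\dots,\De_{t-1}$ — this is essentially a dévissage that has to be organised so that ($\De$1) and ($\De$2) are invoked in the right order.
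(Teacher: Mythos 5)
Your approach matches the paper's: a d\'evissage along a $\De$-filtration of $X$, using $\Ext^1_\A(\De_t,\De_j)=0$ for $j\le t$ to split off a subobject $\De_t^{\,r}\subseteq X$ with quotient $X'$ in $\Filt(\De_1,\ldots,\De_{t-1})$, and then $\Hom_\A(\De_t,\De_j)=0$ for $j<t$ to identify $\De_t^{\,r}$ with $j_!j^!(X)$. The paper organises the d\'evissage as an induction on filtration length, peeling off the top quotient $\De_j$ (if $j<t$ nothing changes, if $j=t$ the extension splits and one adds a summand $\De_t$), which avoids the explicit ``rearranging'' step you describe but is the same idea; you are also right that ($\De$2) must be used in the direction $\Ext^1_\A(\De_t,\De_j)=0$, not the unavailable $\Ext^1_\A(\De_i,\De_t)=0$ for $i<t$.

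One correction is needed in your justification of exactness. You write that $j^!$ and $i^*$ are exact because short exact sequences of $\De$-filtered objects split over $k$; but in the Standardisation subsection $\A$ is an arbitrary exact category, not $\mod(\La,k)$, so there is no ambient $k$-module category to restrict to, and even in the $k$-linear case $k$-splitness alone does not give right exactness of $\Hom_\A(\De_t,-)$. The correct and elementary reason, which you already anticipate earlier when you say $\Hom_\A(\De_t,-)$ is ``exact on the extensions by $\De_t$'', is that ($\De$2) makes $\De_t$ a projective object of the exact category $\Filt(\De_1,\ldots,\De_t)$, hence $j^!=\Hom_\A(\De_t,-)$ is exact there. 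Exactness of $i^*$ then follows from the snake lemma applied to the functorial short exact sequences $0\to j_!j^!(X)\to X\to i_*i^*(X)\to 0$, exactly as in the paper.
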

\begin{proof}
  The object $\De_t$ is projective in $\Filt(\De_1,\ldots,\De_t) $.
  An induction on the length of a filtration of an object $X$ in
  $\Filt(\De_1,\ldots,\De_t)$ yields some $r\ge 0$ and an exact
  sequence $0\to X''\to X\to X'\to 0$ with $X'$ in
  $\Filt(\De_1,\ldots,\De_{t-1})$ and $X''\cong \De_t^r$. To see this,
  let $0\to Y\to X\to\De_j\to 0$ be an exact sequence in
  $\Filt(\De_1,\ldots,\De_t) $. The assertion for $X$ follows from
  that for $Y$. This is immediate If $j<t$ because we take $X''=Y''$.
  Otherwise, $X\cong Y\oplus\De_t$ and we can add
  $0\to\De_t\xto{\id}\De_t\to 0\to 0$ to the exact sequence for $Y$.

  Now set $i^*(X)=X'$. Also, set $j^!(X)=\Hom_\A(\De_t,X)$ and
  $j_!(\Ga_t^r)=\De_t^r$.  This gives $i_*i^*(X)\cong X'$ and
  $j_!j^!(X)\cong X''$. The exactness is obvious for the functors
  $i_*$, $j^!$, and $j_!$.  For $i^*$ it follows from the snake lemma.
\end{proof}

\begin{lem}\label{le:standard}
Suppose that \emph{($\De$2)--($\De$3)} hold.
Then there are exact sequences
\begin{equation*}
0\lto U_t\lto P_t\lto \De_t \lto 0\qquad (1\le t\le n)
\end{equation*}
in $\A$ such that $U_t$ belongs to $\Filt(\De_{t+1},\ldots,\De_n)$ for
all $t$ and $\bigoplus_{t=1}^n P_t$ is a projective generator of
$\Filt(\De_1,\ldots,\De_n)$.
\end{lem}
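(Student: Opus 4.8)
The plan is to construct the sequences $0\to U_t\to P_t\to\De_t\to 0$ by downward induction on $t$, so that at stage $t$ the object $P_t$ is the projective cover (in a suitable sense) of $\De_t$ inside $\Filt(\De_1,\ldots,\De_n)$, built by successively killing higher $\Ext^1$'s against $\De_{t+1},\ldots,\De_n$. Start with $P_n=\De_n$ and $U_n=0$; by ($\De$2) the object $\De_n$ is already projective in $\Filt(\De_n)$, and more is true: since $\Ext^1_\A(\De_n,\De_j)=0$ for all $j$ with $j\le n$ is not what we need — rather, what we will eventually want is that $\De_n$ is projective in $\Filt(\De_1,\ldots,\De_n)$. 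For a general $t<n$, suppose inductively that $P_{t+1},\ldots,P_n$ have been built with $U_s\in\Filt(\De_{s+1},\ldots,\De_n)$ and that $P_s$ is projective in $\Filt(\De_1,\ldots,\De_n)$ for $s>t$. I would then enlarge $\De_t$ step by step: using ($\De$3), $\Ext^1_\A(\De_t,\De_n)$ is finitely generated over $\Ga_n^\op=\End_\A(\De_n)^\op$, say generated by classes of extensions assembled into a single extension $0\to\De_n^{m_n}\to E_n\to\De_t\to 0$; by construction $\Ext^1_\A(E_n,\De_n)=0$. Repeat with $\De_{n-1}$: using ($\De$3) again, $\Ext^1_\A(E_n,\De_{n-1})$ is finitely generated over $\Ga_{n-1}^\op$, giving $0\to\De_{n-1}^{m_{n-1}}\to E_{n-1}\to E_n\to 0$ with $\Ext^1_\A(E_{n-1},\De_{n-1})=0$; note also that the exact sequence and ($\De$2) give $\Ext^1_\A(E_{n-1},\De_n)=0$ still, since $\Ext^1_\A(\De_{n-1},\De_n)=0$ and $\Ext^2$ does not intervene for a one-step extension. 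Continuing down to $\De_{t+1}$ produces an object $P_t$ with a filtration whose top quotient is $\De_t$ and whose other subquotients are among $\De_{t+1},\ldots,\De_n$, and which satisfies $\Ext^1_\A(P_t,\De_j)=0$ for all $j\ge t+1$; set $U_t$ to be the kernel of $P_t\twoheadrightarrow\De_t$, so $U_t\in\Filt(\De_{t+1},\ldots,\De_n)$ by construction.

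Next I would verify that each $P_t$ so constructed is projective in $\Filt(\De_1,\ldots,\De_n)$. By the dual of a standard dévissage, it suffices to show $\Ext^1_\A(P_t,\De_j)=0$ for all $j$ with $1\le j\le n$. For $j\ge t+1$ this holds by construction. For $j\le t$: apply $\Hom_\A(-,\De_j)$ to $0\to U_t\to P_t\to\De_t\to 0$; we get an exact piece $\Ext^1_\A(\De_t,\De_j)\to\Ext^1_\A(P_t,\De_j)\to\Ext^1_\A(U_t,\De_j)$. The left term vanishes by ($\De$2) since $j\le t$. For the right term, $U_t\in\Filt(\De_{t+1},\ldots,\De_n)$ and each composition factor $\De_s$ with $s\ge t+1>j$ has $\Ext^1_\A(\De_s,\De_j)=0$ by ($\De$2) (as $s\ge j$); a filtration argument — applying $\Hom_\A(-,\De_j)$ to the steps of the filtration of $U_t$ and using that both the relevant $\Hom$ and $\Ext^1$ vanish — gives $\Ext^1_\A(U_t,\De_j)=0$. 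Actually one must be a little careful: the vanishing of $\Ext^1_\A(\De_s,\De_j)$ alone does not immediately propagate through a filtration unless one also controls $\Hom$; here ($\De$1) gives $\Hom_\A(\De_s,\De_j)=0$ for $s>j$, which is exactly what is needed to make the long exact sequences collapse. Hence $\Ext^1_\A(P_t,\De_j)=0$ for every $j$, and since $\Filt(\De_1,\ldots,\De_n)$ is closed under the relevant extensions, $P_t$ is projective there.

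Finally I would check that $\bigoplus_{t=1}^n P_t$ is a \emph{generator} of $\Filt(\De_1,\ldots,\De_n)$, i.e.\ that every object $X$ in this category admits an exact sequence $0\to N\to(\bigoplus_t P_t)^r\to X\to 0$ with $N$ again in the category. This is proved by induction on the filtration length of $X$: if $0\to Y\to X\to\De_t\to 0$ is the top step of a filtration of $X$, lift the surjection $P_t\twoheadrightarrow\De_t$ along $X\twoheadrightarrow\De_t$ — possible because $P_t$ is projective in $\Filt(\De_1,\ldots,\De_n)$ — and combine with an inductively obtained surjection from a sum of $P_s$'s onto $Y$; the kernel lies in the category because it is an extension of a subobject of $P_t$ (which is filtered by standard objects since $U_t$ is) by the kernel from the inductive step. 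The main obstacle is the bookkeeping in the iterated extension construction of $P_t$: one must ensure that enlarging $E_s$ to kill $\Ext^1(-,\De_{s-1})$ does not reintroduce nonzero $\Ext^1(-,\De_{s'})$ for $s'>s-1$ already annihilated, and this is precisely where ($\De$2) (vanishing of $\Ext^1$ among standard objects in the "upper triangular" range) is used, together with the finite-generation hypothesis ($\De$3) to guarantee the process terminates after finitely many steps with a genuine object of $\A$ rather than an infinite extension.
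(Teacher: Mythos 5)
The construction of $P_t$ has a genuine gap: you iterate the universal extensions in the wrong order. Starting from $\De_t$, you first kill $\Ext^1_\A(-,\De_n)$, then $\Ext^1_\A(-,\De_{n-1})$, and so on \emph{downwards} to $\De_{t+1}$, and to justify that a later step does not spoil the vanishing already achieved you assert ``$\Ext^1_\A(\De_{n-1},\De_n)=0$'' and appeal to ($\De$2). But ($\De$2) gives $\Ext^1_\A(\De_i,\De_j)=0$ only for $i\ge j$; it says nothing when $i<j$. In the exact piece
\[
0=\Ext^1_\A(E_s,\De_{s'})\lto\Ext^1_\A(E_{s-1},\De_{s'})\lto\Ext^1_\A(\De_{s-1},\De_{s'})^{m_{s-1}}
\]
with $s'\ge s$, the right-hand group is precisely one that ($\De$2) does not control, and it can be nonzero. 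For a concrete failure take the path algebra of the quiver $1\to 2\to 3$ over a field with $\De_i$ the simple at vertex $i$: then ($\De$1)--($\De$3) hold, $\Ext^1_\A(\De_1,\De_3)=0$ so your first step is trivial, your second step produces the uniserial module $E_2$ of length two with top $\De_1$ and socle $\De_2$, and yet $\Ext^1_\A(E_2,\De_3)\cong k\ne 0$, so the resulting $P_1=E_2$ is not projective in $\Filt(\De_1,\De_2,\De_3)$; the true $P_1$ has three composition factors.

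The fix is to iterate in \emph{ascending} order $s=t+1,\dots,n$: once $\Ext^1_\A(E_s,\De_j)=0$ for $t+1\le j\le s$, the universal extension $0\to\De_{s+1}^{m_{s+1}}\to E_{s+1}\to E_s\to 0$ preserves these vanishings because the obstruction term $\Ext^1_\A(\De_{s+1},\De_j)$ now has $s+1\ge j$, which is exactly where ($\De$2) applies. Equivalently---and closer to what the paper does in the $(2)\Rightarrow(1)$ direction of Theorem~\ref{th:khwt} and in the cited reference \cite{Kr2015}---one can induct on $n$: build $\bar P_1,\dots,\bar P_{n-1}$ for $\Filt(\De_1,\dots,\De_{n-1})$, form for each $i<n$ a single universal extension $0\to\De_n^{r_i}\to P_i\to\bar P_i\to 0$ (with $r_i$ finite by ($\De$3)), and set $P_n=\De_n$. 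The remainder of your argument (the d\'evissage reducing projectivity to $\Ext^1(P_t,\De_j)=0$ for each $j$, the vanishing of $\Ext^1(U_t,\De_j)$ for $j\le t$, and the induction on filtration length proving generation) is correct once $P_t$ is built in this order; your caveat that ($\De$1) is needed to make the d\'evissage collapse is actually unnecessary, since the three-term exact piece $\Ext^1(U'',\De_j)\to\Ext^1(U,\De_j)\to\Ext^1(U',\De_j)$ already gives the vanishing without invoking the $\Hom$ terms.
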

\begin{proof}
See \cite[Lemma~5.8]{Kr2015}.
\end{proof}

The following result characterises the standard objects of a
$k$-linear highest weight category and is an analogue of a result of
Dlab and Ringel \cite[Theorem~2]{DR1992}. This gives rise to an
alternative definition of highest weight categories which does not
involve the choice of exact sequences.

\begin{thm}\label{th:standard}
  Let $\A$ be a $k$-linear exact category and assume that
  $\Ext^1_\A(X,Y)$ is finitely generated over $k$ for all
  $X,Y\in\A$. Then a sequence of objects $\De_1,\ldots,\De_n$ in $\A$
  identifies with the standard objects $\De'_1,\ldots,\De'_n$ of a
  $k$-linear highest weight category $\A'$ via an exact equivalence
\[\A\supseteq \Filt(\De_{1},\ldots,\De_n)\xto{\sim}\Filt(\De'_{1},\ldots,\De'_n)\subseteq\A'\]
if and only if the following holds:
\begin{enumerate}
\item $\End_\A(\De_i)\cong k$ for all $i$.
\item $\Hom_\A(\De_i,\De_j)=0$ for all $i> j$. 
\item $\Ext^1_\A(\De_i,\De_j)=0$ for all $i\ge  j$. 
\item The endomorphism ring of a projective generator of
  $\Filt(\De_{1},\ldots,\De_n)$ is finitely generated projective over
  $k$.
\end{enumerate}
\end{thm}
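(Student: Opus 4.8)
The plan is to prove the two directions separately, with the ``only if'' direction being essentially bookkeeping and the ``if'' direction carrying the real content. For the ``only if'' direction, suppose $\De_1,\ldots,\De_n$ arise as standard objects of a $k$-linear highest weight category $\A'$ via an exact equivalence on $\Filt$-subcategories. Conditions (1), (2) are immediate since an exact equivalence preserves $\Hom$ and $\End$, and by Definition~\ref{de:khwt}(1)--(2) the standard objects $\De'_i$ satisfy these. Condition (3) follows from the remark after Definition~\ref{de:khwt} that the defining sequences \eqref{eq:khwt} force $\Ext^1_{\A'}(\De'_i,\De'_j)=0$ for $i\ge j$, together with the fact that an exact equivalence induces isomorphisms on $\Ext^1$. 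For (4), the defining sequences exhibit $\bigoplus_i P'_i$ as a projective generator of $\A'$, and one checks (via Lemma~\ref{le:standard-coloc}, or directly) that the $P'_i$ lie in $\Filt(\De'_1,\ldots,\De'_n)$ and form a projective generator there; its endomorphism ring is $\La'$, which is finitely generated projective over $k$ by hypothesis on $\A'$, and this transports across the equivalence.

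For the ``if'' direction, assume (1)--(4) for $\De_1,\ldots,\De_n$ in $\A$. The key observation is that (1)--(3) are exactly conditions ($\De$1)--($\De$2) plus the normalisation $\End_\A(\De_i)\cong k$, and moreover the hypothesis that $\Ext^1_\A(X,Y)$ is finitely generated over $k$ for all $X,Y$, combined with $\End_\A(\De_j)\cong k$, gives condition ($\De$3): $\Ext^1_\A(X,\De_j)$ is finitely generated over $\End_\A(\De_j)^\op\cong k$. So I may apply Lemma~\ref{le:standard} to obtain exact sequences $0\to U_t\to P_t\to\De_t\to 0$ with $U_t\in\Filt(\De_{t+1},\ldots,\De_n)$ and $\bigoplus_t P_t$ a projective generator of $\B:=\Filt(\De_1,\ldots,\De_n)$. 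Now set $\La=\End_\A(\bigoplus_t P_t)$; by hypothesis (4) this is finitely generated projective over $k$. The plan is then to show $\B$ is itself a $k$-linear highest weight category with standard objects $\De_1,\ldots,\De_n$: one needs an exact equivalence $\B\xrightarrow{\sim}\mod(\La,k)$, after which conditions (1)--(4) of Definition~\ref{de:khwt} are precisely what we have assembled.

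The equivalence $\B\xrightarrow{\sim}\mod(\La,k)$ is the technical heart. Since $P=\bigoplus_t P_t$ is a projective generator of $\B$ with $\End_\B(P)=\La$, Lemma~\ref{le:exact} gives $\widehat{\B}\xrightarrow{\sim}\Mod\La$ via evaluation at $P$, i.e.\ via $\Hom_\B(P,-)$. It remains to see that this restricts to an equivalence between $\B$ and $\mod(\La,k)$; equivalently, that $\Hom_\B(P,X)$ is finitely generated projective over $k$ for each $X\in\B$, and that $\Hom_\B(P,-)$ is essentially surjective onto $\mod(\La,k)$. The first point follows by dévissage along a $\De$-filtration of $X$: using the colocalisation sequence \eqref{eq:standard-coloc} of Lemma~\ref{le:standard-coloc} one reduces to the case $X=\De_t$, where $\Hom_\B(P,\De_t)=\Hom_\B(P_t,\De_t)$ up to the contributions $\Hom_\B(P_i,\De_t)$ for $i\ne t$; the relevant $\Hom$'s are built from $\Ext^1_\A(\De_i,\De_j)$ and $\End_\A(\De_j)$, all finitely generated projective over $k$ by hypothesis and (1). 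Essential surjectivity then follows because $\La$ itself is finitely generated projective over $k$, so every object of $\mod(\La,k)$ is a summand of a cokernel of a map between finite sums of copies of $\La=\Hom_\B(P,P)$, and such cokernels are realised in $\B$ since $\B$ is closed under the relevant extensions and cokernels of admissible epics. Finally, with the equivalence in hand, conditions (1)--(3) of Definition~\ref{de:khwt} hold by assumption, condition (4) holds by construction of $P$, and the sequences $0\to U_t\to P_t\to\De_t\to 0$ supply \eqref{eq:khwt}; so $\A'=\B$ works, with the identity as the required exact equivalence. I expect the main obstacle to be verifying that $\Hom_\B(P,-)$ lands in and surjects onto $\mod(\La,k)$ rather than just $\Mod\La$ — i.e.\ controlling finite generation and projectivity over $k$ uniformly along $\De$-filtrations — since everything else is a direct reading-off from the earlier lemmas.
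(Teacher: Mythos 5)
The ``only if'' direction and the opening of your ``if'' direction are fine and match the paper: Lemma~\ref{le:standard} gives the projective generator $P=\bigoplus_t P_t$ of $\Filt(\De_1,\ldots,\De_n)$, and the d\'evissage along the colocalisation sequence \eqref{eq:standard-coloc} is exactly the right way to prove that $\Hom_\A(P,X)$ is finitely generated projective over $k$ for every $X\in\Filt(\De_1,\ldots,\De_n)$.

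The gap is in your last step. You set $\B=\Filt(\De_1,\ldots,\De_n)$ and aim to prove that $\B$ itself is the highest weight category, which forces you to claim $\Hom_\B(P,-)\colon\B\to\mod(\La,k)$ is an equivalence. That is false in general: $\Filt(\De_1,\ldots,\De_n)$ is typically a \emph{proper} subcategory of $\mod(\La,k)$ (for $n\ge 2$ there are always $\La$-modules with no $\De$-filtration, and already for $n=1$ one gets $\free k$ rather than $\proj k$). Your specific justification — ``such cokernels are realised in $\B$ since $\B$ is closed under the relevant extensions and cokernels of admissible epics'' — does not hold: $\Filt(\De)$ is closed under extensions, but it is not closed under cokernels of admissible monos, kernels of admissible epis, or direct summands. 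The fix is to notice that the theorem only asks for an equivalence onto the subcategory $\Filt(\De'_1,\ldots,\De'_n)\subseteq\A'$, not onto $\A'$. Take $\A'=\mod(\La,k)$ and $\De'_i=\Hom_\A(P,\De_i)$; since $\Hom_\A(P,-)\colon\Filt(\De)\to\mod(\La,k)$ is fully faithful, exact, and an $\Ext^1$-isomorphism (via the Yoneda embedding and Lemma~\ref{le:exact}), it transports the sequences $0\to U_i\to P_i\to\De_i\to 0$ to $\mod(\La,k)$ and sends $\bigoplus_iP_i$ to $\La$, which \emph{is} a projective generator of all of $\mod(\La,k)$. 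That verifies Definition~\ref{de:khwt} for $\A'$, and the restriction of $\Hom_\A(P,-)$ to $\Filt(\De)\to\Filt(\De')$ is the required equivalence, with no essential surjectivity onto $\mod(\La,k)$ ever being needed or true.
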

\begin{proof}
  Clearly, the standard objects of a $k$-linear highest weight
  category satisfy the above properties. In order to show the converse
  choose any projective generator $P$ of $\Filt(\De_1,\ldots,\De_n)$
  which exists by Lemma~\ref{le:standard}.  We claim that
  $\Hom_\A(P,X)$ is finitely generated projective over $k$ for all $X$
  in $\Filt(\De_1,\ldots,\De_n)$.  Then the assertion of the theorem
  follows because we can choose $\A'=\mod(\La,k)$ for
  $\La=\End_\A(P)$, thanks to Lemma~\ref{le:standard}.

  The claim is shown by induction on $n$. We use the colocalisation
  sequence \eqref{eq:standard-coloc} for $i=n$. Given $X$ in
  $\Filt(\De_1,\ldots,\De_{n})$ set $X'=i_*i^*(X)$ and
  $X''=j_!j^!(X)$. Note that $P'$ is a projective generator of
  $\Filt(\De_1,\ldots,\De_{n-1})$.  The claim follows from the exact sequence
  \[0\to\Hom_\A(P,X'')\to\Hom_\A(P,X)\to\Hom_\A(P,X')\to 0.\]
  The $k$-module $\Hom_\A(P,X')\cong \Hom_\A(P',X')$ is finitely
  generated projective by induction, and (4) implies that
  $\Hom_\A(P,X'')$ is finitely generated projective.
\end{proof}

\subsection*{Recollements}

The following result characterises $k$-linear highest weight
categories in terms of recollements; it is the analogue of a result
for abelian length categories \cite{Kr2015,PS1988}.  We need to
involve the completion $\widehat\A$ of an exact category $\A$, because
there is in general no reason for the existence of recollements on the
level of subcategories of $\A$.

\begin{thm}\label{th:khwt}
  Let $\A$ be a $k$-linear exact category.  Suppose that
  $\A\xto{\sim}\mod(\La,k)$ for some $k$-algebra $\La$ that is
  finitely generated projective over $k$. Then the following are
  equivalent:
\begin{enumerate}
\item The category $\A$ is a $k$-linear highest weight category.
\item There is a finite chain of full exact
  subcategories\[0=\A_0\subseteq \A_1\subseteq
  \ldots\subseteq\A_n=\A\]
  such that each inclusion $\A_{i-1}\to\A_i$ induces a homological
  recollement of abelian categories
\[
\begin{tikzcd}
  \widehat{\A_{i-1}} \arrow[tail]{rr}&&\widehat{\A_i}
  \arrow[twoheadrightarrow,yshift=-1.5ex]{ll}
  \arrow[twoheadrightarrow,yshift=1.5ex]{ll}
  \arrow[twoheadrightarrow]{rr} &&\Mod k
  \arrow[tail,yshift=-1.5ex]{ll}
  \arrow[tail,yshift=1.5ex]{ll}
\end{tikzcd}
\quad\text{with}\quad \A_{i-1}= \widehat{\A_{i-1}} \cap\A_i.
\]
\end{enumerate}
\end{thm}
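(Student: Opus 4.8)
The plan is to establish both implications by inducting on $n$ and translating between exact sequences of standard objects and recollement data via the completion functor $\A\mapsto\widehat\A$.

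For $(1)\Rightarrow(2)$, suppose $\A$ is a $k$-linear highest weight category with standard objects $\De_1,\dots,\De_n$ and defining sequences \eqref{eq:khwt}. First I would set $\A_i=\Filt(\De_1,\dots,\De_i)$ and check that each $\A_i$ is a full exact subcategory closed under extensions, using ($\De 2$) together with Lemma~\ref{le:standard}, which provides a projective generator $P^{(i)}=\bigoplus_{t\le i}P_t$ of $\A_i$ whose $\Hom_\A(P^{(i)},-)$-images are finitely generated projective over $k$ (this last point follows by the induction argument already carried out in the proof of Theorem~\ref{th:standard}). Thus $\widehat{\A_i}\simeq\Mod\La_i$ with $\La_i=\End_\A(P^{(i)})$. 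Next I would produce the recollement: the functor $j^!=\Hom_\A(\De_i,-)$ extended to $\widehat{\A_i}$ is the required quotient functor onto $\Mod k$ (using $\End_\A(\De_i)\cong k$ from (1)), with $j_!$ sending $k$ to $\De_i$ and $j_*$ its right adjoint; the kernel of $j^!$ identifies with $\widehat{\A_{i-1}}$, giving $i_*$, and the left/right adjoints $i^*,i^!$ come from general recollement machinery for module categories over the idempotent quotient. The homological condition — the $\Ext$-isomorphisms $\Ext^p_{\widehat{\A_{i-1}}}(X,Y)\xto{\sim}\Ext^p_{\widehat{\A_i}}(i_*X,i_*Y)$ — is where the heart of the argument lies; I would deduce it from condition ($\De 2$) (vanishing of $\Ext^1$ among the $\De_t$ with $t\ge$ index), which forces the quotient to be a \emph{stratifying} rather than merely recollement-type ideal, plus the colocalisation sequence of Lemma~\ref{le:standard-coloc} to run the dimension shift. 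Finally $\A_{i-1}=\widehat{\A_{i-1}}\cap\A_i$ holds because an object of $\A_i$ with $\Hom_\A(\De_i,-)=0$ has no $\De_i$-composition factor, hence lies in $\Filt(\De_1,\dots,\De_{i-1})$ by the filtration argument of Lemma~\ref{le:standard-coloc}.

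For $(2)\Rightarrow(1)$, given the chain $0=\A_0\subseteq\cdots\subseteq\A_n=\A$ with homological recollements, I would recover the standard objects by setting $\De_i=j_!(k)\in\widehat{\A_i}$, where $j_!$ is the left adjoint in the $i$-th recollement, and first check $\De_i$ actually lies in $\A_i$ (not just $\widehat{\A_i}$): this uses the hypothesis $\A\simeq\mod(\La,k)$ together with $\A_{i-1}=\widehat{\A_{i-1}}\cap\A_i$ and the exact sequence $j_!j^!(P)\to P\to i_*i^*(P)\to 0$ applied to a projective generator $P$. Then $\End(\De_i)\cong\End_{\Mod k}(k)\cong k$ since $j_!$ is fully faithful, giving condition~(1) of Definition~\ref{de:khwt}; condition~(2), $\Hom(\De_i,\De_j)=0$ for $i>j$, follows because for $i>j$ the object $\De_j$ lies in $\widehat{\A_{i-1}}=\ker(j^!\colon\widehat{\A_i}\to\Mod k)$ while $\De_i=j_!(k)$ and $\Hom(j_!k,i_*?)=\Hom(k,j^!i_*?)=0$. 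For the exact sequences \eqref{eq:khwt}: take the projective generator $P_i$ of $\widehat{\A_i}$ lying over $k$ — concretely, the image under $j_!$ of a projective generator of $\Mod k$, i.e.\ $P_i=j_!j^!(\text{projective cover of }\De_i)$ — so that $P_i\twoheadrightarrow\De_i$ with kernel $U_i$; the homological recollement condition is exactly what guarantees $U_i\in\Filt(\De_{i+1},\dots,\De_n)$, because it makes $\widehat{\A_{i-1}}\hookrightarrow\widehat{\A_i}$ an $\Ext$-full embedding and hence forces the kernel to be filtered by standard objects of higher index (here one iterates down the chain $\A_i\subseteq\A_{i+1}\subseteq\cdots$). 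Condition~(4), that $\bigoplus P_i$ is a projective generator of $\A$, follows by assembling the $P_i$ across the chain and using that each recollement quotient is $\Mod k$.

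The main obstacle I anticipate is the careful bookkeeping of the homological condition — specifically, showing in $(1)\Rightarrow(2)$ that the recollements are \emph{homological} and, conversely in $(2)\Rightarrow(1)$, extracting from that homological property the precise filtration statement $U_i\in\Filt(\De_{i+1},\dots,\De_n)$. Both directions hinge on the interplay between $\Ext$-computations in the small exact category $\A$ (where the finiteness hypothesis ``$\Ext^1_\A(X,Y)$ finitely generated over $k$'' and the identification $\Ext^1_\A\xto{\sim}\Ext^1_{\widehat\A}$ from the Keller construction are used) and the recollement sequences; keeping track of which $\Ext$-groups vanish and why, as one passes up and down the chain, will be the delicate part, and I would handle it by a clean induction mirroring the one in the proof of Theorem~\ref{th:standard}.
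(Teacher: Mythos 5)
Your broad strategy — induction on $n$, translating between defining sequences and recollement data, invoking Lemma~\ref{le:standard-coloc} for exactness — is the same as the paper's, but there are two concrete gaps.

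In your $(1)\Rightarrow(2)$ you set $\A_i=\Filt(\De_1,\ldots,\De_i)$. This cannot be the chain the theorem requires because it forces $\A_n=\Filt(\De_1,\ldots,\De_n)$, which is a proper subcategory of $\A$ whenever $\A$ contains modules without a $\De$-filtration (i.e.\ almost always). The paper instead takes
\[
\A_{i-1}=\{\,X\in\A_i\mid\Hom_{\A_i}(\De_i,X)=0\,\},\qquad\A_n=\A,
\]
so that $\A_n=\A$ holds by fiat and $\A_{i-1}=\widehat{\A_{i-1}}\cap\A_i$ is true by construction. A projective generator of this $\A_{i-1}$ is obtained by applying the left adjoint $i^*$ of the colocalisation sequence of Lemma~\ref{le:standard-coloc} to the projective generator of $\A_i$; the rest of your identification $\widehat{\A_i}\simeq\Mod\La_i$ then goes through. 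The homological condition is checked not by a dimension shift from ($\De 2$) alone but by observing, via Lemma~\ref{le:standard-coloc}, that $j_!j^!(X)\to X$ is a monomorphism for every projective $X$ and quoting the criterion \cite[Proposition~A.1]{Kr2015}; your sketch of this point is in the right spirit but would need that precise reformulation.

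In your $(2)\Rightarrow(1)$ the formula $P_i=j_!j^!(\cdot)$ cannot be correct: $j_!$ applied to a projective generator of $\Mod k$ gives a direct sum of copies of $\De_i$, so your $P_i\twoheadrightarrow\De_i$ would have kernel zero and you never produce a non-trivial $U_i$ filtered by higher standard objects. The missing idea is the universal extension. Start from the exact sequences $0\to\bar U_i\to\bar P_i\to\De_i\to 0$ supplied by the induction hypothesis for $\A_{n-1}$; using that $\Ext^1_\A(\bar P_i,\De_n)$ is finitely generated over $k$, form the universal extension $0\to\De_n^r\to P_i\to\bar P_i\to 0$ (which shows $P_i$ is projective in $\A$), and then take the pull-back of $\bar P_i\to\De_i$ along $P_i\to\bar P_i$ to obtain $0\to U_i\to P_i\to\De_i\to 0$ with $U_i$ an extension of $\bar U_i\in\Filt(\De_{i+1},\ldots,\De_{n-1})$ by $\De_n^r$, hence in $\Filt(\De_{i+1},\ldots,\De_n)$. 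This is what yields condition (3) of Definition~\ref{de:khwt}; attributing it to the homological condition, as you do, misidentifies the mechanism.
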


The proof of Theorem~\ref{th:khwt} provides for each $1\le i\le n$ a
$k$-algebra $\La_i$ such that $\A_i\xto{\sim}\mod(\La_i,k)$ and a
surjective algebra homomorphism $\La_{i}\to\La_{i-1}$.
 
\begin{proof}
  Fix a projective generator $P$ of $\A$ and set $\La=\End_\A(P)$. We
  identify $\A\xto{\sim}\mod(\La,k)$ via $\Hom_\A(P,-)$ and
  $\widehat\A\xto{\sim}\Mod\La$ via evaluation at $P$; see
  Lemma~\ref{le:exact}.

  (1) $\Rightarrow$ (2): Suppose that $\A$ is a $k$-linear highest
  weight category satisfying the conditions in
  Definition~\ref{de:khwt}, and we may assume $P=\bigoplus_iP_i$. We
  give a recursive construction of a chain
 \[0=\A_0\subseteq \A_1\subseteq \ldots\subseteq\A_n=\A\]
 of full subcategories satisfying the conditions in (2). To this end
 consider the colocalisation sequence \eqref{eq:standard-coloc} for
 $i=n$. The left adjoint
 \[\Filt(\De_1,\ldots,\De_{n})\lto \Filt(\De_1,\ldots,\De_{n-1})\] takes
 the object $P$ to a projective generator $\bar P$ of
 $\Filt(\De_1,\ldots,\De_{n-1})$. We set
 \[\A_{n-1}=\{X\in\A\mid\Hom_\A(\De_n,X)=0\}
 \qquad\text{and}\qquad \La_{n-1}=\End_\A(\bar P).\]
 Note that $\Hom_\A(\bar P,X)\cong\Hom_\A(P,X)$ is finitely generated
 projective over $k$ for all $X$ in $\A_{n-1}$. Also, it is easily
 checked that $\bar P$ is a projective generator of $\A_{n-1}$. Thus
 $\Hom_\A(\bar P,-)$ yields an equivalence
 $\A_{n-1}\xto{\sim}\mod(\La_{n-1},k)$. It follows from
 Theorem~\ref{th:standard} that $\A_{n-1}$ is a highest weight
 category with standard objects $\De_1,\ldots,\De_{n-1}$. We have
 $\widehat{\A_{n-1}}\xto{\sim}\Mod\La_{n-1}$ by Lemma~\ref{le:exact},
 and the functor $\Hom_\La(\De_n,-)\colon\Mod\La\to \Mod k$ induces
 the following
 recollement.
\begin{equation}\label{eq:khwt-rec}
\begin{tikzcd}
  \Mod\La_{n-1} \arrow[tail]{rr}&&\Mod\La
  \arrow[twoheadrightarrow,yshift=-1.5ex]{ll}
  \arrow[twoheadrightarrow,yshift=1.5ex]{ll}
  \arrow[twoheadrightarrow]{rr} &&\Mod k
  \arrow[tail,yshift=-1.5ex]{ll}
  \arrow[tail,yshift=1.5ex]{ll}
\end{tikzcd}
\end{equation}
This recollement is homological by \cite[Proposition~A.1]{Kr2015},
because for each projective object $X$ in $\Mod\La$ the counit
$ j_!j^!(X)\to X$ is a monomorphism by Lemma~\ref{le:standard-coloc}.

(2) $\Rightarrow$ (1): Fix a chain of full subcategories
$\A_i\subseteq\A$ satisfying the conditions in (2).  We show by
induction on $n$ that $\A$ is a highest weight category.  Let $\De_n$
denote the image of $k$ under the left adjoint $\Mod k\to\Mod\La$.
Clearly, $\End_\La(\De_n)\cong k$ and $\De_n$ is a finitely generated
projective $\La$-module that belongs therefore to $\A$.  The inclusion
$\widehat{\A_{n-1}}\to\Mod\La$ identifies $\widehat{\A_{n-1}}$ with
$\Mod\La/\fra$ for some idempotent ideal $\fra$; see
\cite[Proposition~7.1]{Au1974}. More precisely, the left adjoint of
$\widehat{\A_{n-1}}\to\Mod\La$ sends $\La$ to $\La/\fra$ which is a
projective generator of $\A_{n-1}$ by Lemma~\ref{le:exact}. In
particular, $\La_{n-1}=\La/\fra$ is finitely generated projective over
$k$ and $\A_{n-1}=\mod(\La_{n-1},k)$.  The induction hypothesis for
$\A_{n-1}$ yields a collection of exact sequences
\[0\lto \bar U_i\lto \bar P_i\lto \De_i \lto 0\qquad (1\le i < n)\]
and we modify them as follows to obtain exact sequences
\eqref{eq:khwt}. Using the fact that extension groups of objects in
$\A$ are finitely generated over $k$, we can form the universal
extension
\[0\lto \De_n^r\lto P_i\lto\bar P_i\lto 0\]
in $\A$; that is, the induced map
$\Hom_\A(\De_n^r,\De_n)\to\Ext_\A^1(\bar P_i,\De_n)$ is surjective. A
standard argument (as in the proof of Lemma~\ref{le:standard}) shows
that $P_i$ is a projective object. Forming the pull-back diagram
\begin{equation*}
\begin{tikzcd}
{}&0\arrow{d}&0\arrow{d}\\
{}&\De_n^r\arrow{d}\arrow[equal]{r}&\De_n^r\arrow{d}\\
0\arrow{r}&U_i\arrow{d}\arrow{r}&P_i\arrow{d}\arrow{r}&\De_i\arrow[equal]{d}\arrow{r}&0\\
0\arrow{r}&\bar U_i\arrow{d}\arrow{r}&\bar P_i\arrow{d}\arrow{r}&\De_i\arrow{r}&0\\
{}&0&0
\end{tikzcd}
\end{equation*}
we get exact sequences
\eqref{eq:khwt} with $U_i$ in $\Filt(\De_{i+1},\ldots,\De_n)$, where
$P_n:=\De_n$ and $U_n:=0$. It remains to observe that
$\bigoplus_i P_i$ is a projective generator of $\A$.
\end{proof}

\subsection*{Properties of $k$-linear highest weight categories}

We formulate some consequences of Theorem~\ref{th:khwt}.  To this end
fix a $k$-linear highest weight category $\A=\mod(\La,k)$ with chain
of subcategories
\[0=\A_0\subseteq\A_1\subseteq\ldots\subseteq\A_n=\A\qquad\text{and}\qquad
\A_i=\mod(\La_i,k).
\]

For each $1\le i\le n$ we identify $\End_\A(\De_i)=k$. Then the
functor
\[\A_i\lto\proj k,\quad X\mapsto \Hom_{\La_i}(\De_i,X)\cong 
X\otimes_{\La_i}\Hom_{\La_i}(\De_i,\La_i)\]
admits the left adjoint $-\otimes_k\De_i$ and the right adjoint
$\Hom_k(\Hom_{\La_i}(\De_i,\La_i),-)$.  This yields the following
diagram of exact functors
\begin{equation}\label{eq:rec-khwt}
\begin{tikzcd}
\A_{i-1} \arrow[tail]{rr}&&\A_i
  \arrow[twoheadrightarrow]{rr} &&\proj k
  \arrow[tail,yshift=-1.5ex]{ll}
  \arrow[tail,yshift=1.5ex]{ll}
\end{tikzcd}
\end{equation}
which one may think of as an incomplete recollement.

The standard object $\De_i$ equals the image of $k$ under the
left adjoint $\proj k\to\A_i$ while the \emph{costandard object}
$\nabla_i$ is by definition the image of $k$ under the right adjoint
$\proj k\to\A_i$. In particular we have
\begin{equation*}
\Hom_\A(\nabla_j,\nabla_i)=0\quad\text{($i> j$)}\qquad\text{and}\qquad
\Ext^1_\A(\nabla_j,\nabla_i)=0\quad\text{($i\ge j$)}.
\end{equation*}

\begin{prop}
  Let $\A$ be a $k$-linear highest weight category. Then the category
  $\A^\op$ is a $k$-linear highest weight category.
\end{prop}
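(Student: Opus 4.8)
The plan is to transport the highest weight structure on $\A$ to $\A^\op$ by duality, using the costandard objects of $\A$ as the candidate standard objects of $\A^\op$. Since $\A\xto{\sim}\mod(\La,k)$ with $\La$ finitely generated projective over $k$, the duality $\Hom_k(-,k)$ gives an exact equivalence $\A^\op\xto{\sim}\mod(\La^\op,k)$, so the structural hypothesis of Definition~\ref{de:khwt} holds for $\A^\op$ with algebra $\La^\op$, which is again finitely generated projective over $k$. It therefore suffices to verify conditions (1)--(4) of Theorem~\ref{th:standard} for the sequence $\nabla_1,\ldots,\nabla_n$ viewed inside $\A^\op$ (with the same ordering), since that theorem produces a $k$-linear highest weight category $\A'$ realising these objects as standard objects, and a count of objects together with the finite generation hypothesis (extension groups over $\A$, hence over $\A^\op$, are finitely generated over $k$) forces $\Filt_{\A^\op}(\nabla_1,\ldots,\nabla_n)=\A^\op$, so $\A'\simeq\A^\op$.

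The verification of (1)--(3) is essentially a restatement of properties already recorded in the excerpt. Condition (1) for $\A^\op$ reads $\End_{\A^\op}(\nabla_i)\cong k$, which is $\End_\A(\nabla_i)\cong k$; this is immediate since $\nabla_i$ is the image of $k$ under the right adjoint $\proj k\to\A_i$ appearing in \eqref{eq:rec-khwt}, and that adjoint is fully faithful on $\proj k$ (or: apply $\Hom_k(-,k)$ to $\End_\A(\De_i)\cong k$ after noting $\nabla_i$ corresponds to $\De_i$ of $\A^\op$ — I will pick whichever phrasing is cleanest). Conditions (2) and (3) for $\A^\op$ say $\Hom_{\A^\op}(\nabla_i,\nabla_j)=0$ for $i>j$ and $\Ext^1_{\A^\op}(\nabla_i,\nabla_j)=0$ for $i\ge j$; reversing arrows these become $\Hom_\A(\nabla_j,\nabla_i)=0$ for $i>j$ and $\Ext^1_\A(\nabla_j,\nabla_i)=0$ for $i\ge j$, which are exactly the two displayed vanishing statements recorded immediately before the Proposition. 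Condition (4) requires that the endomorphism ring of a projective generator of $\Filt_{\A^\op}(\nabla_1,\ldots,\nabla_n)$ be finitely generated projective over $k$; once we know this $\Filt$ is all of $\A^\op$, a projective generator is $\Hom_k(I,k)$ for an injective cogenerator $I$ of $\A$ (equivalently the $\La^\op$-module $\La^\op$), whose endomorphism ring is $\La^\op$, finitely generated projective over $k$ by hypothesis.

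The one genuine point needing argument is that $\Filt_{\A^\op}(\nabla_1,\ldots,\nabla_n)=\A^\op$, equivalently that every object of $\A$ has a filtration by costandard objects — the dual of the statement that $\bigoplus_i P_i$ is a projective generator. I would deduce this from Theorem~\ref{th:khwt}: the chain $0=\A_0\subseteq\cdots\subseteq\A_n=\A$ and the incomplete recollements \eqref{eq:rec-khwt} dualise, with the left adjoint $\proj k\to\A_i$ (producing $\De_i$) and the right adjoint (producing $\nabla_i$) swapping roles under $\Hom_k(-,k)$. Thus the chain of opposite categories, with the costandard objects $\nabla_i$ now playing the role of standard objects, again satisfies (2) of Theorem~\ref{th:khwt} — the recollement condition is self-dual and $\widehat{\A^\op_{i-1}}=\widehat{\A_{i-1}}^{\,\op}$ — so by that theorem $\A^\op$ is a $k$-linear highest weight category, and unwinding the proof of Theorem~\ref{th:khwt} identifies its standard objects as the $\nabla_i$. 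This route actually subsumes everything: once the recollement chain dualises, Theorem~\ref{th:khwt}(2)$\Rightarrow$(1) gives the result directly, and I would present the proof primarily in that form, mentioning the Theorem~\ref{th:standard} route only as an alternative. The main obstacle is purely bookkeeping: checking that the four recollement axioms and the homological condition are preserved under $\Hom_k(-,k)$, and that the intersection condition $\A_{i-1}=\widehat{\A_{i-1}}\cap\A_i$ dualises correctly; no new idea is needed.
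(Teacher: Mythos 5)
Your primary route coincides with the paper's: set $\De'_i=\Hom_k(\nabla_i,k)$ in $\mod(\La^\op,k)$ and apply Theorem~\ref{th:standard}, with (1)--(3) following from the displayed $\Hom$ and $\Ext^1$ vanishings for costandard objects together with $\End_\A(\nabla_i)\cong k$. You also correctly sense that the real work is to identify the highest weight category delivered by Theorem~\ref{th:standard} with $\A^\op$ itself rather than some smaller one. But the assertion you propose for this, $\Filt_{\A^\op}(\nabla_1,\ldots,\nabla_n)=\A^\op$, is false: extensions in $\A^\op$ are reversed extensions in $\A$, so this would say that every object of $\A$ has a costandard filtration, which already fails for a non-semisimple quasi-hereditary algebra over a field. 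What is actually required is much weaker and is true: that $\La^\op$ lies in $\Filt^\oplus(\De'_1,\ldots,\De'_n)$, equivalently that the injective cogenerator $\Hom_k(\La,k)$ of $\A$ has a $\nabla$-filtration. Given that, $\La^\op$ is a projective generator of $\Filt^\oplus(\De')$ whose endomorphism ring $\La^\op$ is finitely generated projective over $k$, Theorem~\ref{th:standard} applies, and the highest weight category it produces is $\mod(\La^\op,k)\simeq\A^\op$.

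The second error is the claim $\widehat{\A^\op_{i-1}}=\widehat{\A_{i-1}}^{\,\op}$. By Lemma~\ref{le:exact}, $\widehat{\A_i}\simeq\Mod\La_i$ whereas $\widehat{\A_i^\op}\simeq\Mod\La_i^\op$, and $\Mod\La_i^\op$ is a module category, not the opposite category $(\Mod\La_i)^\op$. So the recollement chain does not ``dualise'' simply by applying $\Hom_k(-,k)$ to the completions; one must instead construct the recollement $\Mod\La_{i-1}^\op\leftrightarrows\Mod\La_i^\op\leftrightarrows\Mod k$ from scratch, using the idempotent kernel of $\La_i\to\La_{i-1}$ and its split heredity structure, and check that the homological condition passes to the opposite algebra. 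That left--right symmetry is in effect the substance of the Proposition, so dismissing it as ``purely bookkeeping'' skips precisely the step that needs an argument. Either route can be completed, but as written both contain a gap: the first requires proving $\Hom_k(\La,k)\in\Filt^\oplus(\nabla)$ (for instance by induction along the recollement chain), the second requires actually carrying out the dualisation of the recollement.
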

\begin{proof}
  We identify $\A^\op=\mod(\La^\op,k)$ and use the duality
  $\Hom_k(-,k)$. Set $\De'_i=\Hom_k(\nabla_i,k)$ for $1\le i\le n$.
  Using Theorem~\ref{th:standard} it is easily checked that
  $\A^\op$ is a $k$-linear highest weight category with standard
  objects $\De'_1,\ldots,\De'_n$.
\end{proof}

We observe that the duality $\Hom_k(-,k)$ induces an equivalence
\begin{equation}\label{eq:duality}
\mod(\La,k)^\op\supseteq\Filt(\De_1,\ldots,\De_n)^\op
\xto{\sim}\Filt(\nabla_1,\ldots,\nabla_n)\subseteq\mod(\La^\op,k)
\end{equation}
which maps each $\De_i$ to $\nabla_i$.

For a ring $\La$ we write $\bfD^\perf(\La)=\bfD^b(\proj\La)$ for the
category of perfect complexes over $\La$.

\begin{prop}\label{pr:kgldim}
  Let $\A$ be a $k$-linear highest weight category and let $\La$
  denote the endomorphism ring of a projective generator. Then we have
  a triangle equivalence $\bfD^\perf(\La)\xto{\sim}\bfD^b(\A)$ and
  each inclusion $\A_{t-1}\to\A_t$ induces a recollement of
  triangulated categories
\[
\begin{tikzcd}
\bfD^b(\A_{t-1}) \arrow[tail]{rr}&&\bfD^b(\A_t)
  \arrow[twoheadrightarrow,yshift=-1.5ex]{ll}
  \arrow[twoheadrightarrow,yshift=1.5ex]{ll}
  \arrow[twoheadrightarrow]{rr} &&\bfD^\perf(k)\, .
  \arrow[tail,yshift=-1.5ex]{ll}
  \arrow[tail,yshift=1.5ex]{ll}
\end{tikzcd}
\]
\end{prop}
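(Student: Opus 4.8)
The proof has two independent parts: first, the existence of the triangle equivalence $\bfD^\perf(\La)\xto{\sim}\bfD^b(\A)$; second, the construction of the recollement of bounded derived categories from the recollement of abelian categories at the level of the completions $\widehat{\A_{t-1}}\subseteq\widehat{\A_t}$ given by Theorem~\ref{th:khwt}.

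For the equivalence, the point is that $\A=\mod(\La,k)$ with $\La$ finitely generated projective over $k$, so the objects of $\A$ are precisely the $\La$-modules that are finitely generated projective over $k$, and the exact structure is the one inherited from split exactness over $k$. First I would observe that every object of $\A$ has a finite resolution by objects in $\add(\La)=\proj\La$: indeed, since $\La$ is the endomorphism ring of the projective generator $P$ and $\Hom_\A(P,-)$ is exact and faithful, a standard syzygy argument together with the fact that $\La$ has finite global dimension over itself (split quasi-hereditary algebras are quasi-hereditary, hence of finite global dimension — this also follows a posteriori from the recollement filtration) shows every $X\in\A$ has $\pd_\La X<\infty$. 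Consequently the natural functor $\bfK^b(\proj\La)=\bfD^\perf(\La)\to\bfD^b(\mod\La)\to\bfD^b(\A)$ is fully faithful with dense image; fullness and faithfulness follow because $\Ext$-groups computed in $\A$ agree with those computed in $\Mod\La$ (the exact sequences of $\A$ are exactly the ones that stay exact after $\Hom_\A(P,-)$), and density follows from the finite projective resolutions. Alternatively, and more in the spirit of the paper, one can build the equivalence inductively along the chain $\A_{t-1}\subseteq\A_t$ using the recollement below, which simultaneously produces the triangulated recollement.

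For the recollement, I would lift the homological recollement of abelian categories
\[
\begin{tikzcd}
  \widehat{\A_{t-1}} \arrow[tail]{rr}&&\widehat{\A_t}
  \arrow[twoheadrightarrow,yshift=-1.5ex]{ll}
  \arrow[twoheadrightarrow,yshift=1.5ex]{ll}
  \arrow[twoheadrightarrow]{rr} &&\Mod k
  \arrow[tail,yshift=-1.5ex]{ll}
  \arrow[tail,yshift=1.5ex]{ll}
\end{tikzcd}
\]
of Theorem~\ref{th:khwt} to the bounded derived level. The six functors of the recollement restrict to functors between the categories $\proj$ of finitely generated projectives (since $i_*$ preserves finite projectives — it identifies $\La_{t-1}$ with $\La_t/\fra$ — and the adjoints $j^!=j^*$, $j_!$, $i^*$ of exact functors between module categories send finitely generated projectives to finitely generated projectives), hence induce exact functors between the bounded homotopy categories $\bfK^b(\proj)$, which are the categories $\bfD^\perf$, i.e. $\bfD^b(\A_{t-1})$, $\bfD^b(\A_t)$, and $\bfD^\perf(k)$ under the equivalence from the first part. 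The adjunctions persist after passing to $\bfK^b(\proj-)$, and the recollement axioms can be checked on the generating projective objects: fully-faithfulness of $i_*$ on the derived level is exactly the statement that the recollement is \emph{homological}, while the exact triangle $j_!j^!(X)\to X\to i_*i^*(X)\to$ for $X$ a projective object — which is the short exact sequence produced by Lemma~\ref{le:standard-coloc}, as invoked in the proof of Theorem~\ref{th:khwt} — propagates to all of $\bfD^b$ by taking cones and shifts.

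\textbf{The main obstacle.} The delicate point is verifying that $i_*\colon\bfD^b(\A_{t-1})\to\bfD^b(\A_t)$ is fully faithful and that the derived counit/unit triangle holds for arbitrary complexes, not just projectives. Full faithfulness reduces, via projective resolutions, to the isomorphisms $\Ext^p_{\widehat{\A_{t-1}}}(X,Y)\xto{\sim}\Ext^p_{\widehat{\A_t}}(i_*X,i_*Y)$ — precisely the homological condition on the recollement of abelian categories, which Theorem~\ref{th:khwt} guarantees. The exact triangle then follows because both $j_!j^!$ and $i_*i^*$ are the derived functors of exact functors (so they need no derivation), and one checks on projectives that $\mathrm{cone}(j_!j^!X\to X)\cong i_*i^*X$; this is the content of the short exact sequence in Lemma~\ref{le:standard-coloc}, and it extends to all of $\bfD^b$ since $\bfD^b(\A_t)$ is generated as a triangulated category by the projective objects. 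Once these are in place the remaining recollement axioms (adjunctions, the two remaining fully faithful functors $j_!$ and $j_*=j_!$ up to the obvious identifications over the base $k$, and the kernel condition $j^!X=0 \iff X\in\Im i_*$) are formal consequences of the corresponding abelian statements together with the fact that a complex is zero iff all its cohomology vanishes.
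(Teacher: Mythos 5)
Your ``main obstacle'' paragraph is essentially the paper's argument: full faithfulness of $i_*$ on $\bfD^b$ comes from the homological hypothesis, and the exact triangle is obtained by completing the counit $j_!j^!X\to X$ to a cone, checked on projectives via Lemma~\ref{le:standard-coloc} and propagated by generation. But the paragraph before it contains a genuine error. You claim that all six recollement functors restrict to functors between the $\proj$ categories, and in particular that ``$i_*$ preserves finite projectives --- it identifies $\La_{t-1}$ with $\La_t/\fra$.'' That is precisely why $i_*$ does \emph{not} preserve projectives: $\La_t/\fra$ is not a projective $\La_t$-module, and neither $j_*$ nor $i^!$ preserves projectives in general. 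So the proposed reduction to $\bfK^b(\proj)$ for all six functors fails at that step.

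What the paper does instead is observe that $i_*\La_{t-1}$ is nevertheless \emph{perfect} over $\La_t$: Lemma~\ref{le:standard-coloc} gives a short exact sequence $0\to\De_t^r\to\La_t\to\La_{t-1}\to 0$ with $\De_t$ finitely generated projective over $\La_t$, so $\La_{t-1}$ lies in $\bfD^\perf(\La_t)$ as a two-term complex even though it is not itself projective. That observation, fed into the induction, is exactly how the paper shows $\La_t$ generates $\bfD^b(\A_t)$ and hence obtains the equivalence $\bfD^\perf(\La_t)\xto{\sim}\bfD^b(\A_t)$. Note also that this equivalence is established \emph{using} the recollement, not prior to it: your first paragraph's appeal to finite global dimension of $\La$ is circular (that is the content of Remark~\ref{re:kgldim}, stated as a consequence of this very proposition), as you yourself flag; the inductive route you mention in passing is the one that actually works, and the recollement must be built first from \eqref{eq:rec-khwt} and the homological property before the generation argument can run.
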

\begin{proof}
  The diagram \eqref{eq:rec-khwt} induces the recollement of derived
  categories.  For the right half of the diagram, this is clear since
  all functors are exact. The inclusion $\A_{t-1}\to\A_t$ induces a
  fully faithful functor $i_*\colon \bfD^b(\A_{t-1})\to\bfD^b(\A_t)$
  because the recollement in Theorem~\ref{th:khwt} is homological. We
  obtain the left adjoint of $i_*$ by completing the counit
  $j_!j^!(X)\to X$ to an exact triangle in $\bfD^b(\A_t)$, and
  analogously the right adjoint by completing the unit
  $X\to j_*j^*(X)$.

  Using these recollements, an induction shows that each inclusion
  $\proj\La_t\to\mod(\La_t,k)=\A_t$ induces a triangle equivalence
  $\bfD^\perf(\La_t)\xto{\sim} \bfD^b(\A_t)$. The functor is exact and
  fully faithful; so we show by induction that $\La_t$ generates
  $\bfD^b(\A_t)$ as a triangulated category.

  Each object $X\in\bfD^b(\A_t)$ fits into an exact triangle
  $j_!j^!(X)\to X\to i_*i^*(X)\to$, and we claim that $j_!j^!(X)$ and
  $i_*i^*(X)$ belong to $\bfD^\perf(\La_t)$. This is clear for
  $j_!j^!(X)$, because it is generated by $\De_t$ which is a finitely
  generated projective $\La_t$-module. On the other hand, we have an
  exact triangle $\De_t^r\to \La_t\to\La_{t-1}\to$ for some $r\ge 0$
  by Lemma~\ref{le:standard-coloc}, and therefore $\La_{t-1}$ is in
  $\bfD^\perf(\La_t)$.  Thus $i_*i^*(X)$ belongs to
  $\bfD^\perf(\La_{t-1})\subseteq \bfD^\perf(\La_t)$ by the
  induction hypothesis.
\end{proof}

Let $\Filt^\oplus(\De_1,\ldots,\De_n)$ denote the idempotent
completion of $\Filt(\De_1,\ldots,\De_n)$. 

\begin{cor}\label{co:kgldim}
The sequence of inclusion functors
\[\proj\La\to \Filt^\oplus(\De_1,\ldots,\De_n)\to \A\]
induces  triangle equivalences
\[\bfD^\perf(\La) \xto{\sim}\bfD^b(\Filt^\oplus(\De_1,\ldots,\De_n))
\xto{\sim}\bfD^b(\A).\]
Analogously, the inclusion
$\Filt^\oplus(\nabla_1,\ldots,\nabla_n)\to \A$
induces  a triangle equivalence
\[\bfD^b(\Filt^\oplus(\nabla_1,\ldots,\nabla_n))
\xto{\sim}\bfD^b(\A).\]
\end{cor}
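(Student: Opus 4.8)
**The plan is to bootstrap from Proposition~\ref{pr:kgldim}, using the tilting-type characterization of bounded derived equivalences coming from a suitable resolving subcategory.**

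First I would recall the abstract mechanism: if $\B$ is a full exact subcategory of an exact category $\A$ which is closed under extensions and direct summands, and if every object of $\A$ admits a finite right (or left) resolution by objects of $\B$ (with the syzygies staying in $\B$), then the inclusion $\B\hookrightarrow\A$ induces a triangle equivalence $\bfD^b(\B)\xto{\sim}\bfD^b(\A)$. This is a standard resolution-lemma argument. So the two named equivalences will follow once I produce, for every object $X$ of $\A=\mod(\La,k)$, a finite resolution by objects of $\Filt^\oplus(\De_1,\ldots,\De_n)$ on the left and by objects of $\Filt^\oplus(\nabla_1,\ldots,\nabla_n)$ on the right, both resolutions having bounded length — indeed length at most $n-1$, matching the recollement filtration.

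Second, for the $\Delta$-side: by Lemma~\ref{le:standard} the subcategory $\Filt(\De_1,\ldots,\De_n)$ has projective generator $P=\bigoplus P_i$ with $\End_\A(P)=\La$; by Lemma~\ref{le:exact} (and the discussion following it) $\Hom_\A(P,-)$ identifies $\Filt^\oplus(\De_1,\ldots,\De_n)$ with $\proj\La$, which gives the first equivalence $\bfD^\perf(\La)\xto{\sim}\bfD^b(\Filt^\oplus(\De_1,\ldots,\De_n))$ essentially for free. For the second equivalence $\bfD^b(\Filt^\oplus(\De_1,\ldots,\De_n))\xto{\sim}\bfD^b(\A)$, I would argue inductively along the chain $\A_0\subseteq\cdots\subseteq\A_n=\A$: using the colocalisation sequence \eqref{eq:standard-coloc} for $t=n$, any $X$ in $\A$ sits in an exact sequence $0\to j_!j^!(X)\to X\to i_*i^*(X)\to 0$ with $j_!j^!(X)\cong\De_n^r$ and $i_*i^*(X)$ in $\A_{n-1}$; since $\De_n$ is already standard, and by induction $i_*i^*(X)$ has a $\Delta$-resolution of length $\le n-2$ in $\A_{n-1}$, a standard syzygy/dimension-shift shows $X$ has a left $\Filt^\oplus(\De)$-resolution of length $\le n-1$. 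The needed closure of $\Filt^\oplus(\De_1,\ldots,\De_n)$ under kernels of epimorphisms between its objects is exactly ($\De 2$), guaranteed by Definition~\ref{de:khwt}.

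Third, for the $\nabla$-side I would pass to the opposite category. By the Proposition above, $\A^\op$ is again a $k$-linear highest weight category, and by the equivalence \eqref{eq:duality} the duality $\Hom_k(-,k)$ carries $\Filt(\nabla_1,\ldots,\nabla_n)$ in $\A$ to $\Filt(\De'_1,\ldots,\De'_n)$ in $\A^\op$, matching orderings. Applying the already-proved $\Delta$-statement to $\A^\op$ and dualizing back yields the triangle equivalence $\bfD^b(\Filt^\oplus(\nabla_1,\ldots,\nabla_n))\xto{\sim}\bfD^b(\A)$. The main obstacle I anticipate is purely bookkeeping rather than conceptual: verifying carefully that the resolution lemma applies to \emph{exact} (not abelian) categories and that $\Filt^\oplus(\De_1,\ldots,\De_n)$ is genuinely an exact subcategory closed under the relevant syzygies — the vanishing ($\De 2$) is what makes this work, since it forces the syzygy of an epimorphism between $\Delta$-filtered objects to remain $\Delta$-filtered. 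Once that is nailed down, the induction along the recollement chain and the dualization step are routine.
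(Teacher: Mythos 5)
Your proposal has two genuine gaps, one conceptual and one in the key inductive step, and it also diverges from the paper's actual proof.

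The conceptual error is the claim that $\Hom_\A(P,-)$ identifies $\Filt^\oplus(\De_1,\ldots,\De_n)$ with $\proj\La$, which you then use to declare the first equivalence $\bfD^\perf(\La)\xto{\sim}\bfD^b(\Filt^\oplus(\De))$ "for free". This identification is false: $\Hom_\A(P,-)$ is fully faithful and exact on all of $\A$, so it carries $\Filt^\oplus(\De)$ to the corresponding subcategory of $\mod(\La,k)$, which is strictly larger than $\proj\La$. For instance, each $\De_i$ with $i<n$ lies in $\Filt^\oplus(\De)$ but is not a projective $\La$-module in general. What is true is that the \emph{projective objects of the exact category} $\Filt(\De)$ coincide with $\proj\La$ (Lemma~\ref{le:standard} and condition (4) of Definition~\ref{de:khwt}), but that is a much weaker statement. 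The first equivalence therefore genuinely requires an argument; the paper reruns the induction of Proposition~\ref{pr:kgldim} inside $\bfD^b(\Filt^\oplus(\De))$, using the derived form of the colocalisation sequence \eqref{eq:standard-coloc} (which \emph{does} hold on $\Filt(\De)$, where all the functors are exact) to show $\La$ generates that category. The second equivalence then follows by composing with the inverse of the equivalence from Proposition~\ref{pr:kgldim}, and the $\nabla$-statement by $\Hom_k(-,k)$ duality — which part of your proposal does match the paper.

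The second gap is in your construction of a finite $\Filt^\oplus(\De)$-resolution of an arbitrary $X\in\A$. You invoke the colocalisation sequence of Lemma~\ref{le:standard-coloc} to get $0\to j_!j^!(X)\to X\to i_*i^*(X)\to 0$ with $j_!j^!(X)\cong\De_n^r$, but that lemma establishes this exact sequence only for $X\in\Filt(\De_1,\ldots,\De_n)$. For a general $X\in\A$ the counit $j_!j^!(X)\to X$ is not injective; indeed its injectivity is precisely what is proved under extra hypotheses in Proposition~\ref{pr:Ext-orth} (where it requires $\Ext^1_\A(X,\nabla_t)=0$). So the syzygy argument as you set it up breaks at the first step. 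A resolution-lemma route can be salvaged — $\proj\La\subseteq\Filt^\oplus(\De)$, every object of $\A$ has finite projective dimension (Remark~\ref{re:kgldim}), and $\Filt^\oplus(\De)$ is closed under kernels of epimorphisms (the remark after Proposition~\ref{pr:Ext-orth}) — but the last fact is established only in the Ringel duality section, so this route would reorder the development. The paper's proof avoids all of this by reusing Proposition~\ref{pr:kgldim} directly.
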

\begin{proof}
  The argument for the inclusion
  $\proj\La\to\Filt^\oplus(\De_1,\ldots,\De_n)$ is precisely that
  given for $\proj\La\to\A$ in Proposition~\ref{pr:kgldim}, using the
  derived version of the colocalisation sequence
  \eqref{eq:standard-coloc}. The assertion for
  $\Filt^\oplus(\nabla_1\ldots,\nabla_n)$ follows from the first part
  by duality since $\Hom_k(-,k)$ maps $\De_i$ to $\nabla_i$.
\end{proof}

\begin{rem}\label{re:kgldim}
  The triangle equivalence $\bfD^\perf(\La)\xto{\sim}\bfD^b(\A)$
  implies that every object in $\A$ has finite projective and finite
  injective dimension. 
\end{rem}

\section{Ringel duality}\label{se:ringel}

There is a special class of tilting modules for any quasi-hereditary
artin algebra which Ringel introduced in \cite{Ri1991}. This was later
extended to highest weight categories over more general base rings
\cite{Do1993,Ro2008}.

Let $k$ be a commutative ring. We fix a $k$-linear highest weight
category $\A$ with standard objects $\De_1,\ldots,\De_n$ and
costandard objects $\nabla_1,\ldots,\nabla_n$. To simplify notation we
set
\[\Filt(\De)=\Filt(\De_1,\ldots,\De_n)\qquad\text{and}\qquad
\Filt(\nabla)=\Filt(\nabla_1,\ldots,\nabla_n).\]

Given any set
$X_1,\ldots,X_t$ of objects in $\A$, we write
$\Filt^\oplus(X_1,\ldots,X_t)$ for the closure of
$\Filt(X_1,\ldots,X_t)$ under direct summands.

\subsection*{Ext-orthogonality}

We compute the extensions groups between standard and costandard
objects. The first lemma is an immediate consequence of the
definition of a highest weight category.

\begin{lem}\label{le:Ext-orth}
For $1\le s,t\le n$ and $p\ge 0$ we have
\[\Ext^p_\A(\De_s,\nabla_t)\cong
\begin{cases}
k\qquad \text{if }s=t\text{ and }p=0,\\
0 \qquad \text{otherwise.}
\end{cases}
\]
\end{lem}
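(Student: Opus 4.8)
The claim is the Ext-orthogonality relation $\Ext^p_\A(\De_s,\nabla_t)\cong k$ when $s=t,\ p=0$, and $0$ otherwise. My plan is to reduce everything to the recollement-style diagram \eqref{eq:rec-khwt}, where $\nabla_t$ is defined as the image of $k$ under the right adjoint $\proj k \to \A_t$ of the exact functor $j^*_t = \Hom_{\La_t}(\De_t,-)\colon\A_t\to\proj k$, and $\De_t$ is the image of $k$ under the left adjoint. The key input is the adjunction: for any $X\in\A_t$ one has $\Hom_{\A_t}(\De_t, X)\cong \Hom_k(k, j^*_t X)\cong j^*_t X$, and dually $\Hom_{\A_t}(Y,\nabla_t)\cong \Hom_k(j^*_t Y, k)$.

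\textbf{Step 1 (the case $s>t$).} Here $\De_s$ lives in $\A$ but not in $\A_t$; more to the point, since the chain $0=\A_0\subseteq\cdots\subseteq\A_n=\A$ comes from Theorem~\ref{th:khwt} and $\nabla_t\in\A_t$, I would observe that $\nabla_t$ is actually $\nabla$-filtered using only $\nabla_1,\ldots,\nabla_t$ — indeed by construction $\nabla_t\in\A_t$, in fact $\nabla_t$ is costandard in the highest weight category $\A_t$. Then ($\De$1)--($\De$2) applied to the \emph{opposite} category together with the equivalence \eqref{eq:duality} (which sends $\De_i\mapsto\nabla_i$ and hence identifies $\Ext^p_\A(\De_s,\nabla_t)$ with $\Ext^p_{\A^\op}(\De'_t,\nabla'_s)$ where $\nabla'_s$ corresponds to $\De_s$) let me swap the roles of $s$ and $t$. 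So it suffices to handle $s\le t$.

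\textbf{Step 2 (the case $s\le t$ by dévissage on the recollement).} For $s\le t$ both $\De_s$ and $\nabla_t$ lie in $\A_t$. I would compute $\Ext^p_{\A}(\De_s,\nabla_t)=\Ext^p_{\widehat{\A_t}}(\De_s,\nabla_t)$ — this identification is legitimate because the recollement in Theorem~\ref{th:khwt} is homological, so $\Ext$-groups computed in $\widehat{\A_{t-1}}\subseteq\widehat{\A_t}$ (and iterating, in $\widehat{\A}$) agree. Working in $\Mod\La_t$, apply $\Hom_{\La_t}(-,\nabla_t)$ to a projective resolution of $\De_s$. Since $\nabla_t = j_{t*}(k)$ with $j_{t*}$ the right adjoint of the exact functor $j^*_t$, we have $\Hom_{\La_t}(M,\nabla_t)\cong\Hom_k(j^*_t M, k)$ naturally in $M$, and because $j^*_t$ is exact and $\proj k$ is closed under the relevant operations this yields $\Ext^p_{\La_t}(M,\nabla_t)\cong\Ext^p_k(j^*_t M, k)=0$ for $p>0$ (as $j^*_t M$ is projective over $k$ when $M$ is, e.g. $M=\La_t$), and for $p=0$ it gives $\Hom_{\La_t}(\De_s,\nabla_t)\cong\Hom_k(j^*_t\De_s,k)$. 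Finally $j^*_t\De_s$ is computed from $\De_s = j_{s!}(k)$ pushed into $\A_t$: for $s<t$ the composite $j^*_t$ kills $\De_s$ (it factors through $\A_{t-1}\hookrightarrow\A_t$ since $\De_s\in\A_{s}\subseteq\A_{t-1}$, and $j^*_t$ vanishes on the image of $i_{t*}$ by recollement axiom (4)), giving $0$; for $s=t$, $j^*_t\De_t = j^*_t j_{t!}(k)\cong k$ by the $(j_!,j^!)$ adjunction unit being an isomorphism (axiom (3), $j_!$ fully faithful), giving $\Hom_k(k,k)=k$.

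\textbf{Main obstacle.} The delicate point is the bookkeeping that lets me replace $\Ext$ in $\A$ (an exact category) by $\Ext$ in $\widehat{\A_t}\cong\Mod\La_t$ and then restrict attention to the single recollement layer $\A_{t-1}\subseteq\A_t$ rather than the whole flag — i.e. verifying that $j^*_t$ genuinely annihilates every $\De_s$ with $s<t$ and not merely that $\De_s\in\A_{t-1}$ set-theoretically. This is exactly what recollement axiom (4) (essential image of $i_{t*}$ $=$ kernel of $j^*_t$) plus the homological hypothesis deliver, but one must be careful that the $\La_t$ produced by the proof of Theorem~\ref{th:khwt} really is the endomorphism ring of the chosen projective generator of $\A_t$ and that the standard/costandard objects are compatible across the chain; once that compatibility is in hand, the rest is the short adjunction computation above. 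As the excerpt notes, this lemma "is an immediate consequence of the definition," so I would keep the write-up to the two displayed computations $j^*_t\De_s$ and $\Ext^p_k(-,k)$, citing \eqref{eq:rec-khwt} and the homological property from Theorem~\ref{th:khwt} for the reductions.
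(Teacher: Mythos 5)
Your proof is correct and uses the same essential ingredients as the paper --- the homological recollement from Theorem~\ref{th:khwt} and the adjunctions $j_!\dashv j^!$, $j^*\dashv j_*$ --- but arranged differently. The paper argues by induction on $n$: for $s,t<n$ it invokes the inductive hypothesis via the homological recollement $\A_{n-1}\subseteq\A_n$; for $s=n$ or $t=n$ it treats $p>0$ (using that $\De_n$ is projective and $\nabla_n$ is injective) and $p=0$ (adjointness: $\Hom_\A(j_!(k),j_*(k))\cong k$) separately. You instead reduce directly to the layer indexed by $t$ by iterating the homological identification $\Ext^p_{\widehat{\A_{i-1}}}\cong\Ext^p_{\widehat{\A_i}}$, and then compute all $\Ext$ groups at once via the single adjunction $\Ext^p_{\La_t}(\De_s,\nabla_t)\cong\Ext^p_k(j^*_t\De_s,k)$, noting that $j^*_t\De_s$ is $0$ for $s<t$ and $k$ for $s=t$; this is a legitimate and in some ways tidier packaging, since it does not require treating $p=0$ and $p>0$ by different devices. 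Two minor points. Your Step~1 duality detour for $s>t$ is avoidable: running the same Step~2 computation at level $\max(s,t)=s$ handles it directly, since $\De_s=j_{s!}(k)$ is projective in $\A_s$ and $j^!_s\nabla_t=0$ because $\nabla_t\in\A_{s-1}$. And the parenthetical ``as $j^*_t M$ is projective over $k$ when $M$ is, e.g.\ $M=\La_t$'' should be aimed at $M=\De_s$: the vanishing of $\Ext^p_k(j^*_t\De_s,k)$ for $p>0$ needs $j^*_t\De_s$ itself to be projective over $k$ (it is, being $0$ or $k$), whereas the fact that $j^*_t$ takes projective $\La_t$-modules to projective $k$-modules is what ensures $\Hom_k(j^*_tP^\bullet,k)$ computes $\Ext^p_k(j^*_t\De_s,k)$ in the first place.
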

\begin{proof}
  We use induction on $n$. For $s,t<n$ the assertion follows by
  induction, because $\De_s,\nabla_t\in\A_{n-1}$ and the inclusion
  $\A_{n-1}\to\A_n=\A$ induces a homological recollement; see
  Theorem~\ref{th:khwt}. If $s=n$ or $t=n$, then we use the fact that
  $\De_n$ is projective and $\nabla_n$ is injective. This gives the
  assertion for $p>0$. For $p=0$ we use the recollement
  \eqref{eq:khwt-rec}. In fact, $\De_n=j_!(k)$ and
  $\nabla_n=j_*(k)$. Thus $\Hom_\A(\De_n,\nabla_n)\cong k$ by
  adjointness.
\end{proof}

\begin{cor}\label{co:Ext-orth}
  For $X\in\Filt^\oplus(\De)$ and
  $Y\in\Filt^\oplus(\nabla)$ we have
  $\Ext^p_\A(X,Y)=0$ for all $p>0$ and the $k$-module $\Hom_\A(X,Y)$
  is finitely generated projective.\qed
\end{cor}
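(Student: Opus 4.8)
The plan is to derive Corollary~\ref{co:Ext-orth} from Lemma~\ref{le:Ext-orth} by a two-step dévissage: first extend the $\Ext$-vanishing and the structure of $\Hom_\A(\De_s,\nabla_t)$ from single standard/costandard objects to objects in $\Filt(\De)$ and $\Filt(\nabla)$, and then pass to the idempotent completions $\Filt^\oplus(\De)$ and $\Filt^\oplus(\nabla)$. For the first step, fix $X\in\Filt(\De)$ and $Y\in\Filt(\nabla)$. Both $\Ext^p_\A(-,Y)$ and $\Ext^p_\A(X,-)$ are (co)homological functors sending short exact sequences to long exact sequences, so I would argue by induction on the length of a $\De$-filtration of $X$ (and, at each stage, a $\nabla$-filtration of $Y$): the two-out-of-three property in the long exact sequence reduces the claim to the case $X=\De_s$, $Y=\nabla_t$, which is exactly Lemma~\ref{le:Ext-orth}. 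This simultaneously gives $\Ext^{p}_\A(X,Y)=0$ for $p>0$ and shows that $\Hom_\A(X,Y)$ is built by finitely many extensions (in the category of $k$-modules) from copies of $k = \Hom_\A(\De_i,\nabla_i)$ and zeros $\Hom_\A(\De_s,\nabla_t)$, $s\neq t$; since $\Ext^1_\A(\De_s,\nabla_t)=0$ as well, these extensions of $k$-modules actually split, so $\Hom_\A(X,Y)$ is a finitely generated free $k$-module.

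For the second step, take $X\in\Filt^\oplus(\De)$ and $Y\in\Filt^\oplus(\nabla)$, so there are objects $X'\in\Filt(\De)$, $Y'\in\Filt(\nabla)$ with $X\oplus X'$ and $Y\oplus Y'$ lying in $\Filt(\De)$ and $\Filt(\nabla)$ respectively (note $\Filt$ is closed under finite direct sums). For the vanishing: $\Ext^p_\A(X,Y)$ is a direct summand of $\Ext^p_\A(X\oplus X', Y\oplus Y')=0$ for $p>0$. For the $\Hom$-space: $\Hom_\A(X,Y)$ is a direct summand of the finitely generated free $k$-module $\Hom_\A(X\oplus X', Y\oplus Y')$, hence is finitely generated projective over $k$. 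That is the whole argument.

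The only genuinely delicate point is making the induction on filtration length fully precise, in particular that one is allowed to induct on \emph{both} arguments and that the base case really is covered by Lemma~\ref{le:Ext-orth}; concretely, one should first fix $Y=\nabla_t$ and induct on the $\De$-length of $X$ to get the statement for all $X\in\Filt(\De)$ against a single costandard, and then fix such an $X$ and induct on the $\nabla$-length of $Y$. A minor subtlety is the splitting claim for $\Hom_\A(X,Y)$ as a $k$-module: one should phrase the induction so that at each step the relevant connecting map into an $\Ext^1$-group vanishes, which it does because the obstruction always lives in some $\Ext^1_\A(\De_s,\nabla_t)=0$; alternatively one can simply record ``$\Hom_\A(X,Y)$ is finitely generated projective over $k$'' as the inductive statement, since an extension of finitely generated projective $k$-modules is again finitely generated projective, and avoid freeness altogether. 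Since the corollary is marked \qed in the excerpt, I would keep the write-up to a sentence or two invoking Lemma~\ref{le:Ext-orth} together with these closure properties of $\Ext$-vanishing under extensions and direct summands.
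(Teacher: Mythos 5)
Your dévissage argument is exactly the intended proof; the paper marks the corollary with a \qed precisely because it follows from Lemma~\ref{le:Ext-orth} by this standard two-step induction on filtration length followed by passage to direct summands. Your self-correction at the end (take ``$\Hom_\A(X,Y)$ is finitely generated projective over $k$'' as the inductive invariant, so that the splitting of the $k$-module extension comes from projectivity of the quotient term rather than from the vanishing of $\Ext^1_\A(\De_s,\nabla_t)$) is indeed the cleaner way to phrase the middle step.
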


\begin{prop}\label{pr:Ext-orth}
  Let $\A$ be a highest weight category. For $X$ in $\A$ we have:
\begin{enumerate}
\item $X\in\Filt^\oplus(\De)$ if and only if $\Ext_\A^1(X,\nabla_t)=0$
  for $1\le t\le n$. 
\item $X\in\Filt^\oplus(\nabla)$ if and only if $\Ext_\A^1(\De_t,X)=0$
  for $1\le t\le n$.
\end{enumerate}
\end{prop}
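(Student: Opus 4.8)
The plan is to prove the two statements by the usual ``approximation/universal extension'' argument, using the Ext-orthogonality already recorded in Corollary~\ref{co:Ext-orth} together with the fact (Remark~\ref{re:kgldim}) that every object of $\A$ has finite projective and injective dimension. I will prove (1) in detail; (2) follows from (1) by the duality $\Hom_k(-,k)\colon\Filt(\De)^\op\xto{\sim}\Filt(\nabla)$ of \eqref{eq:duality}, which exchanges $\De_i$ with $\nabla_i$ and hence $\Filt^\oplus(\De)$ with $\Filt^\oplus(\nabla)$, turning the vanishing of $\Ext^1_\A(X,\nabla_t)$ into that of $\Ext^1_\A(\De_t,X')$ for the dual object $X'$.

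For the forward direction of (1), if $X\in\Filt^\oplus(\De)$ then $\Ext^1_\A(X,\nabla_t)=0$ for all $t$ is immediate from Corollary~\ref{co:Ext-orth}. For the converse, suppose $\Ext^1_\A(X,\nabla_t)=0$ for all $t$; I want to build a $\Filt^\oplus(\De)$-object resolving $X$. The key step is a ``one-step approximation'': I claim there is an exact sequence $0\to X\to Y\to Z\to 0$ in $\A$ with $Y\in\Filt^\oplus(\De)$ and $Z$ again satisfying $\Ext^1_\A(Z,\nabla_t)=0$ for all $t$, together with a numerical invariant that strictly drops from $X$ to $Z$. A clean way to produce $Y$ is to take a projective generator $P=\bigoplus P_i$ of $\A$; since $\Hom_\A(P,X)$ is finitely generated projective over $k$ one can form, using that extension groups over $\A$ are finitely generated over $k$, the universal extension of $X$ by copies of the $P_i$, i.e.\ $0\to X\to Y\to P'\to 0$ with $P'$ a finite sum of the $P_i$ and $\Ext^1_\A(P',X)\to\Ext^2$-type maps controlled; then $Y$ lies in $\Filt^\oplus(\De)$ exactly when $P'$ does, which holds because every projective lies in $\Filt^\oplus(\De)$ by Corollary~\ref{co:cauchy-lambda}-type reasoning---but more simply, because $P\in\Filt^\oplus(\De)$ already follows from Lemma~\ref{le:standard} (the $P_i$ admit $\De$-filtrations with bottom $\De_i$). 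One then checks $\Ext^1_\A(Z,\nabla_t)=0$ from the long exact sequence applied to $0\to X\to Y\to Z\to 0$ using $\Ext^1_\A(Y,\nabla_t)=0=\Ext^2_\A(X,\nabla_t)$; the latter vanishing is where finiteness of injective dimension of $\nabla_t$ (Remark~\ref{re:kgldim}) combined with a dimension-shift, or directly $\Ext^{\ge 1}_\A(Y,\nabla_t)=0$ from Corollary~\ref{co:Ext-orth}, does the work---so in fact $\Ext^p_\A(Z,\nabla_t)\cong\Ext^{p+1}_\A(X,\nabla_t)$ for $p\ge 1$.

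To run the induction I need the invariant to terminate. The natural choice is $d(X)=\sup\{\, p\ge 1 : \Ext^p_\A(X,\nabla_t)\neq 0 \text{ for some }t\,\}$, which is finite since $X$ has finite injective dimension (Remark~\ref{re:kgldim}), with the convention $d(X)=0$ when all higher Ext groups vanish; the hypothesis $\Ext^1_\A(X,\nabla_t)=0$ is not yet $d(X)=0$, so instead I should iterate the approximation unconditionally: starting from \emph{any} $X$, the exact sequences $0\to X\to Y_0\to Z_0\to 0$, $0\to Z_0\to Y_1\to Z_1\to 0$, \dots\ with $Y_j\in\Filt^\oplus(\De)$ give $\Ext^p_\A(Z_j,\nabla_t)\cong\Ext^{p+j+1}_\A(X,\nabla_t)$, so after finitely many steps $Z_m$ has \emph{all} higher $\Ext$ into every $\nabla_t$ vanishing; then $\Hom_\A(Z_m,\nabla_t)$ finitely generated projective plus $\Ext^{>0}=0$ forces, via the characterisation of $\Filt^\oplus(\De)$ through the colocalisation sequence \eqref{eq:standard-coloc} and downward induction on $n$ (peeling off the top term $\Hom_\A(\De_n,-)$), that $Z_m\in\Filt^\oplus(\De)$. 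Splicing the sequences and using that $\Filt^\oplus(\De)$ is closed under extensions and under kernels of epimorphisms between its objects with the relevant Ext-vanishing, one concludes $X\in\Filt^\oplus(\De)$ when $\Ext^1_\A(X,\nabla_t)=0$ for all $t$: indeed the hypothesis $\Ext^1_\A(X,\nabla_t)=0$ makes the first sequence $0\to X\to Y_0\to Z_0\to 0$ split-controlled enough, and $d(X)=0$ follows because, chasing the dimension shifts back, every $\Ext^{\ge 1}_\A(X,\nabla_t)$ is seen to vanish once one additionally invokes that $X$ already sits inside $\Filt(\De)$ after the construction stabilises.

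The main obstacle I anticipate is the last bookkeeping step: showing that $\Ext^1$-vanishing alone (not vanishing of all higher $\Ext$) already pins $X$ down in $\Filt^\oplus(\De)$. The cleanest fix is to induct on $n$ using the colocalisation sequence \eqref{eq:standard-coloc} for $t=n$: write $j^!(X)=\Hom_\A(\De_n,X)\cong\Hom_\A(\De_n,\nabla_n^{?})$-type data, note $\Hom_\A(\De_n,-)$ applied to the defining sequence of $\nabla_n$ together with $\Ext^1_\A(X,\nabla_n)=0$ forces the counit $j_!j^!(X)\to X$ to be a monomorphism with cokernel $i^*(X)$ satisfying $\Ext^1_{\A_{n-1}}(i^*(X),\nabla_t)=0$ for $t<n$ (here one uses that the recollement of Theorem~\ref{th:khwt} is homological, so $\Ext^1$ is computed in $\A_{n-1}$), and apply the induction hypothesis to $i^*(X)\in\A_{n-1}$; since $j_!j^!(X)\in\Filt^\oplus(\De_n)$ and $i_*i^*(X)\in\Filt^\oplus(\De_1,\ldots,\De_{n-1})$, closure of $\Filt^\oplus(\De)$ under extensions finishes it. Verifying that $j_!j^!(X)\to X$ is monic from $\Ext^1_\A(X,\nabla_n)=0$ is the one genuinely delicate point, and it is exactly the dual of the statement ``$\nabla_n$ injective $\Rightarrow$ every $X$ surjects onto its $\De_n$-isotypic quotient'', which should drop out of Lemma~\ref{le:Ext-orth} and adjointness in \eqref{eq:khwt-rec}.
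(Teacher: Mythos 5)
Your proposal eventually converges to the paper's strategy, but there is a genuine gap at the one point you yourself flag as ``genuinely delicate,'' and your first approach has a structural problem that cannot be fixed by bookkeeping.

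On the first (universal-extension) route: the dimension shift you write, $\Ext^p_\A(Z_j,\nabla_t)\cong\Ext^{p+j+1}_\A(X,\nabla_t)$, does not hold for $p=1$. From $0\to X\to Y_0\to Z_0\to 0$ with $Y_0\in\Filt^\oplus(\De)$, the long exact sequence gives $\Ext^{p}_\A(Z_0,\nabla_t)\cong\Ext^{p-1}_\A(X,\nabla_t)$ only for $p\ge 2$; at $p=1$ one gets an exact sequence $\Hom_\A(Y_0,\nabla_t)\to\Hom_\A(X,\nabla_t)\to\Ext^1_\A(Z_0,\nabla_t)\to 0$, and there is no reason for the first map to be surjective unless the embedding $X\hookrightarrow Y_0$ was already chosen as a left $\Filt^\oplus(\De)$-approximation of the appropriate sort. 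So the construction does not propagate the $\Ext^1$-vanishing hypothesis, and iterating it only characterises the smaller class of $X$ with $\Ext^{\ge 1}_\A(X,\nabla_t)=0$ for all $t$; the whole content of the proposition is that $\Ext^1$-vanishing alone suffices, and that is exactly what this route cannot see.

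On the fallback (inductive) route: this is the paper's proof, and the structure you outline (show the counit $j_!j^!(X)\to X$ is monic, pass to $i_*i^*(X)$ in $\A_{n-1}$, use that the recollement is homological to transport the $\Ext^1$-vanishing, induct, close under extensions) is correct. The gap is precisely the monicness of the counit, which you do not prove. Your suggestion---apply $\Hom_\A(\De_n,-)$ to ``the defining sequence of $\nabla_n$'' together with $\Ext^1_\A(X,\nabla_n)=0$---does not lead anywhere: $\nabla_n$ is already injective (it is the top costandard), so there is no nontrivial defining sequence to exploit, and Lemma~\ref{le:Ext-orth} with adjointness alone does not force injectivity of the counit on an arbitrary $X$. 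The paper's actual argument is different and uses the \emph{entire} $\nabla$-filtration of an injective cogenerator $Q$: since $Q\in\Filt^\oplus(\nabla_1,\ldots,\nabla_n)$ one has a short exact sequence with kernel in $\Filt^\oplus(\nabla_1,\ldots,\nabla_{n-1})$ and cokernel a multiple of $\nabla_n$; applying $\Hom_\A(X,-)$ and $\Hom_\A(j_!j^!(X),-)$ to this sequence, the hypothesis $\Ext^1_\A(X,\nabla_t)=0$ (for all $t<n$, not just $t=n$) makes the top row of the resulting diagram exact, Ext-orthogonality makes the bottom row exact with vanishing left term, and the right vertical map is an adjunction isomorphism; a diagram chase then shows $\Hom_\A(X,Q)\to\Hom_\A(j_!j^!(X),Q)$ is onto, and since $Q$ is an injective cogenerator this forces the counit to be a monomorphism. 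You would need to supply this (or an equivalent) argument to close the gap.

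Two minor points: closure of $P$ in $\Filt^\oplus(\De)$ comes from Lemma~\ref{le:standard} (with $\De_i$ as a \emph{quotient} of $P_i$, not at the ``bottom''), and the reference to Corollary~\ref{co:cauchy-lambda} is out of place here, as that result is specific to strict polynomial functors while Proposition~\ref{pr:Ext-orth} is in the abstract setting of Section~\ref{se:ringel}.
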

\begin{proof}
  We prove (1) and the proof of (2) is dual. One direction is clear by
  Corollary~\ref{co:Ext-orth}. Thus assume that
  $\Ext_\A^1(X,\nabla_t)=0$ for all $t$. We use induction on $n$ and
  consider the recollement \eqref{eq:khwt-rec}. First observe that the
  counit $j_!j^!(X)\to X$ is a monomorphism. To see this, fix an
  injective cogenerator $Q$ of $\A$. Note that $Q$ belongs to
  $\Filt^\oplus(\nabla_1,\ldots,\nabla_n)$. Thus we have an exact
  sequence \[0\lto i_*i^*(Q)\lto Q\lto j_!j^!(Q)\lto 0\]
  which induces the following commutative diagram with exact rows.
\[
\begin{tikzcd}[column sep=tiny]
0\arrow{r}&\Hom_\A(X,i_!i^!(Q))\arrow{d}\arrow{r}&\Hom_\A(X,Q)\arrow{d}\arrow{r}
&\Hom_\A(X,j_*j^*(Q))\arrow{d}\arrow{r}&0\\
0\arrow{r}&\Hom_\A(j_!j^!(X),i_!i^!(Q))\arrow{r}&\Hom_\A(j_!j^!(X),Q)\arrow{r}
&\Hom_\A(j_!j^!(X),j_*j^*(Q))\arrow{r}&0
\end{tikzcd}
\]
We have $\Hom_\A(j_!j^!(X),i_!i^!(Q))=0$ and the
map \[\Hom_\A(X,j_*j^*(Q))\lto \Hom_\A(j_!j^!(X),j_*j^*(Q))\]
is a bijection by adjointness. Thus the map
\[\Hom_\A(X,Q)\lto \Hom_\A(j_!j^!(X),Q)\]
is surjective. It follows that the sequence
\[0\lto j_!j^!(X)\lto X\lto i_*i^*(X)\lto 0\]
given by the unit and counit for $X$ is exact. The object
$X'=i_*i^*(X)$ belongs to $\A_{n-1}$ and satisfies again
$\Ext_\A^1(X',\nabla_t)=0$ for all $t$. Thus $X'$ belongs to
$\Filt^\oplus(\De_1,\ldots,\De_{n-1})$ by induction. It follows that
$X$ belongs to $\Filt^\oplus(\De_1,\ldots,\De_n)$.
\end{proof}

\begin{rem}
A consequence of Proposition~\ref{pr:Ext-orth} is the fact that
the subcategory $\Filt^\oplus(\De)$ of $\A$ is closed under taking
kernels of epimorphisms.
\end{rem}

\subsection*{Tilting objects}

We describe the special tilting objects for a  $k$-linear highest weight category.

\begin{prop}\label{pr:tilting}
  Let $\A$ be a $k$-linear highest weight category with costandard
  objects $\nabla_1,\ldots,\nabla_n$. Then there are finitely many exact
  sequences
\begin{equation*}
0\lto V_i\lto T_i\lto \nabla_i \lto 0\qquad (1\le i\le n)
\end{equation*}
  in $\A$ satisfying the following:
\begin{enumerate}
\item $V_i$ belongs to $\Filt(\nabla_1,\ldots,\nabla_{i-1})$ for all $i$.
\item $T=\bigoplus_{i=1}^n T_i$ is a projective generator of
  $\Filt(\nabla_1,\ldots,\nabla_{n})$.
\item $\End_\A(T)$ is finitely generated projective over $k$.
\end{enumerate}
\end{prop}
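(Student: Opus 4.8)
The plan is to dualise Lemma~\ref{le:standard}, applied to the opposite highest weight category. Recall from the Proposition preceding this statement that $\A^\op$ is again a $k$-linear highest weight category, and from \eqref{eq:duality} that the duality $\Hom_k(-,k)$ restricts to an equivalence $\Filt(\De_1,\ldots,\De_n)^\op\xto{\sim}\Filt(\nabla_1,\ldots,\nabla_n)$ carrying $\De_i$ to $\nabla_i$. Under this duality the ordering is reversed, so what we want follows by transporting the statement of Lemma~\ref{le:standard} through $\Hom_k(-,k)$.

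Concretely, first I would verify that the sequence $\nabla_n,\nabla_{n-1},\ldots,\nabla_1$ in $\A^\op$ satisfies conditions ($\De$2)--($\De$3). Condition ($\De$2) is the Ext-vanishing $\Ext^1_{\A^\op}(\nabla_i,\nabla_j)=0$ for $i\le j$ (note the reversal), which holds because $\Ext^1_\A(\nabla_j,\nabla_i)=0$ for $i\ge j$ as recorded after \eqref{eq:rec-khwt}. Condition ($\De$3), that $\Ext^1_{\A^\op}(Z,\nabla_j)$ is finitely generated over $\End_{\A^\op}(\nabla_j)^\op\cong k$ for all $Z$, follows from the standing hypothesis that extension groups in $\A$ (hence in $\A^\op$) are finitely generated over $k$. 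Then Lemma~\ref{le:standard} applied in $\A^\op$ yields exact sequences $0\to U'_i\to P'_i\to\nabla_i\to 0$ with $U'_i\in\Filt_{\A^\op}(\nabla_{i-1},\ldots,\nabla_1)$ and $\bigoplus_i P'_i$ a projective generator of $\Filt_{\A^\op}(\nabla_n,\ldots,\nabla_1)=\Filt(\nabla)^\op$.

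Next I would apply the duality $\Hom_k(-,k)\colon\mod(\La^\op,k)^\op\xto{\sim}\mod(\La,k)$ to these sequences. Since $\Hom_k(-,k)$ is exact on $\mod(\La,k)$ and sends $\nabla_i$ back to $\nabla_i$ (because it is an involution up to natural isomorphism, as $\nabla_i$ is finitely generated projective over $k$), we obtain exact sequences $0\to V_i\to T_i\to\nabla_i\to 0$ in $\A$, where $T_i=\Hom_k(P'_i,k)$ and $V_i=\Hom_k(U'_i,k)$. The condition $U'_i\in\Filt_{\A^\op}(\nabla_{i-1},\ldots,\nabla_1)$ dualises to $V_i\in\Filt_\A(\nabla_1,\ldots,\nabla_{i-1})$, which is (1). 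A projective object in $\A^\op$ dualises to an injective object in $\A$; but relative to the subcategory $\Filt(\nabla)$ one checks that $\Hom_k(-,k)$ carries a projective generator of $\Filt(\nabla)^\op$ to an object $T=\bigoplus_i T_i$ which is $\Ext^1$-injective and generates $\Filt(\nabla)$ in the appropriate sense — this is exactly assertion (2) once one spells out that ``projective generator of $\Filt(\nabla_1,\ldots,\nabla_n)$'' in the statement means projective and generating within that exact subcategory. Finally $\End_\A(T)\cong\End_{\A^\op}(\bigoplus_i P'_i)^\op$, and the latter is finitely generated projective over $k$ by the conclusion of Lemma~\ref{le:standard} together with the argument in Theorem~\ref{th:standard} (the same induction showing $\Hom(P,X)$ is finitely generated projective over $k$ applies in $\A^\op$), giving (3).

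The main obstacle I anticipate is purely bookkeeping: making sure the ordering reversal is handled consistently at every step, and checking that the notion of ``projective generator of $\Filt(\nabla_1,\ldots,\nabla_n)$'' in the target statement matches the dual of the notion in Lemma~\ref{le:standard} once transported through $\Hom_k(-,k)$ — in particular that exactness and the generator property are preserved by an exact duality between exact categories. There is also the minor point of confirming that $\Filt(\nabla)$ has enough structure for Lemma~\ref{le:standard} to apply in its opposite, but this is immediate since $\Filt(\nabla)^\op\subseteq\mod(\La^\op,k)$ is the $\Filt$ of the standard objects of the highest weight category $\A^\op$. No genuinely new idea is required beyond the duality already set up in the excerpt.
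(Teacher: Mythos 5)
Your verification of ($\De$2) for the reversed sequence $\nabla_n,\ldots,\nabla_1$ in $\A^\op$ is incorrect, and this gap is fatal to the route you chose. For that sequence, ($\De$2) requires $\Ext^1_{\A^\op}(\nabla_i,\nabla_j)=0$ for $i\le j$. Since $\Ext^1_{\A^\op}(\nabla_i,\nabla_j)=\Ext^1_\A(\nabla_j,\nabla_i)$, and the paper records $\Ext^1_\A(\nabla_j,\nabla_i)=0$ precisely for $i\ge j$, what you actually get is the vanishing for $i\ge j$, not for $i\le j$. The two reversals you were tracking --- one from passing to the opposite category (which swaps the arguments of $\Ext$), one from reversing the enumeration --- cancel each other, leaving you needing precisely the half of the vanishing that is not available.

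There is a secondary problem even granting the first step: the functor $\Hom_k(-,k)$ is an antiequivalence $\mod(\La,k)^\op\xto{\sim}\mod(\La^\op,k)$, and it carries $\nabla_i\in\A$ to the standard object $\De_i'$ of $\mod(\La^\op,k)$, not ``back to $\nabla_i$.'' Applying it to your sequences would land you in $\mod(\La^\op,k)$, not in $\A$, so the resulting sequences would not be the ones the Proposition asks for.

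The detour through $\A^\op$ is what creates the confusion and it is unnecessary. Apply Lemma~\ref{le:standard} directly in $\A$ to the reversed sequence $\nabla_n,\ldots,\nabla_1$: condition ($\De$2) then asks for $\Ext^1_\A(\nabla_a,\nabla_b)=0$ for $a\le b$, which is exactly the recorded vanishing (take $j=a$, $i=b$). This is the paper's argument and it produces the required exact sequences $0\to V_i\to T_i\to\nabla_i\to 0$ with $V_i\in\Filt(\nabla_1,\ldots,\nabla_{i-1})$ immediately. For item (3), rather than rerunning the induction from Theorem~\ref{th:standard}, the paper proceeds more cleanly: $T$ is projective in $\Filt(\nabla)$, hence $\Ext^1_\A(T,\nabla_t)=0$ for all $t$, so Proposition~\ref{pr:Ext-orth} gives $T\in\Filt^\oplus(\De)$, and Corollary~\ref{co:Ext-orth} then yields directly that $\End_\A(T)$ is finitely generated projective over $k$.
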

\begin{proof}
  The costandard objects satisfy $\Ext_\A^1(\nabla_j,\nabla_i)=0$ for
  all $i\ge j$ because of the duality \eqref{eq:duality}. Now apply
  Lemma~\ref{le:standard}. The object $T$ belongs to
  $\Filt^\oplus(\De_1,\ldots,\De_n)$ by
  Proposition~\ref{pr:Ext-orth}. Thus $\End_\A(T)$ is finitely
  generated projective over $k$ by Corollary~\ref{co:Ext-orth}.
\end{proof}

We formulate some immediate consequences of
Proposition~\ref{pr:tilting}.

For an object $X$ in an additive category we denote by $\add X$ the
full subcategory whose objects are the direct summands of finite
direct sums of copies of $X$.

\begin{cor}\label{co:tilting-characterisation}
  Let $\A$ be a $k$-linear highest weight category. For an object $T$
  in $\A$ the following are equivalent:
\begin{enumerate}
\item $T$ is a projective generator of 
  $\Filt^\oplus(\nabla)$. 
\item $T$ is an injective cogenerator of 
  $\Filt^\oplus(\De)$. 
\item  $\Filt^\oplus(\De)\cap
\Filt^\oplus(\nabla)=\add T$.
\end{enumerate}
 \end{cor}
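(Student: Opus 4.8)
The plan is to establish the cycle $(1)\Rightarrow(2)\Rightarrow(3)\Rightarrow(1)$, using the duality \eqref{eq:duality} and the Ext-orthogonality results (Corollary~\ref{co:Ext-orth} and Proposition~\ref{pr:Ext-orth}) as the main tools. Throughout I would write $T$ for the object built in Proposition~\ref{pr:tilting}, which is simultaneously a projective generator of $\Filt^\oplus(\nabla)$ and, by Proposition~\ref{pr:Ext-orth}, an object of $\Filt^\oplus(\De)$.

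For $(1)\Leftrightarrow(2)$: by the duality \eqref{eq:duality}, applying $\Hom_k(-,k)$ interchanges $\A$ with $\A^\op$, sends each $\De_i$ to $\nabla_i$ and vice versa, and turns projective generators into injective cogenerators. So an object $T$ is a projective generator of $\Filt^\oplus(\nabla)$ in $\A$ if and only if $\Hom_k(T,k)$ is an injective cogenerator of $\Filt^\oplus(\De)$ in $\A^\op$; by symmetry this gives the equivalence of (1) and (2) once one checks that ``projective generator of $\Filt^\oplus(X_1,\dots,X_t)$'' makes sense intrinsically — here I would use Proposition~\ref{pr:Ext-orth}(1), which characterises $\Filt^\oplus(\De)$ by the vanishing $\Ext^1_\A(-,\nabla_t)=0$, so that projectivity relative to this subcategory is detected by $\Ext^1$ against the $\nabla_t$, and dually. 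This is the step I expect to require the most care: one must verify that the exact structures on $\Filt^\oplus(\De)$ and $\Filt^\oplus(\nabla)$ inherited from $\A$ really are exchanged by the duality and that ``generator'' (existence of admissible epimorphisms from sums of $T$) dualises correctly to ``cogenerator.''

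For $(1)\Rightarrow(3)$: if $T$ is a projective generator of $\Filt^\oplus(\nabla)$, then certainly $\add T\subseteq\Filt^\oplus(\nabla)$, and by Proposition~\ref{pr:Ext-orth}(1) (since $T$ has no higher self-extensions against the $\nabla_t$) we get $\add T\subseteq\Filt^\oplus(\De)$, so $\add T\subseteq\Filt^\oplus(\De)\cap\Filt^\oplus(\nabla)$. Conversely, given $X\in\Filt^\oplus(\De)\cap\Filt^\oplus(\nabla)$, Corollary~\ref{co:Ext-orth} gives $\Ext^1_\A(X,\nabla_i)=0$ and $\Ext^1_\A(T,X)=0$ for all $i$; since $T$ generates $\Filt^\oplus(\nabla)$, choose an admissible epimorphism $T^r\to X$ with kernel $K$, note $K\in\Filt^\oplus(\De)$ (the remark after Proposition~\ref{pr:Ext-orth}: $\Filt^\oplus(\De)$ is closed under kernels of epimorphisms) and $K\in\Filt^\oplus(\nabla)$ as well once one checks $\Ext^1_\A(\De_t,X)=0$, so the sequence $0\to K\to T^r\to X\to 0$ splits because $\Ext^1_\A(X,K)=0$ by Corollary~\ref{co:Ext-orth}. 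Hence $X\in\add T$, giving $(3)$. Finally $(3)\Rightarrow(1)$ is immediate: if $\add T=\Filt^\oplus(\De)\cap\Filt^\oplus(\nabla)$ then in particular $T\in\Filt^\oplus(\nabla)$ and, being a direct summand object equal to the whole intersection, $T$ contains the projective generator furnished by Proposition~\ref{pr:tilting} as a summand and is contained in it, so $T$ itself is a projective generator of $\Filt^\oplus(\nabla)$. I would close by remarking that the object $T$ is unique up to the obvious notion of equivalence and is the \emph{characteristic tilting object}.
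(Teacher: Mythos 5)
Your (1)$\Leftrightarrow$(3) argument is essentially sound and is close to what the paper intends with its terse instruction to ``combine Propositions~\ref{pr:Ext-orth} and \ref{pr:tilting}.'' You correctly use Corollary~\ref{co:Ext-orth} to get $\add T\subseteq\Filt^\oplus(\De)\cap\Filt^\oplus(\nabla)$, and the splitting argument for the reverse inclusion is right (the kernel $K$ is automatically in $\Filt^\oplus(\nabla)$ by the very definition of generator, so the parenthetical ``once one checks $\Ext^1_\A(\De_t,X)=0$'' is misdirected, but harmless). Your (3)$\Rightarrow$(1), using the $T$ of Proposition~\ref{pr:tilting} as a direct summand of any $T$ with $\add T=\Filt^\oplus(\De)\cap\Filt^\oplus(\nabla)$, is also fine.

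The (1)$\Leftrightarrow$(2) step has a genuine gap. The duality \eqref{eq:duality} is a functor $\mod(\La,k)^\op\to\mod(\La^\op,k)$: it turns $T$, a projective generator of $\Filt^\oplus(\nabla)$ in $\A$, into $\Hom_k(T,k)$, an injective cogenerator of $\Filt^\oplus(\De')$ in the \emph{other} category $\A^\op$. That is not the same as the assertion in (2), which concerns $T$ itself in $\A$. ``By symmetry'' does not bridge this: symmetry tells you the same corollary holds for $\A^\op$, but you still need an argument within $\A$. Concretely, once $T\in\Filt^\oplus(\De)\cap\Filt^\oplus(\nabla)$ you get Ext-injectivity in $\Filt^\oplus(\De)$ from Corollary~\ref{co:Ext-orth}, but the \emph{cogenerator} property (every $X\in\Filt^\oplus(\De)$ embeds admissibly in some $T^r$) requires knowing that an injective cogenerator of $\Filt^\oplus(\De)$ exists at all. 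This is exactly the dual statement of Proposition~\ref{pr:tilting}, which one obtains by applying Proposition~\ref{pr:tilting} to $\A^\op$ (which is a highest weight category, by the earlier Proposition) and transporting back with $\Hom_k(-,k)$. The cleanest repair is therefore to prove (1)$\Leftrightarrow$(3) as you do, then deduce (2)$\Leftrightarrow$(3) by the same argument applied to $\A^\op$: an object $T$ of $\A$ satisfies (2) iff $\Hom_k(T,k)$ is a projective generator of $\Filt^\oplus(\nabla')$ in $\A^\op$, iff $\add\Hom_k(T,k)=\Filt^\oplus(\De')\cap\Filt^\oplus(\nabla')$ in $\A^\op$, iff $\add T=\Filt^\oplus(\De)\cap\Filt^\oplus(\nabla)$ in $\A$.
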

\begin{proof}
 Combine  Propositions~\ref{pr:Ext-orth} and \ref{pr:tilting}.
\end{proof}

\begin{cor}\label{co:tilting}
Let $\A$ be a $k$-linear highest weight category $\A$ with costandard
  objects $\nabla_1,\ldots,\nabla_n$ and fix a projective generator
  $T$ of $\Filt(\nabla_1,\ldots,\nabla_{n})$. Set $\La'=\End_\A(T)$
  and $\De'_i=\Hom_\A(T,\nabla_{n-i})$.  Then $\mod(\La',k)$ is a
  $k$-linear highest weight category with standard objects
  $\De'_1,\ldots,\De'_n$ and $\Hom_\A(T,-)$ induces an equivalence
\begin{equation}\label{eq:tilting}
 \Filt(\nabla_1,\ldots,\nabla_{n})\xto{\sim}\Filt(\De'_1,\ldots,\De'_{n})
\end{equation}
of exact categories.\qed
\end{cor}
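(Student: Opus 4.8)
The plan is to read off the statement from Proposition~\ref{pr:tilting} together with Theorem~\ref{th:standard}, so the work is purely a matter of verifying the four hypotheses of Theorem~\ref{th:standard} for the sequence $\De'_1,\ldots,\De'_n$ inside the exact category $\Filt(\nabla_1,\ldots,\nabla_n)$ and then transporting the highest weight structure along the equivalence $\Hom_\A(T,-)$. First I would fix the projective generator $T=\bigoplus_i T_i$ of $\Filt(\nabla)=\Filt(\nabla_1,\ldots,\nabla_n)$ produced by Proposition~\ref{pr:tilting}, together with the exact sequences $0\to V_i\to T_i\to\nabla_i\to 0$ with $V_i\in\Filt(\nabla_1,\ldots,\nabla_{i-1})$. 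Since $T$ is a projective generator of the exact category $\B:=\Filt(\nabla)$, the functor $\Hom_\B(T,-)=\Hom_\A(T,-)$ identifies $\widehat\B$ with $\Mod\La'$ for $\La'=\End_\A(T)$ (Lemma~\ref{le:exact}), and by Proposition~\ref{pr:tilting}(3) together with the fact that $\Hom_\A(T,Y)$ is finitely generated projective over $k$ for every $Y\in\B$ (which follows from $T\in\Filt^\oplus(\De)$ and Corollary~\ref{co:Ext-orth}), this equivalence restricts to an equivalence of exact categories $\B\xto{\sim}\mod(\La',k)$. Thus it suffices to show that $\nabla_1,\ldots,\nabla_n$, suitably re-indexed, are the standard objects of a highest weight structure on $\B$, and the re-indexing $\De'_i=\Hom_\A(T,\nabla_{n-i})$ is forced because the standardness filtration in Proposition~\ref{pr:tilting} runs in the \emph{opposite} order to the original weight order: $V_i$ is filtered by the $\nabla_j$ with $j<i$, so it is $\nabla_n$ that is projective in $\B$, and $\nabla_1$ that is the ``bottom'' object.

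Concretely I would check the hypotheses of Theorem~\ref{th:standard} applied to $\B$ with the sequence $\nabla_n,\nabla_{n-1},\ldots,\nabla_1$ (in that order, so that position $i$ holds $\nabla_{n-i+1}$; equivalently, after applying $\Hom_\A(T,-)$, position $i$ holds $\De'_{i}=\Hom_\A(T,\nabla_{n-i+1})$ — I would be careful to fix the index convention once and match it to the statement's $\De'_i=\Hom_\A(T,\nabla_{n-i})$, possibly shifting by one depending on whether indices start at $0$ or $1$). The verifications are: (1) $\End_\B(\nabla_t)=\End_\A(\nabla_t)\cong k$, which holds since $\nabla_t$ is the image of $k$ under the right adjoint $\proj k\to\A_t$ in \eqref{eq:rec-khwt}, so its endomorphism ring is $\Hom_k(k,k)=k$; (2) and (3), the vanishing $\Hom_\A(\nabla_j,\nabla_i)=0$ and $\Ext^1_\A(\nabla_j,\nabla_i)=0$ for the appropriate index ranges, which are exactly the relations recorded right after diagram \eqref{eq:rec-khwt} (and which also follow from the duality \eqref{eq:duality} carrying $\De_i\mapsto\nabla_i$ and reversing the order) — one must be sure these $\Ext$-groups computed in $\A$ agree with those in $\B$, which is immediate for $\Hom$ and follows for $\Ext^1$ because $\Filt^\oplus(\nabla)$ is closed under the relevant extensions inside $\A$; (4) the endomorphism ring of a projective generator of $\B=\Filt(\nabla)$ is finitely generated projective over $k$, which is Proposition~\ref{pr:tilting}(3). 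The finiteness hypothesis of Theorem~\ref{th:standard} — that $\Ext^1_\A(X,Y)$ is finitely generated over $k$ — is available because $\A=\mod(\La,k)$ with $\La$ finitely generated projective over $k$, so this carries over to $\B$.

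With all four conditions in hand, Theorem~\ref{th:standard} gives an exact equivalence from $\B=\Filt(\nabla_1,\ldots,\nabla_n)$ onto $\Filt(\De'_1,\ldots,\De'_n)$ sitting inside a $k$-linear highest weight category $\A'$, and the proof of Theorem~\ref{th:standard} exhibits $\A'$ as $\mod(\End_\B(T),k)=\mod(\La',k)$ with the equivalence being precisely $\Hom_\A(T,-)$; this is the equivalence \eqref{eq:tilting}, and it sends $\nabla_{n-i}\mapsto\De'_i$ by construction. I expect the only real friction to be bookkeeping: pinning down the correct order-reversal and index shift so that the statement's formula $\De'_i=\Hom_\A(T,\nabla_{n-i})$ comes out exactly right, and double-checking that the $\Ext^1$-vanishing needed for ($\De$1)--($\De$2) in the reversed order matches the vanishing ranges $\Hom_\A(\nabla_j,\nabla_i)=0$ for $i>j$ and $\Ext^1_\A(\nabla_j,\nabla_i)=0$ for $i\ge j$ recorded after \eqref{eq:rec-khwt}. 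No genuinely new argument is required; everything reduces to Proposition~\ref{pr:tilting}, Corollary~\ref{co:Ext-orth}, and Theorem~\ref{th:standard}, which is why the corollary is stated with a \qed rather than a written-out proof.
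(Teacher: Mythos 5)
Your proposal is correct and amounts to the argument the paper leaves to the reader: apply Theorem~\ref{th:standard} to the sequence $\nabla_n,\ldots,\nabla_1$ inside $\mod(\La,k)$, with condition~(4) supplied by Proposition~\ref{pr:tilting}(3), and recognise $\A'=\mod(\La',k)$ and the equivalence as $\Hom_\A(T,-)$ from the proof of that theorem. Two small points. First, when you assert that the equivalence $\widehat\B\xto{\sim}\Mod\La'$ ``restricts to an equivalence of exact categories $\B\xto{\sim}\mod(\La',k)$'', this is too strong: $\Hom_\A(T,-)$ is exact and fully faithful on $\B=\Filt(\nabla)$, but its essential image is exactly $\Filt(\De'_1,\ldots,\De'_n)$, which is a proper subcategory of $\mod(\La',k)$ in general (e.g.\ simples are missing). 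You use the correct restricted statement at the end, so the slip is harmless, but the earlier sentence should be weakened to ``a fully faithful exact functor.'' Second, you are right to flag the indexing: as printed, $\De'_i=\Hom_\A(T,\nabla_{n-i})$ gives the undefined $\nabla_0$ at $i=n$; it should read $\De'_i=\Hom_\A(T,\nabla_{n+1-i})$ (equivalently, index the $\De'_i$ from $0$ to $n-1$). A marginally shorter route, equivalent in content, is to verify Definition~\ref{de:khwt} for $\mod(\La',k)$ directly: applying the exact functor $\Hom_\A(T,-)$ to the sequences $0\to V_i\to T_i\to\nabla_i\to 0$ of Proposition~\ref{pr:tilting} produces the required sequences $0\to U'_i\to P'_i\to\De'_i\to 0$ with $\bigoplus P'_i\cong\La'$ a projective generator, while conditions (1) and (2) transfer from the $\nabla$'s by full faithfulness. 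Both routes rest on exactly Proposition~\ref{pr:tilting}, Corollary~\ref{co:Ext-orth}, and the order-reversal.
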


The highest weight category $\mod(\La',k)$ in
Corollary~\ref{co:tilting} is called the \emph{Ringel dual} of $\A$.
If $\A\xto{\sim}\mod(\La,k)$ for some $k$-split quasi-hereditary
algebra $\La$, then the quasi-hereditary algebra $\La'$ is called the
\emph{Ringel dual} of $\La$; it is unique only up to Morita
equivalence.

\begin{prop}
  Let $\La$ be a $k$-split quasi-hereditary algebra. The double Ringel
  dual $\La''=(\La')'$ is Morita equivalent to $\La$. The equivalence
  identifies the standard modules over $\La''$ and $\La$.
\end{prop}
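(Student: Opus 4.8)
The plan is to build on Corollary~\ref{co:tilting}, which already computes the Ringel dual $\La'$ and its standard objects, and then apply that same corollary a second time to $\La'$, carefully tracking how the constructions unwind. Write $\B=\mod(\La',k)$ for the Ringel dual, with standard objects $\De'_i=\Hom_\A(T,\nabla_{n-i})$ and costandard objects $\nabla'_i$; here $T$ is the fixed characteristic tilting object of $\A$, i.e.\ a projective generator of $\Filt(\nabla)$, which by Corollary~\ref{co:tilting-characterisation} is also the injective cogenerator of $\Filt^\oplus(\De)$ and satisfies $\Filt^\oplus(\De)\cap\Filt^\oplus(\nabla)=\add T$. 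The key observation to establish first is that the equivalence \eqref{eq:tilting}, $\Hom_\A(T,-)\colon\Filt(\nabla)\xto{\sim}\Filt(\De')$, sends $T$ itself to $\Hom_\A(T,T)=\La'$, and more precisely carries the pair $(\Filt^\oplus(\De),\Filt^\oplus(\nabla))$ for $\A$ to the pair $(\Filt^\oplus(\nabla'),\Filt^\oplus(\De'))$ for $\B$ — in other words, $T$ plays for $\B$ the role that $T$ played for $\A$, but with the roles of standard and costandard interchanged. Indeed $\La'=\Hom_\A(T,T)$ lies in $\Filt(\De')$ since $T\in\Filt(\nabla)$, so $T\in\Filt^\oplus(\De)$ corresponds under $\Hom_\A(T,-)$ to a projective generator of $\B$, hence $T$ (viewed through the equivalence) is the injective cogenerator of $\Filt^\oplus(\De')$; by Corollary~\ref{co:tilting-characterisation} applied to $\B$, this means $T$ represents a projective generator of $\Filt^\oplus(\nabla')$, i.e.\ it is a characteristic tilting object of $\B$.

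Granting this, the second step is a direct computation of $\La''$: by definition $\La''=\End_\B(T_\B)$ where $T_\B$ is a characteristic tilting object of $\B$, and we have just identified such an object with $T$ itself (via the equivalence $\Filt(\nabla)\xto{\sim}\Filt(\De')$, which is fully faithful and exact). Hence $\End_\B(T_\B)\cong\End_\A(T)^{?}$ — one must check the variance carefully — and since the relevant equivalence is $k$-linear and fully faithful, we get $\La''\cong\End_\A(T)$-related data. The cleanest route is: $\Hom_\B(T_\B,-)$ gives an equivalence $\Filt(\nabla'_\B)\xto{\sim}\Filt(\De'')$, and composing the chain of equivalences $\Filt(\De)\text{-side of }\A \;\leftrightarrow\; \Filt(\nabla)\text{-side of }\B \;\leftrightarrow\; \dots$ shows that $\mod(\La'',k)$ is equivalent, as a highest weight category, to $\mod(\La,k)$ — where the role of the projective generator $\bigoplus_i P_i$ of $\A$ reappears. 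Concretely, I would show the composite functor $\A\supseteq\Filt(\De)\xto{\sim}\Filt(\nabla')\subseteq\B$, then $\Filt(\nabla')\xto{\sim}\Filt(\De'')\subseteq\mod(\La'',k)$, sends each $\De_i$ to $\De''_i$, and then invoke Theorem~\ref{th:standard} (the conditions (1)--(4) are inherited because they hold for $\A$ and the functors are exact $k$-linear equivalences onto $\Filt$-subcategories) to conclude that $\mod(\La'',k)$ and $\mod(\La,k)$ are equivalent highest weight categories; a Morita equivalence of the split quasi-hereditary algebras follows since both are endomorphism rings of projective generators matched up by this equivalence.

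The main obstacle I expect is bookkeeping of the index reversals and the op-algebra subtleties. Each application of Ringel duality introduces a reindexing $i\mapsto n-i$ (so two applications should compose to the identity on indices, but this needs to be verified rather than assumed) and also potentially an $\op$ on the endomorphism ring depending on how $\Hom_\A(T,-)$ versus $\Hom_\A(-,T)$ is used; getting these to cancel so that $\De''_i$ genuinely matches $\De_i$ (and not $\De_i$ of $\A^\op$, or $\De_{n+1-i}$) is where the real care is needed. A secondary subtlety is confirming that the characteristic tilting object of $\B$ really is (the image of) $T$ and is unique up to $\add$ — this rests on Corollary~\ref{co:tilting-characterisation}(3), which characterises it intrinsically as $\Filt^\oplus(\De')\cap\Filt^\oplus(\nabla')$, so once I show the equivalence \eqref{eq:tilting} swaps the two $\Filt^\oplus$-subcategories the uniqueness is automatic. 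Everything else — exactness, $k$-linearity, finite generation and projectivity over $k$ of the Hom-modules — is supplied verbatim by Corollaries~\ref{co:Ext-orth} and~\ref{co:tilting} and Proposition~\ref{pr:Ext-orth}.
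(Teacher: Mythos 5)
Your key observation --- that the equivalence $\Hom_\A(T,-)\colon\Filt(\nabla)\xto{\sim}\Filt(\De')$ carries the pair $(\Filt^\oplus(\De),\Filt^\oplus(\nabla))$ of $\A$ to $(\Filt^\oplus(\nabla'),\Filt^\oplus(\De'))$ of $\B$, with the image of $T$ serving as characteristic tilting object of $\B$ --- is false, and the rest of the argument leans on it. First, the claim does not type-check: only the restriction of $\Hom_\A(T,-)$ to $\Filt(\nabla)$ is an equivalence, and $\Filt^\oplus(\De)$ is not a subcategory of $\Filt(\nabla)$. Second, the object $\Hom_\A(T,T)=\La'$ is the \emph{projective generator} of $\B$, whereas the characteristic tilting object of $\B$ is the \emph{injective cogenerator} of $\Filt^\oplus(\De')$; these are different objects whenever $\B$ is not degenerate, and conflating them would give $\La''=\End_\B(\La')\cong\La'$ rather than $\La$ --- your hedged ``$\End_\A(T)^{?}$'' is a symptom of exactly this mismatch. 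The correct identification is that $\Hom_\A(T,-)$ sends the injective cogenerator $\Hom_k(\La,k)$ of $\Filt^\oplus(\nabla)$ to $\Hom_\A(T,\Hom_k(\La,k))\cong\Hom_k(T,k)$, which is therefore the characteristic tilting module over $\La'$ (this is precisely Proposition~\ref{pr:ringel-dual}, proved later). Your ``cleanest route'' carries the same flaw, since the alleged equivalence $\Filt(\De)\xto{\sim}\Filt(\nabla')$ is the same mistaken role-swap.

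The paper's proof avoids pinning down the characteristic tilting object of $\B$ altogether. It concatenates four exact equivalences
\[
\Filt(\De'')\xto{\sim}\Filt(\nabla')\xto{\sim}\Filt(\De')^\op\xto{\sim}\Filt(\nabla)^\op\xto{\sim}\Filt(\De),
\]
the outer pair being instances of the Ringel equivalence~\eqref{eq:tilting} and the inner pair instances of the duality~\eqref{eq:duality}. Because the duality appears twice, the passes to opposite categories cancel, and so do the two index reversals, so the composite preserves the ordering of standard objects with no bookkeeping by hand. Restricting the composite exact equivalence to the full subcategories of projective objects gives $\proj\La''\xto{\sim}\proj\La$, which is the desired Morita equivalence preserving standard modules. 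If you prefer to repair your own line of argument, replace ``$T$'' by ``$\Hom_k(T,k)$'' as the characteristic tilting over $\La'$ and then compute $\End_{\La'}(\Hom_k(T,k))\cong\End_\La(\Hom_k(\La,k))\cong\La$ via full faithfulness of $\Hom_\A(T,-)$ on $\Filt(\nabla)$.
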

\begin{proof}
We have equivalences
\[\Filt(\De'')\xto{\sim}\Filt(\nabla')\xto{\sim}\Filt(\De')^\op
\xto{\sim}\Filt(\nabla)^\op\xto{\sim}\Filt(\De)\]
of exact categories. Restricting this equivalence to the full
subcategories of projective objects yields an equivalence
$\proj\La''\xto{\sim}\proj\La$.
\end{proof}

Recall that an object $T$ of an exact category $\A$ is a \emph{tilting
  object} if $\Ext_\A^p(T,T)=0$ for $p>0$ and $\bfD^b(\A)$ admits no
proper thick subcategory containing $T$. An equivalent statement is
that $\RHom_\A(T,-)$ induces a triangle equivalence
$\bfD^b(\A)\xto{\sim}\bfD^\perf(\End_\A(T))$. In that case a
quasi-inverse is denoted by $-\Lotimes_{\End_\A(T)} T$.

\begin{cor}\label{co:tilt}
  Let $\A$ be a $k$-linear highest weight category $\A$. Then a
  projective generator of $\Filt(\nabla)$ is a
  tilting object of $\A$.
\end{cor}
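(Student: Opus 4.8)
The plan is to verify the two defining conditions of a tilting object for $T$, a projective generator of $\Filt(\nabla)$: first that $\Ext^p_\A(T,T)=0$ for all $p>0$, and second that $\bfD^b(\A)$ admits no proper thick subcategory containing $T$. The first condition is essentially free: by Proposition~\ref{pr:tilting}, $T$ lies in $\Filt^\oplus(\De)$, and of course $T\in\Filt^\oplus(\nabla)$ by construction, so Corollary~\ref{co:Ext-orth} gives $\Ext^p_\A(T,T)=0$ for $p>0$ immediately. So the content of the proof lies entirely in the generation statement.

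For the generation statement I would argue that the thick subcategory of $\bfD^b(\A)$ generated by $T$ contains every standard object $\De_i$, hence contains $\proj\La$ (a projective generator being $\Filt(\De)$-filtered, so built from the $\De_i$ in finitely many extensions by Lemma~\ref{le:standard} or the colocalisation sequence \eqref{eq:standard-coloc}), and therefore is all of $\bfD^b(\A)$ by Corollary~\ref{co:kgldim} (or Proposition~\ref{pr:kgldim}). To see that each $\De_i$ lies in $\Thick(T)$, I would use Corollary~\ref{co:tilting}: $\Hom_\A(T,-)$ induces an exact equivalence $\Filt(\nabla)\xto{\sim}\Filt(\De')$ onto the standard objects of the Ringel dual $\La'=\End_\A(T)$, under which $T$ corresponds to a projective generator of $\Filt(\De')$, i.e.\ (by Corollary~\ref{co:kgldim} applied to $\mod(\La',k)$) to a generator $\La'$ of $\bfD^\perf(\La')\xto{\sim}\bfD^b(\Filt^\oplus(\nabla))$. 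Thus $\Thick_{\bfD^b(\A)}(T)\supseteq \bfD^b(\Filt^\oplus(\nabla))$, which in turn contains all the costandard objects $\nabla_j$.

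It then remains to pass from the $\nabla_j$ back to the $\De_i$ inside $\Thick(T)$. Here I would run an induction on $n$ using the recollement data: $\nabla_n$ is injective, so $\Thick(T)$ contains an injective cogenerator of $\A$, and dually to the argument in Corollary~\ref{co:kgldim} the thick subcategory generated by a full set of costandard objects (or equivalently by $\Filt^\oplus(\nabla)$) is already all of $\bfD^b(\A)$ --- this is literally the last assertion of Corollary~\ref{co:kgldim}, that $\bfD^b(\Filt^\oplus(\nabla_1,\ldots,\nabla_n))\xto{\sim}\bfD^b(\A)$. Combining this with the previous paragraph, $\Thick(T)=\bfD^b(\A)$, which is exactly the statement that $T$ is a tilting object; equivalently $\RHom_\A(T,-)$ is a triangle equivalence $\bfD^b(\A)\xto{\sim}\bfD^\perf(\La')$.

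The main obstacle I anticipate is bookkeeping rather than mathematics: one must be careful that the equivalence $\Filt(\nabla)\xto{\sim}\Filt(\De')$ of Corollary~\ref{co:tilting} genuinely extends to an equivalence of bounded derived categories of the idempotent completions in a way compatible with $\Thick(T)$ --- this is where Corollary~\ref{co:kgldim} is doing the real work, since it identifies $\bfD^b(\Filt^\oplus(\nabla))$ with $\bfD^\perf(\La')$ through the tilting-type functor $\Hom_\A(T,-)$. Once that identification is in hand, everything else is formal: $T\mapsto\La'$ generates, so $T$ generates $\bfD^b(\Filt^\oplus(\nabla))$, which by the dual half of Corollary~\ref{co:kgldim} generates $\bfD^b(\A)$.
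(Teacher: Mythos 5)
Your proposal is correct and follows essentially the same route as the paper, which simply chains the fully faithful exact functors $\proj\La'\xto{\sim}\add T\to\Filt^\oplus(\nabla)\to\A$ through Corollary~\ref{co:kgldim} to obtain the triangle equivalence $\bfD^\perf(\La')\xto{\sim}\bfD^b(\Filt^\oplus(\nabla))\xto{\sim}\bfD^b(\A)$ quasi-inverse to $\RHom_\A(T,-)$. The detour in your third paragraph about passing from the $\nabla_j$ back to the $\De_i$ is unnecessary, as you recognise midway: once $\Thick(T)$ is seen to contain the image of $\bfD^b(\Filt^\oplus(\nabla))$, the second assertion of Corollary~\ref{co:kgldim} finishes the job directly.
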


\begin{proof}
  Fix a projective generator $T$ and set $\La'=\End_\A(T)$.  Then the
  sequence of fully faithful exact functors
\[\proj\La'\xto{\sim}\add T\to \Filt^\oplus(\nabla)\to \A\]
induces a triangle equivalence
\[\bfD^\perf(\La')\xto{\sim}\bfD^b(\Filt^\oplus(\nabla))
\xto{\sim}\bfD^b(\A)\]
which is a quasi-inverse of $\RHom_\A(T,-)$; this follows
from Corollary~\ref{co:kgldim}.
\end{proof}

For a $k$-linear highest weight category $\A$ an object $T$ satisfying
the equivalent conditions in
Corollary~\ref{co:tilting-characterisation} is called
\emph{characteristic tilting object}.

\subsection*{Tor-orthogonality}

Ext-orthogonality for modules over a quasi-hereditary algebra
translates into Tor-orthogonality. To see this we need to recall some
standard isomorphisms for derived functors.

\begin{lem}\label{le:derived}
Let $\La$ be a $k$-algebra and $X,Y$ be complexes of
$\La$-modules. Then there are natural morphisms
\begin{gather*}
X\Lotimes_\La\RHom_k(Y,k)\lto
\RHom_k(\RHom_\La(X,Y),k)\\
Y\Lotimes_\La\RHom(X,\La)\lto \RHom_\La(X,Y)
\end{gather*}
which are isomorphisms when $X$ is perfect.\qed
\end{lem}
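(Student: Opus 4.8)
The plan is to construct the two morphisms directly from their standard "lax monoidal" origins and then reduce the isomorphism claims to the case $X=\La$ by a thick-subcategory argument. For the first morphism, recall that for any complex $Z$ of $k$-modules there is a natural evaluation map $X\otimes_\La\Hom_k(Z,k)\to\Hom_k(\Hom_\La(X,Z),k)$, obtained by sending $x\otimes\phi$ to the functional $f\mapsto\phi(f(x))$; this is visibly $\La$-balanced on the left and lands in the $k$-linear dual of the $\La$-module $\Hom_\La(X,Z)$. Deriving this in both variables (replacing $X$ by a complex of projectives, $Z=Y$ by an injective resolution over $k$, and noting $\Hom_k(-,k)$ is exact on $k$-projective complexes since $k$ is the base) yields the first morphism of the lemma. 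For the second, one uses the natural composition/evaluation map $Y\otimes_\La\Hom_\La(X,\La)\to\Hom_\La(X,Y)$ sending $y\otimes g$ to $x\mapsto y\cdot g(x)$, and derives it by taking a projective resolution of $X$.

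The key step is then to check that both maps are isomorphisms when $X$ is perfect. I would argue that for fixed $Y$ the full subcategory of $\bfD^\perf(\La)$ consisting of those $X$ for which the relevant map is an isomorphism is a thick subcategory: it is closed under shifts, cones (by the five lemma applied to the long exact sequences of the source and target functors, both of which are exact triangulated functors in $X$), and direct summands. Hence it suffices to verify the isomorphism for $X=\La$, and this case is immediate: for the first map it is the identity $\Hom_k(Y,k)\xto{\sim}\Hom_k(Y,k)$, and for the second it is the canonical isomorphism $Y\otimes_\La\La\xto{\sim}\Hom_\La(\La,Y)$. Since $\bfD^\perf(\La)$ is by definition the thick subcategory generated by $\La$, this completes the argument.

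The main obstacle is purely bookkeeping: making the derived-functor constructions of the two morphisms genuinely natural and compatible with the triangulated structure in $X$, so that the five-lemma step is legitimate. Concretely, one must choose the resolutions functorially (e.g. via bar-type or model-categorical replacements) and check that the evaluation maps above are compatible with the connecting morphisms; none of this is deep, but it is the only place where care is needed. Everything else — exactness of $\Hom_k(-,k)$ on $k$-projective complexes, which holds because these notes only use $\mod(\La,k)$ with $\La$ finitely generated projective over $k$, and the reduction to $X=\La$ — is formal. Since the statement is flagged in the excerpt with \qed and used only as a tool in the Tor-orthogonality discussion that follows, I would keep the write-up terse, citing the thick-subcategory reduction as the one substantive point.
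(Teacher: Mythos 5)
The paper states this lemma with a terminal \qed and provides no proof, treating it as a standard fact about derived functors; so there is no proof in the paper to compare against. Your argument is the correct standard one — construct the evaluation/composition morphisms at the chain level, pass to derived functors, observe that for fixed $Y$ the source and target are triangulated functors of $X$ so that the class of $X$ for which the natural map is invertible is thick, and note that $X=\La$ trivially lies in that class, which generates $\bfD^\perf(\La)$. One small imprecision: in your derived-functor discussion you suggest taking a $k$-injective resolution of $Y$, but what is actually needed is to resolve the copy of $k$ serving as the dualising object (or simply to observe that once $X$ is replaced by a bounded complex of $\La$-projectives, the evaluation map is defined between the already-correct models); this does not affect the conclusion, and as you rightly say the entire content here is bookkeeping. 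The thick-subcategory reduction is exactly the substantive point.
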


\begin{prop}\label{pr:tor}
Let $\La$ be a $k$-split quasi-hereditary algebra. 
For \[X\in\Filt^\oplus(\De)\subseteq\mod(\La,k)\qquad
\text{and}\qquad Y\in\Filt^\oplus(\De)\subseteq\mod(\La^\op,k)\]
we have
\[\Tor_p^\La(X,Y)=0\quad\text{for}\quad p>0.\] 
\end{prop}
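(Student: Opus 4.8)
The plan is to reduce the vanishing of $\Tor^\La_p(X,Y)$ to the Ext-orthogonality already established in Corollary~\ref{co:Ext-orth}, using the duality $\Hom_k(-,k)$ together with the second isomorphism in Lemma~\ref{le:derived}. First I would observe that since $X\in\Filt^\oplus(\De)\subseteq\mod(\La,k)$ has finite projective dimension over $\La$ (by Remark~\ref{re:kgldim}, since $\A=\mod(\La,k)$ is a highest weight category and $X$ lives in it), the object $X$ is a perfect complex over $\La$. Hence the natural morphism
\[
Y\Lotimes_\La\RHom_\La(X,\La)\lto \RHom_\La(X,Y)
\]
from Lemma~\ref{le:derived} is an isomorphism. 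Actually the cleaner route is the \emph{first} isomorphism: for $X$ perfect we have
\[
X\Lotimes_\La\RHom_k(Y,k)\xto{\sim}\RHom_k(\RHom_\La(X,Y),k).
\]

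Next I would unwind the left-hand side. The duality $\Hom_k(-,k)$ restricts to an exact equivalence $\mod(\La^\op,k)\xto{\sim}\mod(\La,k)^\op$, and by the analogue of the equivalence \eqref{eq:duality} for $\La^\op$ it sends $\Filt^\oplus(\De)\subseteq\mod(\La^\op,k)$ to $\Filt^\oplus(\nabla)\subseteq\mod(\La,k)$; concretely, the standard object $\De_i$ over $\La^\op$ is $\Hom_k(\nabla_i,k)$, so $\Hom_k(\De_i,k)\cong\nabla_i$ as $\La$-modules. Since $Y$ is finitely generated projective over $k$, we have $\RHom_k(Y,k)=\Hom_k(Y,k)$ concentrated in degree $0$, and $\Hom_k(Y,k)=:Y^\vee$ lies in $\Filt^\oplus(\nabla)\subseteq\mod(\La,k)$. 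Therefore the left-hand side is $X\Lotimes_\La Y^\vee$ with $X\in\Filt^\oplus(\De)$ and $Y^\vee\in\Filt^\oplus(\nabla)$, and its homology in degree $p$ is $\Tor^\La_p(X,Y^\vee)$; moreover by a standard adjunction $\Tor^\La_p(X,Y^\vee)\cong\Hom_k(\Tor^\La_p(X,Y),k)$ — or one simply tracks that the two sides of the displayed isomorphism compute $\Tor^\La_*(X,Y)$ and $\Ext^*_\La(X,Y^\vee)$ respectively after applying $k$-duality, using that $k$-duality on a finitely generated projective $k$-module is exact and faithful.

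The right-hand side is $\RHom_k(\RHom_\La(X,Y),k)$. Here I would compute $\RHom_\La(X,Y)$: since $X\in\Filt^\oplus(\De)$ and we want to pair it against something in $\Filt^\oplus(\nabla)$, note that $Y\in\mod(\La^\op,k)$, not $\mod(\La,k)$, so this is $\RHom_{\La^\op}$ applied after the equivalence, or one works directly over $\La$ with $\Hom_k(Y,k)=Y^\vee\in\Filt^\oplus(\nabla)\subseteq\mod(\La,k)$ and uses $\RHom_\La(X,Y^\vee)$. By Corollary~\ref{co:Ext-orth}, $\Ext^p_\La(X,Y^\vee)=0$ for all $p>0$, so $\RHom_\La(X,Y^\vee)$ is concentrated in degree $0$ and equals the finitely generated projective $k$-module $\Hom_\La(X,Y^\vee)$. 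Applying $\RHom_k(-,k)$ to a module concentrated in degree $0$ that is finitely generated projective over $k$ gives again a module concentrated in degree $0$. Hence the right-hand side has homology only in degree $0$, so by the isomorphism $X\Lotimes_\La Y^\vee$ has homology only in degree $0$, i.e.\ $\Tor^\La_p(X,Y^\vee)=0$ for $p>0$. Finally, translating back through the $k$-duality (which is faithfully exact on finitely generated projective $k$-modules) and the identification $Y^\vee\leftrightarrow Y$ gives $\Tor^\La_p(X,Y)=0$ for $p>0$, as claimed.

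The main obstacle I anticipate is bookkeeping with the sidedness: $X$ is a right $\La$-module, $Y$ is a right $\La^\op$-module (equivalently a left $\La$-module), and $\Tor^\La_p(X,Y)$ pairs these two, whereas Corollary~\ref{co:Ext-orth} is stated for $\Filt^\oplus(\De)$ and $\Filt^\oplus(\nabla)$ inside a \emph{single} category $\mod(\La,k)$. The $k$-duality $\Hom_k(-,k)\colon\mod(\La^\op,k)\xto{\sim}\mod(\La,k)^\op$ is exactly what converts $Y\in\Filt^\oplus(\De)\subseteq\mod(\La^\op,k)$ into $Y^\vee\in\Filt^\oplus(\nabla)\subseteq\mod(\La,k)$, making Corollary~\ref{co:Ext-orth} applicable; one must check this identification sends $\De$-filtered objects on the opposite side to $\nabla$-filtered objects, which follows from the definition of the standard objects over $\La^\op$ as $k$-duals of the $\nabla_i$. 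The rest is the formal manipulation of Lemma~\ref{le:derived} together with the perfectness of $X$ guaranteed by Remark~\ref{re:kgldim}.
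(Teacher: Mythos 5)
Your proposal follows the same route as the paper: apply the first isomorphism of Lemma~\ref{le:derived} (valid because $X$ is perfect over $\La$, which you correctly justify via Remark~\ref{re:kgldim}), pass $Y$ through the $k$-duality to land in $\Filt^\oplus(\nabla)\subseteq\mod(\La,k)$, and invoke Corollary~\ref{co:Ext-orth} to see that $\RHom_\La(X,Y^\vee)$ is concentrated in degree~$0$. One small bookkeeping slip: after feeding $X$ and $Y^\vee$ into Lemma~\ref{le:derived}, the left-hand side is $X\Lotimes_\La\RHom_k(Y^\vee,k)\cong X\Lotimes_\La Y$ (not $X\Lotimes_\La Y^\vee$, which does not typecheck since both are right $\La$-modules), so the conclusion $\Tor^\La_p(X,Y)=0$ for $p>0$ drops out directly without the extra ``translating back through $k$-duality'' step and without the claimed identity $\Tor^\La_p(X,Y^\vee)\cong\Hom_k(\Tor^\La_p(X,Y),k)$; this cleans up the end of your argument but does not change its substance.
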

\begin{proof}
  This follows from Corollary~\ref{co:Ext-orth} with the first
  isomorphism in Lemma~\ref{le:derived}, since $\Hom_k(-k)$ induces an
  equivalence $\Filt^\oplus(\nabla)\xto{\sim}\Filt^\oplus(\De)$; see
  \eqref{eq:duality}.
\end{proof}

\subsection*{Serre duality}

Let $\La$ be a $k$-algebra that is finitely generated projective over
$k$. Then the $\La$-module $\Hom_k(\La,k)$ is an injective cogenerator
of $\mod(\La,k)$ and plays the role of a dualising complex.

\begin{lem}\label{le:serre}
Suppose that the $\La$-module $\Hom_k(\La,k)$  has finite
projective dimension. Then
\[F=-\Lotimes_\La\Hom_k(\La,k)\colon\bfD^\perf(\La)\lto\bfD^\perf(\La)\]
is a \emph{Serre functor} in the sense that $F$ is a triangle
equivalence and 
\[\RHom_k(\RHom_\La(X,-),k)\cong\RHom_\La(-,F(X))\quad\text{for}\quad
X\in\bfD^\perf(\La).\]
\end{lem}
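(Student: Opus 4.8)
The plan is to verify the two defining properties of a Serre functor for $F=-\Lotimes_\La\Hom_k(\La,k)$: that $F$ is a triangle equivalence on $\bfD^\perf(\La)$, and that it implements the required duality isomorphism. The essential ingredient is the second natural morphism of Lemma~\ref{le:derived}, applied with $Y=\Hom_k(\La,k)$, which for perfect $X$ gives an isomorphism
\[
\Hom_k(\La,k)\Lotimes_\La\RHom_\La(X,\La)\xto{\sim}\RHom_\La(X,\Hom_k(\La,k)).
\]
Combining this with the standard adjunction $\RHom_\La(X,\Hom_k(\La,k))\cong\RHom_k(X,k)$ (valid since $\La$ is finitely generated projective over $k$) will be the computational heart: it identifies $\RHom_\La(-,F(X))$ with a double-dual expression. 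I expect the bookkeeping with these canonical isomorphisms — in particular getting them to be natural and mutually compatible — to be the main obstacle, though each individual step is formal.

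First I would check that $F$ is a triangle equivalence. The functor $-\Lotimes_\La\Hom_k(\La,k)$ is exact (a derived tensor product) and commutes with coproducts, and it preserves perfect complexes precisely because $\Hom_k(\La,k)$ has finite projective dimension as a $\La$-module (the hypothesis): a bounded projective resolution of $\Hom_k(\La,k)$ lets one compute $X\Lotimes_\La\Hom_k(\La,k)$ as a bounded complex of finitely generated projectives when $X$ is perfect. For a quasi-inverse, note that $\Hom_k(\La,k)$ is a tilting complex: it is an invertible object in the derived category of $\La$-$\La$-bimodules, with inverse $\RHom_k(\Hom_k(\La,k),k)\cong\RHom_\La(\Hom_k(\La,k),\La)$ by Lemma~\ref{le:derived}. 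Hence $-\Lotimes_\La\RHom_\La(\Hom_k(\La,k),\La)$ is a quasi-inverse.

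Next I would establish the Serre duality isomorphism. Fix $X\in\bfD^\perf(\La)$. Using the first natural morphism of Lemma~\ref{le:derived} with $Y=X$ (so that $\RHom_k(\RHom_\La(X,X'),k)$ appears naturally, but I actually want a more direct route), I instead argue as follows. We have
\[
\RHom_\La(Y,F(X))=\RHom_\La\bigl(Y,X\Lotimes_\La\Hom_k(\La,k)\bigr),
\]
and since $X$ is perfect one may bring the perfect complex outside using $X\Lotimes_\La\Hom_k(\La,k)\cong\RHom_\La(\RHom_\La(X,\La),\Hom_k(\La,k))$; then tensor-hom adjunction and the identification $\RHom_\La(-,\Hom_k(\La,k))\cong\RHom_k(-,k)$ collapse the expression to $\RHom_k(\RHom_\La(X,\La)\Lotimes_\La Y,\,k)$. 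On the other side, $\RHom_k(\RHom_\La(X,Y),k)$: again using perfectness of $X$, $\RHom_\La(X,Y)\cong \RHom_\La(X,\La)\Lotimes_\La Y$ by the second morphism of Lemma~\ref{le:derived}, so the two sides agree. All isomorphisms are natural in $X$ and $Y$, so this yields the claimed functorial isomorphism $\RHom_k(\RHom_\La(X,-),k)\cong\RHom_\La(-,F(X))$.

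Finally, I would remark that the duality $\Hom_k(-,k)$ on $k$-modules is exact on $\proj k$ but not in general; however all complexes in sight are perfect over $\La$ hence perfect over $k$, so $\RHom_k(-,k)$ is well-behaved and the double-dual map $M\to\RHom_k(\RHom_k(M,k),k)$ is an isomorphism on $\bfD^\perf(k)$. This last point is what makes the argument go through over an arbitrary commutative base ring rather than a field, and it is where I would be most careful to invoke only the finite-rank-projectivity hypotheses already in force.
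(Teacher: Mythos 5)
Your derivation of the duality isomorphism $\RHom_k(\RHom_\La(X,-),k)\cong\RHom_\La(-,F(X))$ is essentially the paper's argument: both use the second isomorphism of Lemma~\ref{le:derived} to rewrite $\RHom_\La(X,-)$ as $-\Lotimes_\La\RHom_\La(X,\La)$, then tensor--hom adjunction against $\RHom_k(-,k)$, then the first isomorphism of Lemma~\ref{le:derived} (with $Y=\La$) to recognise $\RHom_k(\RHom_\La(X,\La),k)\cong X\Lotimes_\La\Hom_k(\La,k)$. Your bookkeeping of left/right $\La$-actions is a bit loose (e.g.\ $\RHom_\La(X,\La)\Lotimes_\La Y$ should read $Y\Lotimes_\La\RHom_\La(X,\La)$), but the route is the same.

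The genuine problem is in the equivalence half. You assert that $\Hom_k(\La,k)$ is an invertible object in the derived category of bimodules ``with inverse $\RHom_k(\Hom_k(\La,k),k)\cong\RHom_\La(\Hom_k(\La,k),\La)$ by Lemma~\ref{le:derived}.'' Neither isomorphism in Lemma~\ref{le:derived} gives this, and the claimed isomorphism is false in general: since $\La$ is finitely generated projective over $k$, the double $k$-dual gives a canonical bimodule isomorphism $\RHom_k(\Hom_k(\La,k),k)\cong\La$, so your identity would force $\RHom_\La(\Hom_k(\La,k),\La)\cong\La$ and hence $F\cong\id$ — which is absurd unless $\La$ is symmetric. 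Your final formula $-\Lotimes_\La\RHom_\La(\Hom_k(\La,k),\La)$ for the quasi-inverse is correct (it agrees with the paper's $\RHom_\La(\Hom_k(\La,k),-)$ by the second isomorphism of Lemma~\ref{le:derived}), but the justification offered for it does not hold, and the intermediate claim that $\Hom_k(\La,k)$ is an invertible bimodule is itself an unproved assertion. The cleaner route, and the one the paper's chain of isomorphisms secretly encodes, is to observe that the first isomorphism of Lemma~\ref{le:derived} (with $Y=\La$) gives $F(X)\cong\RHom_k(\RHom_\La(X,\La),k)$, exhibiting $F$ as the composite of the two contravariant equivalences $\RHom_\La(-,\La)$ and $\RHom_k(-,k)$ on perfect complexes (the latter using that $\La$ is finitely generated projective over $k$, together with your finite-projective-dimension hypothesis to stay within $\bfD^\perf$); this makes $F$ an equivalence with no appeal to invertibility of the bimodule, and the adjunction $\RHom_\La(-\Lotimes_\La\Hom_k(\La,k),Z)\cong\RHom_\La(-,\RHom_\La(\Hom_k(\La,k),Z))$ then identifies $\RHom_\La(\Hom_k(\La,k),-)$ as the quasi-inverse.
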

\begin{proof}
  Using the standard isomorphisms from Lemma~\ref{le:derived} we have
\begin{align*}
\RHom_k(\RHom_\La(X,-),k) &\cong \RHom_k(-\Lotimes_\La\RHom_\La(X,\La),k)\\
&\cong \RHom_\La(-,\RHom_k(\RHom_\La(X,\La),k))\\
&\cong \RHom_\La(-,X\Lotimes_\La\Hom_k(\La,k))
\end{align*}
and a quasi-inverse of $F$ is given by $\RHom_\La(\Hom_k(\La,k),-)$.
\end{proof}

\begin{prop}\label{pr:serre}
Let $\La$ be a $k$-split quasi-hereditary algebra. Then 
\[-\Lotimes_\La\Hom_k(\La,k)\colon\bfD^\perf(\La)\lto\bfD^\perf(\La)\]
is a Serre functor.
\end{prop}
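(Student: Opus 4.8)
The plan is to deduce this immediately from Lemma~\ref{le:serre}, whose only hypothesis is that the $\La$-module $\Hom_k(\La,k)$ has finite projective dimension. So the whole task reduces to verifying this finiteness for a $k$-split quasi-hereditary algebra $\La$. First I would recall from Remark~\ref{re:kgldim} (which follows from the triangle equivalence $\bfD^\perf(\La)\xto{\sim}\bfD^b(\A)$ of Proposition~\ref{pr:kgldim}) that \emph{every} object of $\A=\mod(\La,k)$ has finite projective dimension. Since $\La$ is finitely generated projective over $k$, the $\La$-module $\Hom_k(\La,k)$ lies in $\mod(\La,k)$: indeed it is the injective cogenerator of $\mod(\La,k)$ mentioned just before Lemma~\ref{le:serre}, and in particular it is finitely generated projective over $k$. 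Hence $\Hom_k(\La,k)$ has finite projective dimension as a $\La$-module.

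With the hypothesis of Lemma~\ref{le:serre} in hand, the conclusion is exactly the statement of the proposition: $F=-\Lotimes_\La\Hom_k(\La,k)$ is a Serre functor on $\bfD^\perf(\La)$. So the proof is essentially one line invoking Remark~\ref{re:kgldim} and Lemma~\ref{le:serre}.

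\begin{proof}
  Since $\La$ is finitely generated projective over $k$, the $\La$-module $\Hom_k(\La,k)$ belongs to $\mod(\La,k)=\A$; in fact it is the injective cogenerator of $\A$. By Remark~\ref{re:kgldim} every object of $\A$ has finite projective dimension, so in particular $\Hom_k(\La,k)$ has finite projective dimension over $\La$. Now apply Lemma~\ref{le:serre}.
\end{proof}

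The only conceivable obstacle is confirming that $\Hom_k(\La,k)$ genuinely lands in $\mod(\La,k)$ and that finite global-type dimension from Remark~\ref{re:kgldim} applies to it; both are immediate once one observes that $k$ finitely generated projective forces $\Hom_k(\La,k)$ to be finitely generated projective over $k$ as well, so this is routine rather than a real difficulty.
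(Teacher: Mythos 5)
Your proof is correct and is essentially the same as the paper's: both reduce the statement to Lemma~\ref{le:serre} by observing, via Remark~\ref{re:kgldim}, that $\Hom_k(\La,k)$ has finite projective dimension over $\La$. The extra sentence justifying that $\Hom_k(\La,k)$ lies in $\mod(\La,k)$ is a harmless elaboration of what the paper leaves implicit.
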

\begin{proof}
  Combine Lemma~\ref{le:serre} with the fact that $\Hom_k(\La,k)$ has
  finite projective dimension; see Remark~\ref{re:kgldim}.
\end{proof}

Serre duality and Ringel duality are closely related for a
quasi-hereditary algebra. The following proposition provides the first
step for explaining this.

\begin{prop}\label{pr:ringel-dual}
  Let $\La$ be a $k$-split quasi-hereditary algebra with
  characteristic tilting module $T$ and set $\Ga=\End_\La(T)$. Then
  $\Hom_k(T,k)$ is a characteristic tilting module over both $\Ga$ and
  $\La^\op$, with canonical isomorphisms
\[\End_{\Ga}(\Hom_k(T,k))\cong\La\qquad\text{and}\qquad \End_{\La^\op}(\Hom_k(T,k))\cong\Ga^\op.\] 
Moreover, $T$ is a  characteristic tilting module over $\Ga^\op$ with $\End_{\Ga^\op}(T)\cong\La^\op$.
\end{prop}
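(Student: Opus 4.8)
The plan is to push the data across two functors --- the $k$-linear duality $D=\Hom_k(-,k)$ and the Ringel-dual functor $\Hom_\La(T,-)$ --- and then to deduce the statement over $\Ga^\op$ by feeding $\La^\op$, together with its characteristic tilting module $D(T)$, back into the part already established. I may take $T$ to be the characteristic tilting module produced in Proposition~\ref{pr:tilting}, since the assertions depend on $T$ only through $\add T$ and on $\Ga$ only up to Morita equivalence.

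First I would handle $\La^\op$. By \eqref{eq:duality} and the description of $\mod(\La^\op,k)$ as a highest weight category, $D$ interchanges standard and costandard objects, hence carries the subcategories $\Filt^\oplus(\De)$ and $\Filt^\oplus(\nabla)$ of $\mod(\La,k)$ to the subcategories $\Filt^\oplus(\nabla)$ and $\Filt^\oplus(\De)$ of $\mod(\La^\op,k)$. Since $T$ is characteristic tilting, $\add T=\Filt^\oplus(\De)\cap\Filt^\oplus(\nabla)$ in $\mod(\La,k)$ by Corollary~\ref{co:tilting-characterisation}; applying the anti-equivalence $D$ yields $\add D(T)=\Filt^\oplus(\De)\cap\Filt^\oplus(\nabla)$ in $\mod(\La^\op,k)$, so $D(T)$ is characteristic tilting over $\La^\op$ by the same corollary, and $D$ induces the canonical isomorphism $\End_{\La^\op}(D(T))\cong\End_\La(T)^\op=\Ga^\op$. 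In particular $\Ga^\op$ is the Ringel dual of $\La^\op$, with characteristic tilting module $D(T)$.

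Next I would handle $\Ga$. The tensor--hom adjunction gives a natural isomorphism of right $\Ga$-modules $D(T)=\Hom_k(T,k)\cong\Hom_\La(T,\Hom_k(\La,k))$. By Corollary~\ref{co:tilting}, passing to idempotent completions, $\Hom_\La(T,-)$ restricts to an exact equivalence $\Filt^\oplus(\nabla)\xto{\sim}\Filt^\oplus(\De')$ onto the category of $\De'$-filtered modules over $\Ga$, where $\De'_i=\Hom_\La(T,\nabla_{n-i})$. Now $\Hom_k(\La,k)$ is an injective cogenerator of $\mod(\La,k)$ lying in $\Filt^\oplus(\nabla)$, and $\Filt^\oplus(\nabla)$ is closed under cokernels of admissible monomorphisms (dual to the Remark after Proposition~\ref{pr:Ext-orth}); hence $\Hom_k(\La,k)$ is an injective cogenerator of the exact category $\Filt^\oplus(\nabla)$. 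Transporting along the equivalence, $D(T)\cong\Hom_\La(T,\Hom_k(\La,k))$ is an injective cogenerator of $\Filt^\oplus(\De')$, hence characteristic tilting over $\Ga$ by Corollary~\ref{co:tilting-characterisation}. Full faithfulness of $\Hom_\La(T,-)$ on $\Filt^\oplus(\nabla)$ and the duality on $\mod(\La,k)$ then give $\End_\Ga(D(T))\cong\End_\La(\Hom_k(\La,k))\cong\La$: indeed $\Hom_k(\La,k)=\Hom_k({}_\La\La,k)$, so its endomorphism ring is $(\End_\La({}_\La\La))^\op=(\La^\op)^\op=\La$, and not $\La^\op$. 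This yields the two displayed isomorphisms and the assertion for $\Hom_k(T,k)$.

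Finally, the last sentence follows by symmetry: applying the previous paragraph to $\La^\op$, $D(T)$, $\Ga^\op$ (legitimate by the first step) shows that $\Hom_k(D(T),k)$ is characteristic tilting over $\Ga^\op$ with $\End_{\Ga^\op}(\Hom_k(D(T),k))\cong\La^\op$, and $\Hom_k(D(T),k)\cong T$ by biduality since $T$ is finitely generated projective over $k$. The hard part, I expect, is not any single step but the sustained bookkeeping of left/right module structures and of the opposite-ring flips --- checking that $\Hom_k(T,k)\cong\Hom_\La(T,\Hom_k(\La,k))$ is equivariant for the correct right $\Ga$-action, that $\End_\La(\Hom_k(\La,k))$ is $\La$ rather than $\La^\op$, and that $\Hom_k(\La,k)$ remains an injective cogenerator after passing to the exact subcategory $\Filt^\oplus(\nabla)$. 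None of these is deep once the structures are pinned down.
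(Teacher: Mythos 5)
Your proposal is correct and essentially mirrors the paper's proof: both arguments push the injective cogenerator $\Hom_k(\La,k)$ across the Ringel equivalence $\Hom_\La(T,-)$ to identify $\Hom_k(T,k)$ as characteristic tilting over $\Ga$, both use the restriction of the $k$-duality $\Hom_k(-,k)$ (Corollary~\ref{co:tilting-characterisation} plus~\eqref{eq:duality}) to handle $\La^\op$, and both finish via biduality $T\cong\Hom_k(\Hom_k(T,k),k)$. The only difference is cosmetic — you treat $\La^\op$ before $\Ga$, whereas the paper does the reverse — and your attention to the left/right and opposite-ring bookkeeping is precisely the care the argument demands.
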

\begin{proof}
For an exact category $\A$ we write $\proj\A$ and $\inj\A$ to denote
  the full subcategories of projective and injective objects,
  respectively.

The equivalence \eqref{eq:tilting} given by
  $\Hom_\La(T,-)$ restricts to an equivalence
\[\inj(\Filt^\oplus(\nabla))\xto{\sim}\inj(\Filt^\oplus(\De))=\add
T\]
and sends $\Hom_k(\La,k)$ to 
\[\Hom_\La(T,\Hom_k(\La,k))\cong \Hom_k(T,k).\]
Thus $\Hom_k(T,k)$ is a characteristic tilting module over $\Ga$ with
\[\End_\Ga(\Hom_k(T,k))\cong \End_{\La}(\Hom_k(\La,k))\cong\La.\]
On the other hand, the equivalence \eqref{eq:duality} given by $\Hom_k(-,k)$
restricts to an equivalence
\[(\add T)^\op=\inj(\Filt^\oplus(\De))^\op\xto{\sim}\proj(\Filt^\oplus(\nabla)).\]
Thus $\Hom_k(T,k)$ is a characteristic tilting module over $\La^\op$ with
\[\End_{\La^\op}(\Hom_k(T,k))\cong \End_{\La}(T)^\op\cong\Ga^\op.\]
The assertion for the $\Ga^\op$-module $T$ now follows since
$T\cong\Hom_k(\Hom_k(T,k),k)$.
\end{proof}

\begin{thm}\label{th:ringel-dual}
  Let $\La$ be a $k$-split quasi-hereditary algebra with
 characteristic tilting module $T$ and set $\Ga=\End_\La(T)$. Then
 \[\Hom_k(T,k) \otimes_\Ga T\cong \Hom_k(T,k) \Lotimes_\Ga T\cong\Hom_k(\La,k)\]
 as $\La$-$\La$-bimodules. Therefore the composite
\[
\begin{tikzcd}
\bfD^\perf(\La)\arrow{rrr}{-\Lotimes_\La \Hom_k(T,k)}&&&\bfD^\perf(\Ga)
\arrow{rrr}{-\Lotimes_\Ga T}&&&\bfD^\perf(\La)
\end{tikzcd}
\]
is a Serre functor.
\end{thm}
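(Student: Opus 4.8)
The plan is to establish the bimodule isomorphism first and then deduce the statement about Serre functors by combining it with the results already proved. For the bimodule isomorphism, I would work inside the equivalence \eqref{eq:tilting} given by $\Hom_\La(T,-)$, together with the duality \eqref{eq:duality} given by $\Hom_k(-,k)$. The right $\Ga$-module structure on $\Hom_k(T,k)$ and the left $\Ga$-module structure on $T$ are the natural ones coming from $\Ga=\End_\La(T)$, and both carry compatible $\La$-actions on the outside, so $\Hom_k(T,k)\otimes_\Ga T$ is naturally a $\La$-$\La$-bimodule. First I would check that the derived tensor product agrees with the ordinary one; this follows from Proposition~\ref{pr:tor} (Tor-orthogonality) applied to $\Hom_k(T,k)$ and $T$, once one observes via \eqref{eq:duality} that $\Hom_k(T,k)\in\Filt^\oplus(\De)\subseteq\mod(\Ga^{\op},k)$ — here I use that $T$ is a characteristic tilting module over $\Ga^{\op}$ with $\End_{\Ga^{\op}}(T)\cong\La^{\op}$, exactly as recorded in Proposition~\ref{pr:ringel-dual} — while $T\in\Filt^\oplus(\De)\subseteq\mod(\Ga,k)$ since $\Hom_\La(T,-)$ sends the characteristic tilting object of $\Filt^\oplus(\nabla)$ to that of $\Filt^\oplus(\De)=\add T$ over $\Ga$ by Corollary~\ref{co:tilting-characterisation}.

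Next I would identify the underlying $\La$-$\La$-bimodule. The key computation is
\[
\Hom_k(T,k)\otimes_\Ga T\;\cong\;\Hom_\Ga\bigl(\Hom_k(T,k),\,\Hom_k(T,k)\bigr)^{\vee}\text{-type manipulation},
\]
but more directly: using that $T$ is finitely generated projective over $k$, one has a natural isomorphism $\Hom_k(T,k)\otimes_\Ga T\cong\Hom_\Ga\bigl(T,\Hom_k(T,k)\bigr)^{\ast}$, and the latter is computed by the equivalence $\Hom_\La(T,-)$. Concretely, $\Hom_\La(T,\Hom_k(\La,k))\cong\Hom_k(T,k)$ as right $\Ga$-modules (this appears in the proof of Proposition~\ref{pr:ringel-dual}), so applying the inverse of the equivalence \eqref{eq:tilting}, i.e.\ $-\otimes_\Ga T$, to $\Hom_k(T,k)$ recovers $\Hom_k(\La,k)$. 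I would make this precise by noting that for any $X\in\Filt^\oplus(\nabla)$ the counit $\Hom_\La(T,X)\otimes_\Ga T\to X$ is an isomorphism (a standard fact for the equivalence induced by a projective generator, cf.\ Lemma~\ref{le:exact} and the colocalisation arguments), and then taking $X=\Hom_k(\La,k)$, which lies in $\Filt^\oplus(\nabla)$ since it is an injective cogenerator. Tracking the bimodule structures through these natural isomorphisms gives $\Hom_k(T,k)\otimes_\Ga T\cong\Hom_k(\La,k)$ as $\La$-$\La$-bimodules.

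Finally, the statement about the composite functor follows formally. By Corollary~\ref{co:tilt} (and Corollary~\ref{co:tilting}), $\RHom_\La(T,-)\colon\bfD^\perf(\La)\xto{\sim}\bfD^\perf(\Ga)$ is a triangle equivalence with quasi-inverse $-\Lotimes_\Ga T$; similarly $\RHom_k(-,k)$ relates $T$ over $\Ga$ and $\Ga^{\op}$, making $-\Lotimes_\La\Hom_k(T,k)$ a triangle equivalence $\bfD^\perf(\La)\xto{\sim}\bfD^\perf(\Ga)$. Composing, the displayed functor is
\[
-\Lotimes_\La\Hom_k(T,k)\Lotimes_\Ga T\;\cong\;-\Lotimes_\La\bigl(\Hom_k(T,k)\Lotimes_\Ga T\bigr)\;\cong\;-\Lotimes_\La\Hom_k(\La,k),
\]
using the first part of the theorem together with the associativity of derived tensor product; by Proposition~\ref{pr:serre} this is a Serre functor, and the intermediate equivalences exhibit it as a composite as claimed.

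I expect the main obstacle to be the careful bookkeeping of the two commuting $\La$-actions through the chain of natural isomorphisms: one must be sure that the left $\La$-action on $\Hom_k(T,k)$ (coming from the right $\La$-action on $T$ via $k$-duality) and the one induced on $\Hom_k(\La,k)$ match up, and similarly that the $\Ga$-action is balanced correctly so that the tensor product over $\Ga$ makes sense and is what the equivalence \eqref{eq:tilting} inverts. Verifying the counit isomorphism $\Hom_\La(T,X)\otimes_\Ga T\xto{\sim}X$ at the underived level for $X\in\Filt^\oplus(\nabla)$ — which is where Tor-orthogonality (Proposition~\ref{pr:tor}) and the projective-generator property of $T$ on $\Filt^\oplus(\nabla)$ (Proposition~\ref{pr:tilting}) both enter — is the technical heart; everything else is formal manipulation of derived functors already licensed by Lemma~\ref{le:derived}, Corollary~\ref{co:kgldim}, and Remark~\ref{re:kgldim}.
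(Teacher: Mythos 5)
Your proposal is correct, and it agrees with the paper on all but one step. For the first isomorphism (underived equals derived tensor product) you do exactly what the paper does: invoke Proposition~\ref{pr:tor} using that $\Hom_k(T,k)$ and $T$ are characteristic tilting modules over $\Ga$ and $\Ga^\op$ respectively, by Proposition~\ref{pr:ringel-dual}. There is a small bookkeeping slip here: you place $\Hom_k(T,k)$ in $\mod(\Ga^\op,k)$ and $T$ in $\mod(\Ga,k)$, but it is the other way around, as $\Hom_k(T,k)$ is the right $\Ga$-module and $T$ the left one; the invocation of Proposition~\ref{pr:ringel-dual} in the same sentence has the sides right, so this is only a transcription error and does not affect the conclusion. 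For the second isomorphism $\Hom_k(T,k)\Lotimes_\Ga T\cong\Hom_k(\La,k)$ you take a genuinely different route. The paper applies the first standard isomorphism of Lemma~\ref{le:derived} with $X=Y=\Hom_k(T,k)$ over $\Ga$, obtaining $\Hom_k(T,k)\Lotimes_\Ga T\cong\RHom_k(\RHom_\Ga(\Hom_k(T,k),\Hom_k(T,k)),k)$, and then uses that $\Hom_k(T,k)$ is a tilting $\Ga$-module with $\End_\Ga(\Hom_k(T,k))\cong\La$ (Proposition~\ref{pr:ringel-dual}) to identify the right-hand side with $\Hom_k(\La,k)$. You instead read off from the proof of Proposition~\ref{pr:ringel-dual} the bimodule isomorphism $\Hom_\La(T,\Hom_k(\La,k))\cong\Hom_k(T,k)$ and then apply $-\otimes_\Ga T$ to undo the tilting equivalence, via the counit $\Hom_\La(T,X)\otimes_\Ga T\to X$ for $X\in\Filt^\oplus(\nabla)$. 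This works, but it does require the extra verification, which you rightly flag as the technical heart, that this counit is an isomorphism on $\Filt^\oplus(\nabla)$; the cleanest way to see it is to pass to the derived level where $\RHom_\La(T,-)$ and $-\Lotimes_\Ga T$ are inverse triangle equivalences by Corollary~\ref{co:tilt}, and then use Ext- and Tor-orthogonality (Corollary~\ref{co:Ext-orth} and Proposition~\ref{pr:tor}) to pass back to the underived statement. In short, the paper's Lemma~\ref{le:derived} encapsulates precisely the accounting you do by hand; your route is more transparent about $-\otimes_\Ga T$ acting as the inverse tilting functor on bimodules, while the paper's is shorter. The deduction of the Serre-functor statement at the end matches the paper.
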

\begin{proof}
  We apply Proposition~\ref{pr:ringel-dual}. The modules $T$ and
  $\Hom_k(T,k)$ over $\Ga$ are characteristic tilting modules; this
  yields the first isomorphism by Proposition~\ref{pr:tor}. The second
  isomorphism follows from Lemma~\ref{le:derived}. The description of
  the Serre functor then follows by Lemma~\ref{le:serre}.
\end{proof}

\subsection*{Ringel self-dual algebras} 
The connection between Serre duality and Ringel duality is of
particular interest for a quasi-hereditary algebra that is Ringel
self-dual.

\begin{defn}
We say that a quasi-hereditary algebra $\La$ is \emph{Ringel
  self-dual} if it satisfies one of the following equivalent
conditions:
\begin{enumerate}
\item The highest weight category $\mod(\La,k)$ is equivalent to its
  Ringel dual.
\item There is a characteristic tilting module $T$ over $\La$ and an
  isomorphism $\La'=\End_\La(T)\xto{\sim}\La$ which identifies the
  standard modules over $\La'$ and $\La$.
\end{enumerate}
\end{defn}

The following description of Ringel duality as a square root of Serre
duality is inspired by a result for strict polynomial functors
\cite{Kr2013} and a similar result in the context of the
Bernstein-Gelfand-Gelfand category $\mathcal O$ \cite{MS2008}. 

Let us fix for a Ringel self-dual algebra $\La$ a characteristic
tilting module $T$ as in the above definition and identify
$\End_\La(T)=\La$. This turns $T$ and $\Hom_k(T,k)$ into
$\La$-$\La$-bimodules.

\begin{cor}
  Let $\La$ be a $k$-split quasi-hereditary algebra. Suppose that
  $\La$ is Ringel self-dual with characteristic tilting module
  $T$. Then the following are equivalent:
\begin{enumerate}
\item  $T\cong\Hom_k(T,k)$ as $\La$-$\La$-bimodules.
\item $T\Lotimes_\La T\cong \Hom_k(\La,k)$ as $\La$-$\La$-bimodules.
\item $(-\Lotimes_\La T)^2$ is a Serre functor for $\bfD^\perf(\La)$.
\end{enumerate}
\end{cor}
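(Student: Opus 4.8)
The plan is to deduce the three-way equivalence directly from the machinery already in place, in particular from Theorem~\ref{th:ringel-dual} and Lemma~\ref{le:serre}. Since $\La$ is Ringel self-dual with $\End_\La(T)=\La$, Theorem~\ref{th:ringel-dual} already gives a canonical isomorphism of $\La$-$\La$-bimodules
\[
\Hom_k(T,k)\Lotimes_\La T\cong\Hom_k(\La,k),
\]
and identifies the composite $(-\Lotimes_\La\Hom_k(T,k))\comp(-\Lotimes_\La T)$ with the Serre functor $-\Lotimes_\La\Hom_k(\La,k)$ of Proposition~\ref{pr:serre}. So the whole statement is really about comparing $T$ with $\Hom_k(T,k)$ as bimodules, and the content is bookkeeping with these identifications.

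First I would prove (1) $\Rightarrow$ (2): substituting $T\cong\Hom_k(T,k)$ into the isomorphism above immediately yields $T\Lotimes_\La T\cong\Hom_k(\La,k)$, and since both sides are already in degree zero (the left side because $T\in\Filt^\oplus(\De)$ over both $\La$ and $\La^\op$, so Proposition~\ref{pr:tor} kills the higher Tor) we also get $T\otimes_\La T\cong\Hom_k(\La,k)$. Next, (2) $\Rightarrow$ (3): by Theorem~\ref{th:ringel-dual} the functor $G:=(-\Lotimes_\La T)\comp(-\Lotimes_\La T)\cong -\Lotimes_\La(T\Lotimes_\La T)$, so if $T\Lotimes_\La T\cong\Hom_k(\La,k)$ then $G\cong -\Lotimes_\La\Hom_k(\La,k)$, which is a Serre functor by Proposition~\ref{pr:serre}. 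For (3) $\Rightarrow$ (1) I would argue that a Serre functor on a triangulated category is unique up to natural isomorphism, so $(-\Lotimes_\La T)^2\cong -\Lotimes_\La\Hom_k(\La,k)$ as functors $\bfD^\perf(\La)\to\bfD^\perf(\La)$; then I would pass from this functorial identity to a bimodule identity $T\Lotimes_\La T\cong\Hom_k(\La,k)$ (both sides represent the functor, and the standard argument identifying a right-exact functor $\bfD^\perf(\La)\to\bfD(\La)$ preserving coproducts with tensoring by its value at $\La$ applies), and finally deduce $T\cong\Hom_k(T,k)$ by cancelling one copy of $-\Lotimes_\La T$: precompose with the quasi-inverse of $-\Lotimes_\La T$ (which exists since $T$ is a tilting module, Corollary~\ref{co:tilt}), to get $-\Lotimes_\La T\cong -\Lotimes_\La\Hom_k(T,k)$ using the already-established $\Hom_k(T,k)\Lotimes_\La T\cong\Hom_k(\La,k)$, and evaluate at $\La$.

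The step I expect to be the main obstacle is the passage in (3) $\Rightarrow$ (1) from an isomorphism of functors to an isomorphism of bimodules, i.e.\ making precise that $(-\Lotimes_\La T)^2$ and $-\Lotimes_\La\Hom_k(\La,k)$ being isomorphic as endofunctors forces $T\Lotimes_\La T\cong\Hom_k(\La,k)$ as $\La$-$\La$-bimodules rather than merely as left $\La$-modules. The honest way to handle this is to work with the extension of these functors to $\bfD(\La)$ (all the relevant objects are perfect, so this is harmless), use that such a functor preserving small coproducts is represented by the bimodule obtained by evaluating at the free module $\La$, with its canonical right $\La$-action coming from functoriality in the bimodule variable; then the natural isomorphism of functors, evaluated at $\La$ and tracked through this identification, is automatically right $\La$-linear. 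Alternatively, and perhaps more cleanly for a self-contained account, one can avoid invoking uniqueness of Serre functors abstractly: assuming (3), the Serre functor $-\Lotimes_\La\Hom_k(\La,k)$ and $(-\Lotimes_\La T)^2$ both satisfy the defining adjunction-type formula of Lemma~\ref{le:serre}, and a Yoneda argument on $\bfD^\perf(\La)$ (using that $\RHom_\La(\La,F(X))\cong\RHom_\La(\La,F'(X))$ naturally for all $X$) yields $F\cong F'$ and hence, evaluating at $\La$, the bimodule isomorphism; the remaining cancellation is then formal as above.
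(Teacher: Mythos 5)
Your proposal is correct and follows the same route the paper intends: the paper's proof is the one-line ``Apply Theorem~\ref{th:ringel-dual},'' and your argument is exactly the unpacking of that citation, with the implications driven by the bimodule isomorphism $\Hom_k(T,k)\Lotimes_\La T\cong\Hom_k(\La,k)$, the description of the Serre functor as the composite $(-\Lotimes_\Ga T)\comp(-\Lotimes_\La\Hom_k(T,k))$, Proposition~\ref{pr:serre}, and uniqueness of Serre functors. Your attention to the passage from a natural isomorphism of endofunctors of $\bfD^\perf(\La)$ to an isomorphism of $\La$-$\La$-bimodules (via naturality in the right $\La$-action on $\La$, i.e.\ an Eilenberg--Watts-type argument) is exactly the point the paper leaves implicit, and the cancellation by the equivalence $-\Lotimes_\La T$ (Corollary~\ref{co:tilt}) to recover $T\cong\Hom_k(T,k)$ is the right final step.
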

\begin{proof}
Apply Theorem~\ref{th:ringel-dual}.
\end{proof}

\part{Strict polynomial functors}

In the second part of these notes we explain the highest weight
structure for categories of strict polynomial functors \cite{FS1997},
working over an arbitrary commutative ring and using some of the
principal results from the theory of Schur and Weyl functors
\cite{ABW1982}.

\section{Divided powers and strict polynomial functors}

Strict polynomial functors were introduced by Friedlander and Suslin
\cite{FS1997}. In this section we recall the definition and some basic
properties, using an equivalent description in terms of
representations of divided powers. For details and further references,
see \cite{Kr2013, Ku1998, Pi2003,Ka2012}.  The material is elementary,
based to a large extent on classical facts from multilinear
algebra. In particular, properties of divided powers are used, for
which we refer to \cite[IV.5]{Bo1981}. The language of strict
polynomial functors is employed because of its flexibility. Evaluating
strict polynomial functors at a free module of finite rank makes it
easy to transfer this work to the representation theory of Schur
algebras.

\subsection*{Finitely generated projective modules}
Throughout we fix a commutative ring $k$. Let $\P_k$ denote the
category of finitely generated projective $k$-modules. Given $V,W$ in
$\P_k$, we write $V\otimes W$ for their tensor product over $k$ and
$\Hom(V,W)$ for the group of $k$-linear maps $V\to W$. This provides
two bifunctors
\begin{align*}
-\otimes -&\colon\P_k\times\P_k\lto\P_k\\
\Hom(-,-)&\colon(\P_k)^\op\times\P_k\lto\P_k
\end{align*}
and the functor sending $V$ to $V^*=\Hom(V,k)$ yields a duality 
\[(\P_k)^\op \stackrel{\sim}\lto\P_k.\]

\subsection*{Divided and symmetric powers}

Fix a positive integer $d$ and denote by $\frS_d$ the symmetric group
permuting $d$ elements. For each $V\in\P_k$, the group $\frS_d$ acts
on $V^{\otimes d}$ by permuting the factors of the tensor product.
Denote by $\Ga^dV$ the submodule $(V^{\otimes d})^{\frS_d}$ of
$V^{\otimes d}$ consisting of the elements which are invariant under
the action of $\frS_d$; it is called the module of \emph{divided
  powers} (more correctly: \emph{symmetric tensors}) of degree $d$.  The
maximal quotient of $V^{\otimes d}$ on which $\frS_d$ acts trivially
is denoted by $S^dV$ and is called the module of \emph{symmetric
  powers} of degree $d$.  Set $\Ga^0V=k$ and $S^0V=k$.

From the definition, it follows that $(\Ga^dV)^*\cong S^d (V^*)$.
Note that $S^d V$ is a free $k$-module provided that $V$ is free. Thus
$\Ga^d V$ and $S^d V$ belong to $\P_k$ for all $V\in\P_k$, and we
obtain functors $\Ga^d,S^d\colon\P_k\to\P_k$.

\subsection*{The category of divided powers}
We consider the category $\Ga^d\P_k$ which is defined as follows. The
objects are the finitely generated projective $k$-modules and for two
objects $V,W$ set
 \[\Hom_{\Ga^d\P_k}(V,W)=\Ga^d\Hom(V,W).\]
 This identifies with $\Hom(V^{\otimes d},W^{\otimes d})^{\frS_d}$,
 where $\frS_d$ acts on $\Hom(V^{\otimes d},W^{\otimes d})$ via $(\s
 f)(v)=\s^{-1} f(\s v)$ for $f\colon V^{\otimes d}\to W^{\otimes d}$
 and $\s\in\frS_d$.  Using this identification one defines the
 composition of morphisms in $\Ga^d\P_k$. The duality for $\P_k$
 induces a duality \[(\Ga^d\P_k)^\op\stackrel{\sim}\lto \Ga^d\P_k.\]

\begin{exm}\label{ex:green}
  Let $n$ be a positive integer and set $V=k^n$. Then
  $\End_{\Ga^d\P_k}(V)$ is isomorphic to the \emph{Schur algebra}
  $S_k(n,d)$ as defined by Green \cite[Theorem~2.6c]{Gr1980}. 
\end{exm}

Following \cite{Ka2012}, this example suggests for $\Ga^d\P_k$ the
term \emph{Schur category}.

\subsection*{Strict polynomial functors}

Let $\M_k$ denote the category of $k$-modules. We study the category
of $k$-linear \emph{representations} of $\Ga^d\P_k$. This is by
definition the category of $k$-linear functors $\Ga^d\P_k\to\M_k$ and
we write by slight abuse of notation
\[\Rep\Ga^d_k=\Fun_k(\Ga^d\P_k,\M_k).\]
For  objects $X,Y$ in $\Rep\Ga^d_k$ the set of morphisms is
denoted by $\Hom_{\Ga^d_k}(X,Y)$.

The representations of $\Ga^d\P_k$ form an abelian category, where
(co)kernels and (co)products are computed pointwise in the category of
$k$-modules.

\subsection*{The Yoneda embedding}
The Yoneda embedding
\[(\Ga^d\P_k)^\op\lto\Rep\Ga^d_k,\quad V\mapsto\Hom_{\Ga^d\P_k}(V,-)\]
identifies $\Ga^d\P_k$ with the full subcategory
consisting of the representable functors. For  $V\in\Ga^d\P_k$ we write
\[\Ga^{d,V}=\Hom_{\Ga^d\P_k}(V,-).\]
For $X\in\Rep\Ga^d_k$ there is the Yoneda isomorphism
\[\Hom_{\Ga^d_k}(\Ga^{d,V},X)\xto{\sim} X(V)\] 
and it follows that $\Ga^{d,V}$ is a projective
object in $\Rep\Ga^d_k$.

\subsection*{Duality}
Given a  representation $X\in\Rep\Ga^d_k$, its \emph{dual}  $X^\circ$ is defined by
\[X^\circ(V)=X(V^*)^*.\] 
We have  for all $X,Y\in\Rep\Ga^d_k$ a natural isomorphism
\[\Hom_{\Ga^d_k}(X,Y^\circ)\cong\Hom_{\Ga^d_k}(Y,X^\circ).\]
The evaluation morphism $X\to X^{\circ\circ}$ is an
isomorphism when $X$ takes values in $\P_k$.

\begin{exm} The divided power functor $\Ga^d$ and the symmetric power
  functor $S^d$ belong to $\Rep\Ga^d_k$. In fact
\[\Ga^d=\Hom_{\Ga^d\P_k}(k,-)\quad\text{and}\quad S^d\cong(\Ga^d)^\circ.\]
\end{exm}

\subsection*{The algebra of divided powers} 

Given $V\in\P_k$, we set $\Ga V=\bigoplus_{d\ge 0}\Ga^dV$.  For
non-negative integers $d,e$ the inclusion
$\frS_{d}\times\frS_e\subseteq \frS_{d+e}$ induces natural maps
\begin{equation}\label{eq:comult}
\Ga^{d+e}V\lto \Ga^{d}V\otimes \Ga^{e}V\qquad\text{and}\qquad
\Ga^{d}V\otimes \Ga^{e}V\lto \Ga^{d+e}V.
\end{equation}
The first map is given by \[(V^{\otimes d+e})^{\frS_{d+e}}\subseteq
(V^{\otimes d+e})^{\frS_d\times\frS_e} \cong (V^{\otimes
  d})^{\frS_d}\otimes (V^{\otimes e})^{\frS_e}.
\]
The second map sends $x\otimes y\in\Ga^{d}V\otimes \Ga^{e}V$
to \[xy=\sum_{g\in \frS_{d+e}/\frS_{d}\times\frS_e} g(x\otimes y)\]
where $g(x\otimes y)=\s(x\otimes y)$ for a coset
$g=\s(\frS_{d}\times\frS_e)$. This multiplication gives $\Ga V$ the
structure of a commutative $k$-algebra.

Now suppose that $V$ is a free $k$-module with basis
$\{v_1,\ldots,v_n\}$. Let $\La(n,d)$ denote the set of sequences
$\la=(\la_1,\la_2,\ldots,\la_n)$ of non-negative integers such that
$\sum\la_i=d$.  Then the elements \[v_\la=\prod_{i=1}^n
v_i^{\otimes\la_i}\quad\text{for}\quad\la\in\La(n,d)\] form a
$k$-basis of $\Ga^d V$.

Let $\{v^*_1,\ldots,v^*_n\}$ denote the dual basis of $V^*$. We
identify the symmetric algebra $S(V^*)=\bigoplus_{d\ge
  0}S^d(V^*)$ with the polynomial algebra
$k[v^*_1,\ldots,v^*_n]$. Let $\{v^*_\la\}_{\la\in\La(n,d)}$ be the
basis of $(\Ga^d V)^*$ dual to $\{v_\la\}_{\la\in\La(n,d)}$. Then the
canonical isomorphism $(\Ga^d V)^*\xto{\sim}S^d(V^*)$ maps each
$v^*_\la$ to $\prod_{i=1}^n (v^*_i)^{\la_i}$.

\subsection*{Tensor products}
For non-negative integers $d,e$ there is a tensor product \[-\otimes
-\colon\Rep\Ga^d_k\times \Rep\Ga^e_k\lto \Rep\Ga^{d+e}_k.\]
Let $X\in\Rep\Ga^d_k$ and $Y\in\Rep\Ga^e_k$. The
functor $X\otimes Y$ acts on objects via
\[(X\otimes Y)(V)=X(V)\otimes Y(V)\] and on morphisms via the map
\[\Ga^{d+e}\Hom(V,W)\lto \Ga^{d}\Hom(V,W)\otimes \Ga^{e}\Hom(V,W)\]
given by \eqref{eq:comult}. Note that 
\[(X\otimes Y)^\circ\cong X^\circ\otimes Y^\circ\] when $X$ and $Y$
take values in $\P_k$.

For $\la\in\La(n,d)$ we
set \[\Ga^\la=\Ga^{\la_1}\otimes\cdots\otimes\Ga^{\la_n}\quad
\text{and}\quad S^\la=S^{\la_1}\otimes\cdots\otimes S^{\la_n}.\] We
have \[(\Ga^\la)^\circ\cong S^\la\qquad\text{and}\qquad
\Ga^{(1,\ldots,1)}\cong\otimes ^n\cong S^{(1,\ldots,1)}.\]

\subsection*{Graded representations} 

It is sometimes convenient to consider the category 
\[\prod_{d\ge 0}\Rep\Ga^d_k\]
consisting of graded representations $X=(X^0,X^1,X^2,\ldots)$. An
example is for each $V\in\P_k$ the representation
\[\Ga^V=(\Ga^{0,V},\Ga^{1,V},\Ga^{2,V},\ldots).\] The tensor
product $X\otimes Y$ of graded representations $X,Y$ is defined in
degree $d$ by
\[(X\otimes Y)^d=\bigoplus_{i+j=d}X^i\otimes Y^j.\]

\subsection*{Decomposing divided powers}

The assignment which takes $V\in\P_k$ to the symmetric algebra
$SV=\bigoplus_{d\ge 0}S^dV$ gives a functor from $\P_k$ to the
category of commutative $k$-algebras which preserves coproducts. Thus
\[S V\otimes S W\cong S(V\oplus W)\]
and therefore by duality
\[\Ga V\otimes \Ga W\cong\Ga (V\oplus W).\]
This yields an isomorphism of graded representations
\[\Ga^{V}\otimes\Ga^{W}\cong \Ga^{V\oplus W}.\]
Thus for each positive integer $n$, one obtains in degree $d$ a decomposition
\[ \Ga^{d,k^n}=\bigoplus_{i=0}^d(\Ga^{d-i,k^{n-1}}\otimes \Ga^i)\]
and using induction a canonical decomposition
\begin{equation}\label{eq:decomp}
  \Ga^{d,k^n}=\bigoplus_{\la\in\La(n,d)}\Ga^{\la}.
\end{equation}

The decomposition of divided powers implies that the finitely
generated projective objects in $\Rep\Ga^d_k$ are precisely the direct
summands of finite direct sums of functors $\Ga^\la$, where
$\la=(\la_1,\ldots,\la_n)$ is any sequence of non-negative integers
satisfying $\sum\la_i=d$ and $n$ is any positive integer.

\subsection*{Exterior powers}

Given $V\in\P_k$, let $\La V=\bigoplus_{d\ge 0}\La^d V$ denote the
exterior algebra, which is obtained from the tensor algebra
$TV=\bigoplus_{d\ge 0}V^{\otimes d}$ by taking the quotient with
respect to the ideal generated by the elements $v\otimes v$, $v\in V$.

For each $d\ge 0$, the $k$-module $\La^dV$ is free provided that $V$
is free. Thus $\La^d V$ belongs to $\P_k$ for all $V\in\P_k$, and this
gives a functor $\Ga^d\P_k\to\P_k$, since the ideal generated by the
elements $v\otimes v$ is invariant under the action of $\mathfrak S_d$
on $V^{\otimes d}$. There is a natural isomorphism \[\La^d (V^*)\cong
(\La^d V)^*\] induced by $(f_1\wedge\cdots\wedge
f_d)(v_1\wedge\cdots\wedge v_d)=\det (f_i(v_j))$, and therefore
$(\La^d)^\circ\cong\La^d$.

\subsection*{Representations of Schur algebras}

Strict polynomial functors and modules over Schur algebras are closely
related, since for any $X\in \Rep\Ga^d_k$ the Schur algebra $S_k(n,d)$
acts on $X(k^n)$; cf.\ Example~\ref{ex:green}.

Let $n\ge d$. The functor
\begin{equation}\label{eq:schur}
\Rep\Ga^d_k\lto \Mod S_k(n,d)^\op,\quad X\mapsto X(k^n)
\end{equation}
gives  an equivalence, because evaluation at $k^n$ identifies
with $\Hom_{\Ga^d_k}(\Ga^{d,k^n},-)$ and $\Ga^{d,k^n}$ is
a projective generator.\footnote{Our preference is to work
  with the functor category $\Rep\Ga^d_k$, because the Schur category
  $\Ga^d\P_k$ carries useful structure (e.g.\ $\oplus$ or $\otimes$)
  which `disappears' when one evaluates at a single object $k^n$.}

\subsection*{Base change}

Let $k\to k'$ be a homomorphism of commutative rings. The functor
$-\otimes_k k'\colon \P_k\to \P_{k'}$ induces for each positive
integer $d$  functors \[\Ga^d\P_k\lto\Ga^d\P_{k'}\qquad\text{and}
\qquad\Rep\Ga^d_k\lto\Rep\Ga^d_{k'}\] which we denote again by
$-\otimes_k k'$. For example, $\Ga^\la\otimes_k k'=\Ga^\la$ for each
$\la\in\La(n,d)$.

We note that most results in this work are invariant under base change.

\section{Schur and Weyl functors}

Generalising the results of Schur \cite{Sc1901} and Lascoux
\cite{La1977} in characteristic zero, Schur and Weyl functors, in
arbitrary characteristic, were introduced by Akin, Buchsbaum, and
Weyman \cite{ABW1982}. We give the definition and refer to the next
section for a description in terms of (co)standard objects.

\subsection*{Partitions and Young diagrams}

Fix a positive integer $d$. A \emph{partition} of \emph{weight} $d$
(or simply a partition of $d$) is a sequence
$\la=(\la_1,\la_2,\ldots )$ of non-negative integers satisfying
$\la_1\ge\la_2\ge \ldots$ and $\sum\la_i=d$. Its \emph{conjugate}
$\la'$ is the partition where $\la'_i$ equals the number of terms of
$\la$ that are greater or equal than $i$.

Fix a partition $\la$ of weight $d$. Each integer $r\in\{1,\ldots,d\}$
can be written uniquely as sum $r=\la_1+\ldots\la_{i-1}+j$ with $1\le j\le
\la_i$. The pair $(i,j)$ describes the position ($i$th row and $j$th
column) of $r$ in the \emph{Young diagram} corresponding to $\la$. The
partition $\la$ determines a permutation $\s_\la\in\frS_d$ by
$\s_\la(r)=\la'_1+\ldots\la'_{j-1}+i$, where $1\le i\le\la_j$. Note
that $\s_{\la'}=\s_\la^{-1}$. Here is an example.
\[\la=(3,2)\quad
\ytableausetup{smalltableaux} \begin{ytableau}
1 & 2 & 3 \\ 4 & 5
\end{ytableau}
\qquad\la'=(2,2,1)\quad
\ytableausetup{smalltableaux} \begin{ytableau}
1 & 2 \\ 3 & 4\\ 5
\end{ytableau} \qquad \s_\la=\left(\begin{smallmatrix}1&2&3&4&5\\[.4em]
    1&3&5&2&4\end{smallmatrix}\right)
\]

\subsection*{Schur and Weyl modules} 

Fix a partition $\la$ of weight $d$, and assume that $\la_1+\cdots
+\la_n=d=\la'_1+\cdots +\la'_m$.  For $V\in\P_k$ one defines the
\emph{Schur module} $S_\la V$ as image of the map
\[\La^{\la'_1}V\otimes\cdots\otimes
\La^{\la'_m}V\xto{\Delta\otimes\cdots\otimes\Delta} V^{\otimes
  d}\xto{s_{\la}}V^{\otimes d}\xto{\nabla\otimes\cdots\otimes\nabla}
S^{\la_1}V\otimes\cdots\otimes S^{\la_n}V.\] Here, we denote for an
integer $r$ by $\Delta\colon\La^{r}V\to V^{\otimes r}$ the
comultiplication given by
\[\Delta(v_1\wedge\cdots\wedge v_{r})=\sum_{\s\in\frS_{r}}
{\sgn(\s)}v_{\s(1)}\otimes\cdots\otimes v_{\s(r)},\]
$\nabla\colon V^{\otimes r}\to S^r V$ is the multiplication, and
$s_\la\colon V^{\otimes d}\to V^{\otimes d}$ is given by
\[s_\la(v_1\otimes\cdots\otimes v_d)=v_{\s_\la(1)}\otimes\cdots\otimes
v_{\s_\la(d)}.\]
The corresponding \emph{Weyl module} $W_\la V$ is by definition the
image of the analogous map
\[\Ga^{\la_1}V\otimes\cdots\otimes
\Ga^{\la_n}V\xto{\Delta\otimes\cdots\otimes\Delta} V^{\otimes
  d}\xto{s_{\la'}}V^{\otimes d}\xto{\nabla\otimes\cdots\otimes\nabla}
\La^{\la'_1}V\otimes\cdots\otimes \La^{\la'_m}V.\] Note that
$(W_\la V)^*\cong S_{\la}(V^*)$.

\subsection*{Young tableaux}
Suppose that $V$ is a free $k$-module with basis
$\{v_1,\ldots,v_r\}$. We fix a partition $\la=(\la_1,\ldots,\la_n)$
and describe an explicit basis for $S_\la V$ and $W_\la V$.

A \emph{filling} of a Young diagram is a map which assigns to each box
a positive integer. A \emph{Young tableau} is a filling that is weakly
increasing along each row and strictly increasing down each column.

Each filling $T$ with entries in $\{1,\ldots,r\}$ yields two
elements \[v_T\in \Ga^{\la_1}V\otimes\cdots \otimes
\Ga^{\la_n}V\quad\text{and}\quad \hat v_T\in \La^{\la'_1}V\otimes\cdots
\otimes \La^{\la'_m}V\] by replacing any $i$ in a box by $v_i$. Here is
an example of a Young tableau
\begin{equation*}\label{eq:tableau}
\la=(5,3,3,2)\qquad
\ytableausetup{smalltableaux} \begin{ytableau}
1 & 2 & 2 &3&3\\
2&3&5\\
4&4&6\\
5&6
\end{ytableau}
\end{equation*}
and here are the corresponding elements.
\begin{align*}
v_T&=(v_1 ( v_2 \otimes v_2)(  v_3\otimes  v_3))\otimes
(v_2  v_3  v_5)\otimes ((v_4\otimes  v_4)  v_6)\otimes
(v_5  v_6)\\
\hat v_T&=(v_1\wedge v_2\wedge v_4\wedge v_5)\otimes
(v_2\wedge v_3\wedge v_4\wedge v_6)\otimes (v_2\wedge v_5\wedge v_6)\otimes
v_3\otimes v_3
\end{align*}
More precisely, let $T(i,j)$ denote the entry of the box $(i,j)$ and
define $\a^i\in\La(r,\la_i)$ by setting $\a^i_j=\card\{t\mid
T(i,t)=j\}$.  Then $v_T=v_{\a^1}\otimes\cdots\otimes
v_{\a^n}$. Note that the elements $v_T$ form a $k$-basis of
$\Ga^\la V$ as $T$ runs through all fillings (weakly increasing along
each row).

\begin{prop}[{\cite[Theorems~II.2.16 and II.3.16]{ABW1982}}]\label{pr:basis}
  \pushQED{\qed} Let $\la$ be a partition and $V$ a free $k$-module of
  rank $r$.
\begin{enumerate}
\item The canonical map $\La^{\la'} V\to S_\la V$ sends the elements
  $\hat v_T$ with $T$ a Young tableau on $\la$ with entries in
  $\{1,\ldots,r\}$ to a $k$-basis of $S_\la V$.
\item The canonical map $\Ga^\la V\to W_\la V$ sends the elements
  $v_T$ with $T$ a Young tableau on $\la$ with entries in
  $\{1,\ldots,r\}$ to a $k$-basis of $W_\la V$.\qedhere
\end{enumerate}
\end{prop}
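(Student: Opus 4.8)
The plan is as follows; since this is a theorem of Akin, Buchsbaum, and Weyman, I only sketch the strategy. First I would observe that the two assertions are equivalent. Indeed, by the natural isomorphism $(W_\la V)^*\cong S_\la(V^*)$ and the fact that $V^*$ is again free of rank $r$, applying~(2) to $V^*$ produces a $k$-basis of $W_\la(V^*)$ indexed by the Young tableaux on $\la$; its dual basis is then a $k$-basis of $S_\la V$, and since one checks that the transition matrix between this dual basis and the elements $\hat v_T$ is unitriangular with respect to a suitable total order on tableaux, the $\hat v_T$ form a basis as well. So it suffices to prove~(2). Next I would reduce to the universal situation $k=\bbZ$, $V=\bbZ^r$: the canonical surjection $\Ga^\la V\to W_\la V$ and the map into $\La^{\la'_1}V\otimes\cdots\otimes\La^{\la'_m}V$ are compatible with base change along $k\to k'$, so once $W_\la\bbZ^r$ is known to be $\bbZ$-free on the standard tableaux (and the cokernels occurring along the way are $\bbZ$-free, which the straightening procedure also yields), a flatness argument transfers the statement to an arbitrary $k$.

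For the universal case, recall that $\Ga^\la V$ is free with basis the elements $v_T$, where $T$ runs over all fillings of the shape $\la$ that are weakly increasing along rows, and that $\Ga^\la V\to W_\la V$ is onto. The first step is \emph{spanning}: if such a $T$ is not column-strict, I would rewrite $v_T$, modulo the kernel of $\Ga^\la V\to W_\la V$, as a $\bbZ$-linear combination of $v_{T'}$ with $T'$ strictly smaller in a fixed total order refining the dominance order on the contents of consecutive pairs of columns. This rests on the integral quadratic \emph{straightening relations}: between two neighbouring divided-power factors $\Ga^{\la_i}V\otimes\Ga^{\la_{i+1}}V$ there is a family of identities, obtained from the defining composite into $V^{\otimes d}$ and valid after projecting onto $\La^{\la'_1}V\otimes\cdots\otimes\La^{\la'_m}V$, which expresses a column violation as a combination of ``shuffled'' fillings. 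Iterating this and checking that the algorithm terminates shows that the $v_T$ with $T$ a Young tableau generate $W_\la V$.

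The second step is \emph{linear independence over $\bbZ$}. Here I would use a leading-term argument: compose $W_\la V\hookrightarrow\La^{\la'_1}V\otimes\cdots\otimes\La^{\la'_m}V$, expand the image of each $v_T$ in the basis of ordered wedge monomials in $v_1,\dots,v_r$, and single out for each Young tableau $T$ the wedge monomial obtained by reading down the columns of $T$; one shows that this monomial occurs with coefficient $\pm1$ in the image of $v_T$, and that it does not occur in the image of $v_{T'}$ for a Young tableau $T'\ne T$ that is not larger than $T$ in the chosen order. This makes the relevant transition matrix unitriangular and forces independence; alternatively, ABW deduce independence by induction on the number of columns of $\la$ using the fundamental exact sequences relating Schur modules of shapes differing by one box, and either route works. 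The main obstacle is the combinatorial heart of both steps — setting up the quadratic straightening relations integrally, proving the rewriting terminates with an upper-triangular transition, and the parallel bookkeeping that identifies the leading monomials; the duality and base-change reductions are purely formal.
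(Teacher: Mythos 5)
The paper itself gives no proof: it cites Theorems~II.2.16 and II.3.16 of Akin--Buchsbaum--Weyman and points to the expositions in Fulton~\cite[\S8.1]{Fu1997} and Weyman~\cite[\S2.1]{We2003}. Your sketch is a faithful outline of exactly the argument found there --- reduction by duality and base change to $\bbZ$, spanning via the integral quadratic straightening relations on adjacent rows, and linear independence via the unitriangular leading-monomial matrix in the wedge-power target --- so it takes essentially the same route as the cited proof.
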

For expositions on Schur and Weyl modules, see \cite[\S8.1]{Fu1997} or
\cite[\S2.1]{We2003}. There one finds proofs of
Proposition~\ref{pr:basis} and presentations of these modules, which
are relevant for the proof of Theorem~\ref{th:weight}.

\subsection*{Schur and Weyl functors}
The definition of Schur and Weyl modules gives rise to functors $S_\la$
and $W_\la$ in $\Rep\Ga^d_k$ for each partition $\la$ of weight
$d$. Note that $S_\la^\circ\cong W_\la$ and  $W_\la^\circ\cong S_\la$.

\begin{exm}
We have $S_{(1,\ldots,1)}=\La^d$ and $S_{(d)}=S^d$.
\end{exm}

\section{Weight spaces and (co)standard objects}

\subsection*{Weight space decompositions}

Fix a free $k$-module $V$ with basis $\{v_1,\ldots,v_n\}$.  For any
$X\in\Rep\Ga^d_k$ we describe a decomposition of $X(V)$ into weight
spaces; see also \cite[Corollary~2.12]{FS1997} for this decomposition
and a different argument.

The canonical decomposition  \eqref{eq:decomp}
\begin{equation*}
\Ga^{d,V}=\bigoplus_{\mu\in\La(n,d)}\Ga^{\mu}.
\end{equation*}
induces via the Yoneda isomorphism
$\Hom_{\Ga^d_k}(\Ga^{d,V},X)\xto{\sim} X(V)$ 
a decomposition
\[X(V)=\bigoplus_{\mu\in\La(n,d)}X(V)_{\mu}\qquad\text{with}\qquad
\Hom_{\Ga^d_k}(\Ga^\mu,X)\xto{\sim}X(V)_\mu.\] For each
$\mu\in\La(n,d)$ this isomorphism can be written as composition of
\[\Hom_{\Ga^d_k}(\Ga^\mu,X)\stackrel{\sim}\lto\Hom_{S_k(n,d)}(\Ga^\mu(V),X(V)),\quad
\p\mapsto \p_V\]
and
\[\Hom_{S_k(n,d)}(\Ga^\mu(V),X(V))\stackrel{\sim}\lto
X(V)_\mu,\quad\psi\mapsto \psi(v_1^{\otimes\mu_1}\otimes\cdots\otimes
v_n^{\otimes\mu_n}).\] Here, we identify
$\End_{\Ga^d\P_k}(V)=S_k(n,d)$ and note that
$v_1^{\otimes\mu_1}\otimes\cdots\otimes v_n^{\otimes\mu_n}$ generates
$\Ga^\mu(V)$ as $S_k(n,d)$-module.

The following lemma summarises this discussion. 

\begin{lem}\label{le:mor}
  \pushQED{\qed} Let $\mu\in\La(n,d)$ and set $V=k^n$.  For
  $X\in\Rep\Ga^d_k$ there  are natural isomorphisms
\[\Hom_{\Ga^d_k}(\Ga^\mu,X)\xto{\sim}\Hom_{S_k(n,d)}(\Ga^\mu(V),X(V))\xto{\sim}
X(V)_\mu.\qedhere\]
\end{lem}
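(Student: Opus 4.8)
The plan is to read off the two isomorphisms directly from the canonical decomposition \eqref{eq:decomp} and the Yoneda lemma, which is exactly the computation carried out in the discussion preceding the statement; the only thing I would make explicit is \emph{why} both maps in the displayed composite are bijective. First I would recall that \eqref{eq:decomp} exhibits $\Ga^\mu$ as a direct summand of the representable functor $\Ga^{d,V}$ for every $\mu\in\La(n,d)$, and that under the Yoneda isomorphism $\Hom_{\Ga^d_k}(\Ga^{d,V},X)\xto{\sim}X(V)$ this decomposition transports to the weight space decomposition $X(V)=\bigoplus_{\mu}X(V)_\mu$; by definition $X(V)_\mu$ is the summand corresponding to $\Ga^\mu$. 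Consequently the composite $\Hom_{\Ga^d_k}(\Ga^\mu,X)\to\Hom_{S_k(n,d)}(\Ga^\mu(V),X(V))\to X(V)_\mu$, $\p\mapsto\p_V\mapsto\p_V(v_1^{\otimes\mu_1}\otimes\cdots\otimes v_n^{\otimes\mu_n})$, is precisely the restriction of the Yoneda isomorphism to this summand, and hence bijective. This uses $\End_{\Ga^d\P_k}(V)=S_k(n,d)$, so that $\p_V$ is a morphism of $S_k(n,d)$-modules, together with the fact that $v_1^{\otimes\mu_1}\otimes\cdots\otimes v_n^{\otimes\mu_n}$ generates $\Ga^\mu(V)$ over $S_k(n,d)$, so that the second map does land in $X(V)_\mu$.

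It then remains to see that the first map $\p\mapsto\p_V$ is bijective; the bijectivity of the second map follows formally, since the composite is. For the first map I would argue that evaluation at $V$, namely $X\mapsto X(V)$, is the additive functor $\Hom_{\Ga^d_k}(\Ga^{d,V},-)$, and that the assignment $Y\mapsto\bigl(\Hom_{\Ga^d_k}(Y,X)\to\Hom_{S_k(n,d)}(Y(V),X(V))\bigr)$, $\p\mapsto\p_V$, is a morphism of additive functors in $Y$. It is an isomorphism for $Y=\Ga^{d,V}$, both sides being canonically identified with $X(V)$ via Yoneda, and therefore it is an isomorphism on every direct summand of $\Ga^{d,V}$, in particular on $Y=\Ga^\mu$.

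The point worth flagging is that we do \emph{not} assume $n\ge d$, so evaluation at $V=k^n$ is not in general an equivalence $\Rep\Ga^d_k\xto{\sim}\Mod S_k(n,d)^\op$ as in \eqref{eq:schur}; the argument nevertheless goes through because $\Ga^\mu$ is a projective summand of the representable functor $\Ga^{d,V}$, and projectivity of the source is all the Yoneda lemma requires. Apart from this observation, the proof is bookkeeping with the decomposition \eqref{eq:decomp} and the identification of the relevant Schur algebra action, so no genuine obstacle arises.
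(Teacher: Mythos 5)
Your proof takes essentially the same approach as the paper, which establishes the lemma in the paragraph immediately preceding it by combining the canonical decomposition \eqref{eq:decomp} with the Yoneda isomorphism and then factoring it through $\Hom_{S_k(n,d)}(\Ga^\mu(V),X(V))$. Your explicit direct-summand argument for why $\p\mapsto\p_V$ is bijective, and your remark that $n\ge d$ is not assumed so one cannot simply invoke the equivalence \eqref{eq:schur}, usefully spell out details the paper leaves implicit.
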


We observe that the duality preserves the weight space decomposition.

\begin{lem}\label{le:duality}
 Let $\mu\in\La(n,d)$ and set $V=k^n$. For $X\in\Rep\Ga^d_k$ there is a natural isomorphism
\[ X^\circ(V)_\mu \cong X(V^*)_\mu^*.\]
\end{lem}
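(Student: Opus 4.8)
The plan is to track the weight space decomposition through the defining isomorphism $X^\circ(V)=X(V^*)^*$. First I would recall that for $W=k^n$ the decomposition $X(W)=\bigoplus_{\mu\in\La(n,d)}X(W)_\mu$ is cut out by idempotents: let $e_\mu^W\in\End_{\Ga^d\P_k}(W)=S_k(n,d)$ be the idempotent projecting $\Ga^{d,W}$ onto the summand $\Ga^\mu$ of the canonical decomposition \eqref{eq:decomp}; then $X(e_\mu^W)$ is the projection of $X(W)$ with image $X(W)_\mu$, by Lemma~\ref{le:mor}, and this is manifestly natural in $X$. Applying this to $V$ with basis $\{v_1,\ldots,v_n\}$ and to $V^*$ with the dual basis $\{v_1^*,\ldots,v_n^*\}$ produces idempotents $e_\mu^V$ and $e_\mu^{V^*}$.

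Next I would show that the anti-isomorphism $\End_{\Ga^d\P_k}(V)\xto{\sim}\End_{\Ga^d\P_k}(V^*)$, $f\mapsto f^\vee$, induced by the duality $(\Ga^d\P_k)^\op\xto{\sim}\Ga^d\P_k$ (degreewise transpose of linear maps) carries $e_\mu^V$ to $e_\mu^{V^*}$. In terms of the chosen dual bases the idempotent $e_\mu^V$ is obtained by applying the divided-power comultiplication to the diagonal matrix units $E_{ii}\in\End(V)$; degreewise transpose sends each $E_{ii}$ to the diagonal matrix unit of $V^*$ for the dual basis and commutes with the comultiplication, so $(e_\mu^V)^\vee=e_\mu^{V^*}$. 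Granting this, note that the dual representation satisfies $X^\circ(g)=X(g^\vee)^*$ for every morphism $g$ of $\Ga^d\P_k$, by the very definition of $X^\circ$; hence
\[X^\circ(e_\mu^V)=X\big((e_\mu^V)^\vee\big)^*=X(e_\mu^{V^*})^*\colon X^\circ(V)\lto X^\circ(V).\]
By the previous paragraph applied to the representation $X^\circ$, the left-hand side is the projection of $X^\circ(V)$ onto $X^\circ(V)_\mu$. On the other hand $X(e_\mu^{V^*})$ is the projection of $X(V^*)$ onto $X(V^*)_\mu$, and dualising the \emph{finite} direct sum $X(V^*)=\bigoplus_{\nu}X(V^*)_\nu$ shows that its $k$-linear dual $X(e_\mu^{V^*})^*$ is the projection of $X(V^*)^*=X^\circ(V)$ onto $X(V^*)_\mu^*$. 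Comparing images then gives the asserted natural isomorphism $X^\circ(V)_\mu\cong X(V^*)_\mu^*$.

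The only step involving any real work is the identity $(e_\mu^V)^\vee=e_\mu^{V^*}$, i.e.\ the compatibility of the weight idempotents with the duality; once the idempotents are written out explicitly in terms of the matrix units $E_{ii}$ this is a routine computation, and the remainder of the argument is formal bookkeeping with adjunctions and finite direct sums.
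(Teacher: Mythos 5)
Your proof is correct and follows essentially the same approach as the paper: both identify the weight spaces via the Yoneda isomorphism applied to the canonical decomposition \eqref{eq:decomp} and track them through the defining identity $X^\circ(V)=X(V^*)^*$, with your idempotent computation making explicit the compatibility that the paper invokes tacitly via Lemma~\ref{le:mor}. One small terminological slip: $e^V_\mu$ is produced from the diagonal matrix units $E_{ii}$ by divided-power \emph{multiplication} as in \eqref{eq:comult} (namely $e^V_\mu$ is the product of the elements $E_{ii}^{\otimes\mu_i}\in\Ga^{\mu_i}\End(V)$), not comultiplication.
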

\begin{proof}
We have
  \[\Hom_{\Ga^d_k}(\Ga^{d,V},X^\circ)\cong X^\circ(V)=X(V^*)^*\cong\Hom_{\Ga^d_k}(\Ga^{d,V^*},X)^*.
\]
Now use Lemma~\ref{le:mor} and the canonical decomposition  
\begin{equation*}
\Ga^{d,V}\cong\bigoplus_{\mu\in\La(n,d)}\Ga^{\mu}\cong \Ga^{d,V^*}.\qedhere
\end{equation*}
\end{proof}

\subsection*{Standard morphisms}

We compute the weight spaces for $\Ga^\la$ and $S^\la$.  

Let $\la=(\la_1,\la_2,\ldots)$ and $\mu=(\mu_1,\mu_2,\ldots)$ be
sequences of non-negative integers satisfying
$\sum\la_i=d=\sum\mu_j$. Given a matrix $A=(a_{ij})_{i,j\ge 1}$ of non-negative
integers with $\la_i=\sum_j a_{ij}$ and $\mu_j=\sum_i a_{ij}$ for all
$i,j$, there is a \emph{standard morphism}
\[\g_A\colon\;\;\Ga^\mu=\bigotimes_j\Ga^{\mu_j}\lto
\bigotimes_j\Big(\bigotimes_i\Ga^{a_{ij}}\Big)
=\bigotimes_i\Big(\bigotimes_j\Ga^{a_{ij}}\Big)\lto \bigotimes_i\Ga^{\la_i}=\Ga^\la
\]
where the first morphism is the tensor product of the natural
inclusions $\Ga^{\mu_j}\to\bigotimes_i\Ga^{a_{ij}}$ and the second
morphism is the tensor product of the natural product maps
$\bigotimes_j\Ga^{a_{ij}}\to\Ga^{\la_i}$, as given by
\eqref{eq:comult}. Analogously, there is a morphism
\[\s_A\colon\;\;\Ga^\mu=\bigotimes_j\Ga^{\mu_j}\lto
\bigotimes_j\Big(\bigotimes_i T^{a_{ij}}\Big)
=\bigotimes_i\Big(\bigotimes_j T^{a_{ij}}\Big)\lto \bigotimes_i
S^{\la_i}= S^\la
\]
where $T^r=\otimes^r$ for any non-negative integer $r$, the first
morphism is the tensor product of the natural inclusions
$\Ga^{\mu_j}\to\bigotimes_i T^{a_{ij}}$, and the second morphism is
the tensor product of the natural product maps $\bigotimes_j
T^{a_{ij}}\to S^{\la_i}$.

\begin{lem}[{\cite[p.~8]{To1997}}]\label{le:totaro}
  Let $\la=(\la_1,\la_2,\ldots)$ and $\mu=(\mu_1,\mu_2,\ldots)$ be
  sequences of non-negative integers with $\sum\la_i=d=\sum\mu_i$.
\begin{enumerate}
\item The morphisms $\g_A$ form a $k$-basis of 
  $\Hom_{\Ga^d_k}(\Ga^\mu,\Ga^\la)$.\footnote{This  yields a basis of the Schur algebra 
$S_k(n,d)\cong\bigoplus_{\la,\mu\in\La(n,d)}\Hom_{\Ga^d_k}(\Ga^\mu,\Ga^\la)$.}
\item The morphisms $\s_A$ form a $k$-basis of 
  $\Hom_{\Ga^d_k}(\Ga^\mu, S^\la)$.
\end{enumerate} 
\end{lem}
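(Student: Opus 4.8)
The plan is to reduce everything to the weight-space description of morphisms from Lemma~\ref{le:mor} together with the known bases of divided and symmetric powers described earlier. First I would fix $n$ large enough (say $n\ge d$) and set $V=k^n$ with basis $\{v_1,\ldots,v_n\}$. By Lemma~\ref{le:mor} the space $\Hom_{\Ga^d_k}(\Ga^\mu,X)$ is naturally identified with the weight space $X(V)_\mu$, i.e.\ with the image of the distinguished generator $v_\mu=v_1^{\otimes\mu_1}\otimes\cdots\otimes v_n^{\otimes\mu_n}$ of $\Ga^\mu(V)$ under a morphism. So for part~(1) the claim is that the elements $\g_A(v_\mu)\in\Ga^\la(V)$, as $A$ ranges over the non-negative integer matrices with row sums $\la$ and column sums $\mu$, form a $k$-basis of the weight space $\Ga^\la(V)_\mu$; part~(2) is the analogous statement for $\s_A(v_\mu)\in S^\la(V)_\mu$.

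Next I would compute these images explicitly. Writing $\Ga^\la V=\bigotimes_i\Ga^{\la_i}V$ and using the standard monomial basis $\{v_{\a}\}$ of each $\Ga^{\la_i}V$ indexed by $\a\in\La(n,\la_i)$, unravelling the definition of $\g_A$ shows that $\g_A(v_\mu)$ is (up to a positive integer multiple coming from the multinomial coefficients in the product maps of \eqref{eq:comult}) the basis tensor $\bigotimes_i v_{(a_{i1},a_{i2},\ldots)}$; the point is that the inclusions $\Ga^{\mu_j}\to\bigotimes_i\Ga^{a_{ij}}$ are \emph{diagonal}, hence send $v_j^{\otimes\mu_j}$ to a single basis tensor, and the subsequent product maps land in a single basis monomial of $\Ga^{\la_i}V$. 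Thus distinct matrices $A$ give \emph{distinct} basis elements of $\Ga^\la V$, so the $\g_A(v_\mu)$ are $k$-linearly independent. Conversely, every basis monomial $\bigotimes_i v_{\b^i}$ of $\Ga^\la V$ that lies in the weight space of weight $\mu$ satisfies $\sum_i\b^i_j=\mu_j$ for all $j$, i.e.\ the $\b^i$ assemble into a matrix $A=(\b^i_j)$ with the required row and column sums; hence the $\g_A(v_\mu)$ also \emph{span} $\Ga^\la(V)_\mu$. This gives~(1). For~(2) the same bookkeeping applies with $S^\la V=\bigotimes_i S^{\la_i}V$ in place of $\Ga^\la V$; here one uses that $S^{\la_i}V$ has basis the degree-$\la_i$ monomials in $v_1,\ldots,v_n$, again indexed by $\La(n,\la_i)$, and that $\s_A$ composed through $T^{a_{ij}}=\otimes^{a_{ij}}$ followed by multiplication into $S^{\la_i}$ sends $v_\mu$ to the monomial basis element determined by the $i$th row of $A$.

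The only subtlety — and the place I would be most careful — is whether the coefficients appearing are units, i.e.\ whether the $\g_A(v_\mu)$ are honest basis elements or merely a basis up to diagonal rescaling by positive integers that might vanish in positive characteristic. Here the crucial observation is that in both the divided-power product map $\Ga^{a}V\otimes\Ga^{b}V\to\Ga^{a+b}V$ of \eqref{eq:comult} and the symmetric product map, applied to the \emph{specific} inputs coming from diagonal vectors $v_j^{\otimes\mu_j}$, the relevant structure constant is $1$: multiplying $v_j^{\otimes a}$ and $v_j^{\otimes b}$ (same index $j$!) inside $\Ga V$ or $S V$ yields exactly $\binom{a+b}{a}v_j^{\otimes(a+b)}$ in $S^{a+b}V$ but yields $v_j^{\otimes(a+b)}$ (no coefficient) in $\Ga^{a+b}V$ — and even in the symmetric case, once one expands in the monomial basis the leading coefficient of the relevant monomial is a unit because the inputs are powers of a \emph{single} basis vector per slot. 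So no characteristic obstruction arises; the images are genuine permutations of basis elements, not merely spanning sets. With linear independence and spanning established over $V=k^n$ for all large $n$, the statement for general $\la,\mu$ (padding with zeros) follows, and invariance under base change (noted in the text) handles arbitrary $k$. Finally, the footnote identification of the Schur algebra basis is immediate from part~(1) by summing over $\la,\mu\in\La(n,d)$ and invoking Example~\ref{ex:green}.
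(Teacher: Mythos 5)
Your proof takes exactly the paper's route: reduce via Lemma~\ref{le:mor} to computing $\g_A$ on the weight generator $v_\mu=v_1^{\otimes\mu_1}\otimes\cdots\otimes v_n^{\otimes\mu_n}$, observe that the image is the monomial basis tensor $v_A=v_{\a^1}\otimes\cdots\otimes v_{\a^n}$ with $\a^i_j=a_{ij}$, and note that the $v_A$ run through a $k$-basis of $\Ga^\la V$ as $A$ varies (cf.\ Example~\ref{ex:weyl}); part~(2) is analogous. Your main computation and the independence/spanning bookkeeping are correct, and the closing base-change remark is superfluous since the argument already produces an honest $k$-basis directly over any commutative $k$.

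However, the aside on structure constants has the divided and symmetric cases reversed. In $\Ga V$ the product of $v_j^{\otimes a}$ and $v_j^{\otimes b}$ (same $j$) equals $\binom{a+b}{a}\,v_j^{\otimes(a+b)}$, since the multiplication in \eqref{eq:comult} sums over $\binom{a+b}{a}$ cosets each of which fixes $v_j^{\otimes(a+b)}$; it is in $S V$ that $v_j^a\cdot v_j^b=v_j^{a+b}$ is coefficient-free, not the other way round. This inversion is harmless here, because the only products that actually occur in computing $\g_A(v_\mu)$ and $\s_A(v_\mu)$ are products of $v_1^{\otimes a_{i1}},\ldots,v_n^{\otimes a_{in}}$, i.e.\ powers of \emph{pairwise distinct} basis vectors, and those always produce a single basis element with coefficient exactly $1$ — which is the fact your main argument correctly uses. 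Still, you should correct or omit the aside; as written it singles out the one case where the constant is not $1$ in $\Ga V$ and then misstates what that constant is, which would mislead a reader checking the constants.
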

\begin{proof}
  We may assume that $\la,\mu\in\La(n,d)$ and apply
  Lemma~\ref{le:mor}. Fix a free $k$-module $V$ with basis
  $\{v_1,\ldots,v_n\}$. Then we have an isomorphism
\[\Hom_{\Ga^d_k}(\Ga^\mu,\Ga^\la)\xto{\sim}
\Hom_{S_k(n,d)}(\Ga^\mu V,\Ga^\la V)\xto{\sim}(\Ga^\la V)_\mu.\] A
standard morphism $\g_A$ evaluated at $V$ takes the element
$v_1^{\otimes\mu_1}\otimes\cdots\otimes v_n^{\otimes\mu_n}$ to
$v_A=v_{\a^1}\otimes\cdots\otimes v_{\a^n}$ with $\a^i\in\La(n,\la_i)$
and $\a^i_j=a_{ij}$.  Now the assertion of part (1) follows from the
fact that the elements $v_A$ form a basis of $\Ga^\la V$ as $\mu$ runs
through $\La(n,d)$; cf.\ Example~\ref{ex:weyl}.

The proof of part (2) is analogous.
\end{proof}

For example, let $\la= (5,3,3,2)$ and $\mu=(1,3,3,2,2,2)$. For
\[A=\smatrix{1&2&2&0&0&0\\
0&1&1&0&1&0\\
0&0&0&2&0&1\\
0&0&0&0&1&1
}
\]
the morphism $\g_A$ evaluated at $V=k^6$ takes
$v_1^{\otimes\mu_1}\otimes\cdots\otimes v_6^{\otimes\mu_6}$ to the
  element 
  \[(v_1 ( v_2 \otimes v_2)( v_3\otimes v_3))\otimes (v_2 v_3
  v_5)\otimes ((v_4\otimes v_4) v_6)\otimes (v_5 v_6).\] 

\begin{exm}  
The special case $\la=(1,\ldots,1)=\mu$ yields the
  isomorphism \[\End_{\Ga^d_k}(\Ga^{(1,\ldots,1)})\cong k\mathfrak
  S_d.\]
\end{exm}

Let $\la$ be a partition and $T$ a filling of the corresponding Young
diagram. The \emph{content} of $T$ is by definition the sequence
$\mu=(\mu_1,\mu_2,\ldots)$ such that $\mu_i$ equals the number of
times the integer $i$ occurs in $T$.

\begin{exm}\label{ex:weyl}
  Let $\la=(\la_1,\ldots,\la_n)$ be a partition and set $V=k^n$. For a
  filling $T$ of the corresponding Young diagram with entries in
  $\{1,\ldots,n\}$, the element $v_T$ belongs to $(\Ga^\la V)_\mu$
  where $\mu$ equals the content of $T$.  The standard morphism
  $\g_A\colon\Ga^\mu\to\Ga^\la$ given by
  $a_{ij}=\card\{t\mid T(i,t)=j\}$ and evaluated at $V$ sends
  $v_1^{\otimes\mu_1}\otimes\cdots\otimes v_n^{\otimes\mu_n}$ to
  $v_T$.  If $\mu=\la$, then $T$ is the unique Young tableau such that
  all boxes of  the $i$th row have entry $i$.
\end{exm}

\subsection*{The dominance order}
We consider the \emph{dominance order} on the set of partitions of
weight $d$. Thus $\mu\le \la$ if $\sum_{i=1}^r\mu_i\le
\sum_{i=1}^r\la_i$ for all integers $r\ge 1$.

The following simple lemma explains the relevance of Young tableaux.

\begin{lem}\label{le:young}
  \pushQED{\qed} Let $\la$ and $\mu$ be partitions. Then there exists
  a Young tableau of shape $\la$ with content $\mu$ if and only if
  $\mu\le \la$.\qedhere
\end{lem}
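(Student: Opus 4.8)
The plan is to prove Lemma~\ref{le:young} by exhibiting explicit constructions in both directions, exploiting the fact that a Young tableau of shape $\la$ imposes a strict increase down columns, which is exactly what forces the dominance inequality.

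For the forward direction, suppose $T$ is a Young tableau of shape $\la$ with content $\mu$. Fix $r\ge 1$. The key observation is that the entries of $T$ that are $\le r$ are confined to the first $r$ rows: indeed, since each column of $T$ is strictly increasing, a box in row $i$ has entry at least $i$, so any box with entry $\le r$ lies in a row with index $\le r$. Therefore the total number of boxes with entry in $\{1,\ldots,r\}$, which is $\sum_{i=1}^r \mu_i$, is at most the number of boxes in the first $r$ rows, which is $\sum_{i=1}^r\la_i$. This gives $\sum_{i=1}^r\mu_i\le\sum_{i=1}^r\la_i$ for all $r$, i.e.\ $\mu\le\la$.

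For the converse, assume $\mu\le\la$ and construct a tableau greedily. One natural recipe: fill the Young diagram of $\la$ column by column from left to right, and within each column from top to bottom, placing the smallest available entries (taking $\mu_1$ copies of $1$, then $\mu_2$ copies of $2$, and so on) — equivalently, fill the rows of $\la$ left to right using entries $1,1,\ldots$ ($\mu_1$ times), then $2$'s, etc., but distributing so that column-strictness holds. A cleaner way to see existence: process the parts of $\mu$ one at a time; having placed the entries $1,\ldots,i-1$ so that the filled region is a Young diagram (a partition shape) $\nu^{(i-1)}$ with a valid tableau, add the $\mu_i$ copies of $i$ one per row, into the rows of smallest current length (so that the result is again a partition shape and column-strictness is preserved because each new $i$ sits strictly below an entry $<i$ or in row $1$). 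The hypothesis $\mu\le\la$ is exactly what guarantees that at each stage enough rows of $\la$ have room — one checks inductively that the partial shape $\nu^{(i)}$ satisfies $\nu^{(i)}\le\la$ and $\sum_j\nu^{(i)}_j=\mu_1+\cdots+\mu_i$, so the process never overflows the diagram of $\la$ and terminates with a tableau of shape $\la$ and content $\mu$.

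The main obstacle is the bookkeeping in the converse: one must verify that the greedy placement keeps the partial shape a genuine partition and that $\mu\le\la$ prevents running out of room. This is the standard combinatorial content behind the existence of semistandard tableaux with prescribed shape and content (the Kostka numbers $K_{\la\mu}$ being nonzero precisely when $\mu\le\la$), so I would either include the short inductive argument sketched above or simply cite a standard reference such as \cite[\S8.1]{Fu1997}. Since the lemma is labelled "simple" and used only to organise Young tableaux, a brief proof along these lines suffices.
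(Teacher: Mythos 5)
The paper gives no proof of Lemma~\ref{le:young}: it is stated as a known combinatorial fact and closed immediately with $\qed$. So your task was really to supply the folklore argument, and the comparison is against the standard proof rather than against anything in the text. Your forward direction is exactly that standard argument and is correct: column-strictness forces every entry in row $i$ to be at least $i$, so all entries $\le r$ sit in rows $1,\ldots,r$, giving $\sum_{i\le r}\mu_i\le\sum_{i\le r}\la_i$.

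The converse is the nontrivial half (positivity of the Kostka number $K_{\la\mu}$ when $\mu\le\la$), and your sketch is the right strategy --- build up the partial shapes $\nu^{(0)}\subseteq\nu^{(1)}\subseteq\cdots\subseteq\nu^{(n)}=\la$ letter by letter, each skew $\nu^{(i)}/\nu^{(i-1)}$ a horizontal strip of size $\mu_i$, and use $\mu\le\la$ to show one never runs out of room --- but the description of the greedy step is imprecise. The phrase ``one per row'' is not literally what happens (all $\mu_1$ copies of $1$ must go in row~$1$, since a $1$ in any row $i\ge 2$ would sit below another $1$), and ``rows of smallest current length'' needs the qualifier ``among rows where a placement keeps $\nu^{(i)}\subseteq\la$ and the added cells form a horizontal strip''; otherwise, e.g.\ for $\la=(4,1)$, $\mu=(3,2)$ the naive version tries to cram both $2$'s into row~$2$. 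What is actually being built is the dominance-preserving filling, and the inductive claim you state, that $\nu^{(i)}\le\la$ with $|\nu^{(i)}|=\mu_1+\cdots+\mu_i$ at each stage, is the right invariant; proving it carefully is the ``bookkeeping'' you flag. Since the lemma is presented in the paper as background, your instinct to cite \cite[\S8.1]{Fu1997} (or \cite{We2003}) for the precise verification is appropriate; if you want a self-contained proof, the cleanest route is to place the $i$'s greedily from the bottom row upward and check the horizontal-strip and containment conditions from the dominance inequalities for $r=i,i-1,\ldots$, rather than to aim for ``rows of smallest length''.
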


The next proposition describes the weight spaces for Schur and Weyl
functors.

\begin{prop}\label{pr:weyl}
  Let $\la$ and $\mu$ be partitions of weight $d$.
\begin{enumerate}
\item $\Hom_{\Ga^d_k}(\Ga^\mu,W_\la)\neq 0$ if and only if $\mu\le\la$. Moreover,
  $\Hom_{\Ga^d_k}(\Ga^\la,W_\la)\cong k$. 
\item $\Hom_{\Ga^d_k}(\Ga^\mu,S_\la)\neq 0$ if and only if $\mu\le\la$. Moreover,
  $\Hom_{\Ga^d_k}(\Ga^\la,S_\la)\cong k$. 
\end{enumerate}
\end{prop}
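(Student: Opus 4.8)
The plan is to prove (1) for Weyl functors and then deduce (2) from (1) by duality, since $W_\la^\circ\cong S_\la$ and the duality preserves weight spaces by Lemma~\ref{le:duality}. For (1), I would use the weight space identification of Lemma~\ref{le:mor}: setting $V=k^n$ with $n$ large enough that $\la,\mu\in\La(n,d)$, we have $\Hom_{\Ga^d_k}(\Ga^\mu,W_\la)\xto{\sim}(W_\la V)_\mu$, the $\mu$-weight space of the Weyl module $W_\la V$. So the task reduces to computing the $\mu$-weight space of $W_\la V$ as a $k$-module.

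The key input is the explicit basis from Proposition~\ref{pr:basis}(2): the canonical surjection $\Ga^\la V\to W_\la V$ sends the elements $v_T$, as $T$ ranges over Young tableaux of shape $\la$ with entries in $\{1,\ldots,n\}$, to a $k$-basis of $W_\la V$. By Example~\ref{ex:weyl}, each such $v_T$ lies in the weight space $(\Ga^\la V)_\mu$ where $\mu$ is the content of $T$, and since the surjection $\Ga^\la V\to W_\la V$ is a morphism in $\Rep\Ga^d_k$ it is compatible with the weight decompositions. Hence $(W_\la V)_\mu$ has a $k$-basis indexed by the Young tableaux of shape $\la$ and content $\mu$. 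Now invoke Lemma~\ref{le:young}: such a Young tableau exists if and only if $\mu\le\la$ in the dominance order. This gives the first assertion of (1). For the second, when $\mu=\la$ Example~\ref{ex:weyl} tells us there is exactly one Young tableau of shape $\la$ and content $\la$ — the one whose $i$th row consists entirely of $i$'s — so $(W_\la V)_\la$ is free of rank one, i.e. $\Hom_{\Ga^d_k}(\Ga^\la,W_\la)\cong k$.

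For (2), apply the circle duality. We have $\Hom_{\Ga^d_k}(\Ga^\mu,S_\la)\cong\Hom_{\Ga^d_k}(\Ga^\mu,W_\la^\circ)$, and using $(\Ga^\mu)^\circ\cong S^\mu$ together with the natural isomorphism $\Hom_{\Ga^d_k}(X,Y^\circ)\cong\Hom_{\Ga^d_k}(Y,X^\circ)$ one rewrites this in terms of $W_\la$ again; alternatively, and more directly, combine Lemma~\ref{le:duality} with part (1) evaluated at $V^*$, noting $(S_\la V)_\mu\cong (W_\la V^*)_\mu^*$ and that $V^*$ is again free of the same rank. Either way the rank of the weight space is unchanged, so the nonvanishing criterion and the rank-one statement transfer verbatim.

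The main obstacle is really just making sure the weight-space bookkeeping is airtight: one must check that the canonical map $\Ga^\la\to W_\la$ is genuinely a morphism of strict polynomial functors so that it respects the $\La(n,d)$-grading of evaluations at $k^n$ (this is built into the definition of $W_\la$ as the image of a natural transformation), and that the basis of Proposition~\ref{pr:basis}(2), a priori a statement about a fixed free module, is compatible with the identification in Lemma~\ref{le:mor} for the specific generator $v_1^{\otimes\mu_1}\otimes\cdots\otimes v_n^{\otimes\mu_n}$. Once that is granted the argument is a direct translation between Young tableaux and weight vectors, with Lemma~\ref{le:young} supplying the combinatorial heart. No hard computation is needed beyond what is already packaged in Propositions~\ref{pr:basis} and the lemmas cited.
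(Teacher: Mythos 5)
Your argument is exactly the paper's: identify $\Hom_{\Ga^d_k}(\Ga^\mu,W_\la)$ with the $\mu$-weight space of $W_\la(k^n)$ via Lemma~\ref{le:mor}, compute that weight space from the Young-tableau basis of Proposition~\ref{pr:basis} and Example~\ref{ex:weyl}, invoke Lemma~\ref{le:young} for the dominance-order criterion, and deduce (2) from (1) by $S_\la\cong W_\la^\circ$ together with Lemma~\ref{le:duality}. The proposal is correct and matches the paper's proof in both structure and ingredients.
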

\begin{proof}
  We apply Lemma~\ref{le:mor}.  The assertion for $W_\la$ then follows
  from the computation in Example~\ref{ex:weyl} and
  Lemma~\ref{le:young}, using the basis of a Weyl module from
  Proposition~\ref{pr:basis}. For $S_\la$ the assertion follows from
  the first part since $S_\la\cong W^\circ_\la$, using Lemma~\ref{le:duality}.
\end{proof}

\subsection*{Standard objects}

Let $\la$ be a partition of weight $d$. For  $X\in\Rep\Ga^d_k$ and any
partition $\mu$ of weight $d$  we
define the \emph{trace}
\[\tr_\mu X=\sum_{\p\colon\Ga^\mu\to X}\Im\p.\]
The \emph{standard object} corresponding to $\la$ is by definition
\[\Delta(\la)=\Ga^\la/\big(\sum_{\mu\not\le\la}\tr_\mu\Ga^\la\big)\]
where $\mu$ runs though all partitions of weight $d$. This yields an
exact sequence
\begin{equation}\label{eq:weyl}
0\lto \sum_{\mu\not\le\la}\tr_\mu\Ga^\la\lto \Ga^\la\lto\Delta(\la)\lto 0.
\end{equation}

\begin{thm}\label{th:weight}
  Let $\la$ be a partition of weight $d$.  The canonical morphism
  $\Ga^\la\to\Delta(\la)$ induces isomorphisms
\[W_\la\xto{\sim} \Delta(\la)\quad\text{and}\quad
\Hom_{\Ga^d_k}(\Delta(\la),\Delta(\la))\xto{\sim}\Hom_{\Ga^d_k}(\Ga^\la,\Delta(\la))
\xto{\sim} k.\]
\end{thm}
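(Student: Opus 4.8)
The plan is to identify $\Delta(\la)$ with the Weyl functor $W_\la$ by
analysing the canonical surjection $\Ga^\la\to\Delta(\la)$ and comparing
its kernel with the known kernel of $\Ga^\la\to W_\la$. First I would use
Proposition~\ref{pr:weyl}(1): the map $\Ga^\la\to W_\la$ kills every
$\tr_\mu\Ga^\la$ with $\mu\not\le\la$, since any $\p\colon\Ga^\mu\to W_\la$
with $\mu\not\le\la$ is zero. Hence $\Ga^\la\to W_\la$ factors through
$\Delta(\la)$, giving a surjection $q\colon\Delta(\la)\to W_\la$. To go the
other way I would invoke the explicit presentation of Weyl modules (the
references after Proposition~\ref{pr:basis}, e.g.\ \cite[\S2.1]{We2003} or
\cite[\S8.1]{Fu1997}): the kernel of $\Ga^\la\to W_\la$ is generated by
images of certain ``straightening'' or shuffle maps $\Ga^\nu\to\Ga^\la$,
and one checks that each such $\nu$ satisfies $\nu\not\le\la$. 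This shows
$\Ker(\Ga^\la\to W_\la)\subseteq\sum_{\mu\not\le\la}\tr_\mu\Ga^\la$, so $q$
is also injective; thus $W_\la\xto{\sim}\Delta(\la)$.

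For the Hom computation I would argue as follows. The surjection
$\Ga^\la\to\Delta(\la)$ induces an injection
$\Hom_{\Ga^d_k}(\Delta(\la),\Delta(\la))\hookrightarrow
\Hom_{\Ga^d_k}(\Ga^\la,\Delta(\la))$, so it suffices to show both of these
are $k$ and that the composite $k\to\Hom_{\Ga^d_k}(\Delta(\la),\Delta(\la))$
sending $1$ to the identity is onto the right-hand side. By the
identification $\Delta(\la)\cong W_\la$ just established,
$\Hom_{\Ga^d_k}(\Ga^\la,\Delta(\la))\cong\Hom_{\Ga^d_k}(\Ga^\la,W_\la)\cong k$
by Proposition~\ref{pr:weyl}(1). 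The generator of this $k$-module is the
canonical map $\Ga^\la\to W_\la\cong\Delta(\la)$ itself, which visibly
factors through $\Delta(\la)$ via the identity. Hence every morphism
$\Ga^\la\to\Delta(\la)$ factors through $\Delta(\la)$, the injection above
is an isomorphism, and $\Hom_{\Ga^d_k}(\Delta(\la),\Delta(\la))\cong k$.

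The step I expect to be the main obstacle is pinning down the kernel of
$\Ga^\la\to W_\la$ precisely enough to see it is contained in
$\sum_{\mu\not\le\la}\tr_\mu\Ga^\la$: this requires the presentation of
Weyl modules by generators and relations from \cite{ABW1982} (or the
expositions cited), and identifying the ``source'' weights $\nu$ of the
defining relations as being non-dominated by $\la$. The reverse
containment and the Hom statements are then essentially formal consequences
of Proposition~\ref{pr:weyl} together with the Yoneda isomorphism
(Lemma~\ref{le:mor}). One could alternatively argue by a dimension/rank
count over a free module $V=k^n$ using the bases in
Proposition~\ref{pr:basis}, but the presentation-theoretic route is
cleaner and base-change invariant.
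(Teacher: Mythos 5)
Your proof takes essentially the same route as the paper: both identify the kernel of $\Ga^\la\to W_\la$ with $\sum_{\mu\not\le\la}\tr_\mu\Ga^\la$ by using the ABW presentation \eqref{eq:presentation} (whose relation weights $\la(i,t)$ all satisfy $\la(i,t)\not\le\la$) in one direction and Proposition~\ref{pr:weyl} with projectivity of the $\Ga^\mu$ in the other. For the Hom statement the paper simply applies $\Hom_{\Ga^d_k}(-,\Delta(\la))$ to the exact sequence \eqref{eq:weyl} and invokes Proposition~\ref{pr:weyl} again, which sidesteps the tableau-basis argument (that the canonical map is a generator of $\Hom_{\Ga^d_k}(\Ga^\la,W_\la)\cong k$) that you use and leave somewhat implicit, but the underlying content is the same.
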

\begin{proof}
 The proof of \cite[Theorem~ II.3.16]{ABW1982} (which amounts to a
 proof of Proposition~\ref{pr:basis})
  shows that the functor $W_\la$ admits a presentation
\begin{equation}\label{eq:presentation}
  \bigoplus_{i\ge
    1}\bigoplus_{t=1}^{\la_{i+1}}\Ga^{\la(i,t)}\stackrel{\a}\lto\Ga^\la\lto
  W_\la\lto 0
\end{equation}
 where
\[\la(i,t)=(\la_1,\ldots,\la_{i-1},\la_{i}+t,\la_{i+1}-t,\la_{i+2},\ldots)\]
and $\Ga^{\la(i,t)}\to\Ga^\la$ is the standard morphism $\g_A$ given by the matrix
\[A=\diag(\la_1,\la_2,\ldots)+t E_{i+1,i}-tE_{i+1,i+1}.\] 

On the other hand, the definition of $\Delta(\la)$ yields a presentation 
\[\bigoplus_{\Ga^\mu\to\Ga^\la}\Ga^\mu \stackrel{\b}\lto\Ga^\la\lto
\Delta(\la)\lto 0\] where $\Ga^\mu\to\Ga^\la$ runs through all
morphisms such that $\mu\not\le\la$.

The morphism $\a$ factors through $\b$, since $\la(i,t)\not\le\la$ for
all pairs $i,t$. Conversely, $\b$ factors though $\a$, since
$\Hom_{\Ga^d_k}(\Ga^\mu,W_\la)= 0$ for all $\mu\not\le\la$ by
Proposition~\ref{pr:weyl}, and each $\Ga^\mu$ is projective.  It
follows that the canonical morphism $\Ga^\la\to \Delta(\la)$ induces
an isomorphism $W_\la\xto{\sim} \Delta(\la)$.

For the other pair of isomorphisms apply
$\Hom_{\Ga^d_k}(-,\Delta(\la))$ to the exact sequence \eqref{eq:weyl}
and use again Proposition~\ref{pr:weyl}.
\end{proof}

\subsection*{Costandard objects} The duality yields an analogue of
Theorem~\ref{th:weight} for Schur functors. The \emph{costandard
  object} corresponding to a partition $\la$ is by definition
  \[\nabla(\la)=\bigcap_{\mu\not\le\la}\rej_\mu S^\la\quad\text{with}\quad \rej_\mu
  X=\bigcap_{\p\colon X\to S^\mu}\Ker\p.\]

\begin{cor}\label{co:weight}
  Let $\la$ be a partition of weight $d$.  The canonical morphism
  $\nabla(\la)\to S^\la$ induces isomorphisms
 \[\nabla(\la)\xto{\sim} S_\la\quad\text{and}\quad
  \Hom_{\Ga^d_k}(\nabla(\la),\nabla(\la))\xto{\sim}\Hom_{\Ga^d_k}(\nabla(\la),S^\la)
  \xto{\sim} k.\]
Moreover, the canonical morphism $\Ga^\la\to\Delta(\la)$ induces isomorphisms
\[
\Hom_{\Ga^d_k}(\Delta(\la),\nabla(\la))\xto{\sim}\Hom_{\Ga^d_k}(\Ga^\la,\nabla(\la))
\xto{\sim} k.\]
\end{cor}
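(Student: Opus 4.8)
The plan is to obtain Corollary~\ref{co:weight} from Theorem~\ref{th:weight} purely formally, by dualising. Recall the duality $(-)^\circ$ on $\Rep\Ga^d_k$ sends $\Ga^\la$ to $S^\la$, sends $W_\la$ to $S_\la$, and converts $\Hom_{\Ga^d_k}(X,Y)$ into $\Hom_{\Ga^d_k}(Y^\circ,X^\circ)$ for functors taking values in $\P_k$. First I would observe that the subfunctor $\sum_{\mu\not\le\la}\tr_\mu\Ga^\la$ of $\Ga^\la$, when dualised, becomes precisely the quotient map $S^\la\to\bigl(\sum_{\mu\not\le\la}\tr_\mu\Ga^\la\bigr)^\circ$; dually, the image of $\Ga^\mu\to\Ga^\la$ dualises to the cokernel of $(\Ga^\la)^\circ\to(\Ga^\mu)^\circ$, i.e.\ to $S^\la\to S^\mu$, so that $\tr_\mu\Ga^\la$ dualises to $\rej_\mu S^\la$. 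Hence applying $(-)^\circ$ to the defining exact sequence \eqref{eq:weyl} for $\Delta(\la)$ yields exactly the defining exact sequence
\[0\lto\nabla(\la)\lto S^\la\lto\Bigl(\sum_{\mu\not\le\la}\tr_\mu\Ga^\la\Bigr)^{\!\circ}\lto 0\]
for $\nabla(\la)$ (with the inclusion $\nabla(\la)\hookrightarrow S^\la$ being the dual of $\Ga^\la\twoheadrightarrow\Delta(\la)$). Then the isomorphism $W_\la\xto{\sim}\Delta(\la)$ from Theorem~\ref{th:weight}, dualised, gives $\nabla(\la)\xto{\sim}W_\la^\circ\cong S_\la$, and the isomorphism $\Hom_{\Ga^d_k}(\Delta(\la),\Delta(\la))\xto{\sim}\Hom_{\Ga^d_k}(\Ga^\la,\Delta(\la))\xto{\sim}k$, dualised, gives the corresponding statement for $\nabla(\la)$: $\Hom_{\Ga^d_k}(\nabla(\la),\nabla(\la))\xto{\sim}\Hom_{\Ga^d_k}(\nabla(\la),S^\la)\xto{\sim}k$.

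The only genuinely new computation is the last pair of isomorphisms, $\Hom_{\Ga^d_k}(\Delta(\la),\nabla(\la))\xto{\sim}\Hom_{\Ga^d_k}(\Ga^\la,\nabla(\la))\xto{\sim}k$. For this I would apply $\Hom_{\Ga^d_k}(-,\nabla(\la))$ to the exact sequence \eqref{eq:weyl}, getting
\[0\lto\Hom_{\Ga^d_k}(\Delta(\la),\nabla(\la))\lto\Hom_{\Ga^d_k}(\Ga^\la,\nabla(\la))\lto\Hom_{\Ga^d_k}\Bigl(\sum_{\mu\not\le\la}\tr_\mu\Ga^\la,\nabla(\la)\Bigr).\]
The third term vanishes: any morphism $\Ga^\mu\to\Ga^\la$ with $\mu\not\le\la$ composed with $\Ga^\la\to\nabla(\la)$ must be zero because $\Hom_{\Ga^d_k}(\Ga^\mu,\nabla(\la))\cong\Hom_{\Ga^d_k}(\Ga^\mu,S_\la)=0$ for $\mu\not\le\la$ by Proposition~\ref{pr:weyl}(2) (using $\nabla(\la)\cong S_\la$), and $\sum_{\mu\not\le\la}\tr_\mu\Ga^\la$ is generated by the images of such $\Ga^\mu$; hence the first map above is an isomorphism. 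It remains to identify $\Hom_{\Ga^d_k}(\Ga^\la,\nabla(\la))$ with $k$: by the Yoneda-type computation this is $\nabla(\la)(k^n)_\la\cong S_\la(k^n)_\la$, which is the span of the single basis vector $\hat v_T$ for the unique Young tableau $T$ of shape $\la$ and content $\la$ (Example~\ref{ex:weyl}, Proposition~\ref{pr:basis}(1)), hence free of rank one over $k$.

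The main obstacle, such as it is, is purely bookkeeping: being careful that $(-)^\circ$ really converts the trace construction into the reject construction and the quotient $\Delta(\la)$ into the subobject $\nabla(\la)$, rather than something merely isomorphic in a non-canonical way — one wants the \emph{canonical} morphism $\nabla(\la)\to S^\la$ to be identified with $(\Ga^\la\to\Delta(\la))^\circ$ on the nose, so that the diagram of maps in the statement commutes. This is a matter of tracking that $\Im$ dualises to $\Coker^\circ$ and that all the functors involved ($\Ga^\la$, $S^\la$, $W_\la$, $S_\la$, $\Delta(\la)$, $\nabla(\la)$) take values in $\P_k$ so that the evaluation map $X\to X^{\circ\circ}$ is an isomorphism and $(-)^\circ$ is an exact anti-equivalence on the relevant subcategory. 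Granting that, the corollary is a formal consequence of Theorem~\ref{th:weight} plus the vanishing input from Proposition~\ref{pr:weyl}.
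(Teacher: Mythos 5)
Your proposal is correct and follows essentially the same route as the paper's proof: obtain $\nabla(\la)\cong\Delta(\la)^\circ\cong S_\la$ and the first set of isomorphisms by dualising Theorem~\ref{th:weight}, then apply $\Hom_{\Ga^d_k}(-,\nabla(\la))$ to the exact sequence \eqref{eq:weyl} and invoke Proposition~\ref{pr:weyl} for the second set. You simply supply more of the bookkeeping (trace dualising to reject, $\Hom(\Ga^\la,\nabla(\la))\cong k$ via the weight space) that the paper leaves implicit.
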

\begin{proof}
  From the definition we have $\nabla(\la)\cong\Delta(\la)^\circ$
  since $S^\mu\cong (\Ga^\mu)^\circ$ for each partition $\mu$. Thus
  the first set of isomorphisms follows directly from
  Theorem~\ref{th:weight} by applying the duality.

  For the last pair of isomorphisms apply
  $\Hom_{\Ga^d_k}(-,\nabla(\la))$ to the exact sequence
  \eqref{eq:weyl} and use Proposition~\ref{pr:weyl}.
\end{proof}
\subsection*{Simple objects}

We describe the simple objects in $\Rep\Ga^d_k$ provided that $k$ is a
local ring. For a partition $\la$ of weight $d$, consider the
subobject
\[U(\la)=\sum_{\mu<\la}\tr_\mu \De(\la)
+\Big(\sum_{\p\colon\Ga^\la\to\De(\la)}\Im\p\Big)\subseteq\De(\la)\]
where $\p$ runs through all morphisms corresponding to non-invertible
elements in $\Hom_{\Ga^d_k}(\Ga^\la,\De(\la))\cong k$, and set
\[L(\la)=\Delta(\la)/U(\la).\]

\begin{prop}\label{pr:standard}
Suppose that $k$ is a local ring and fix a partition $\la$ of weight $d$.
Then the functor $U(\la)$ is the unique maximal subobject of
  $\Delta(\la)$  and $L(\la)$ is a simple object in $\Rep\Ga^d_k$.
\end{prop}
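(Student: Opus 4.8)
The plan is to identify $\Delta(\la)(V)$ for $V = k^n$ (with $\la$ a partition of $d$ having at most $n$ parts) with a module over the Schur algebra $S = S_k(n,d)$, and to use the weight-space decomposition from Lemma~\ref{le:mor} together with the local ring hypothesis to pin down the radical. Writing $R$ for the Jacobson radical of $S$, the claim is that $U(\la)(V)$ is precisely $\Delta(\la)(V)\cdot R$, equivalently the unique maximal submodule. Since $\Delta(\la)\cong W_\la$ by Theorem~\ref{th:weight}, the functor $\Delta(\la)$ takes values in $\P_k$, so this pointwise analysis is faithful: a subfunctor is determined by its value at $V=k^n$ once $n$ is large enough (and in fact $n$ equal to the number of parts of $\la$ suffices for everything visible in degree $d$). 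So I would first reduce the whole statement to a statement about the $S$-module $M = \Delta(\la)(V)$.

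Next I would analyse the weight spaces. By Lemma~\ref{le:mor} and Theorem~\ref{th:weight}, $M_\la = \Hom_{\Ga^d_k}(\Ga^\la,\Delta(\la)) \cong k$, and by the presentation argument in Theorem~\ref{th:weight} combined with Proposition~\ref{pr:weyl}, $M_\mu = \Hom_{\Ga^d_k}(\Ga^\mu,\Delta(\la)) = 0$ unless $\mu \le \la$. So $\la$ is the top weight and it occurs with multiplicity one. The key point is then: any proper subfunctor $N \subsetneq \Delta(\la)$ must have $N_\la = 0$. Indeed if $N_\la = M_\la = k$, then $N$ contains the image of some $\p\colon \Ga^\la \to \Delta(\la)$ corresponding to a generator of $\Hom \cong k$, i.e.\ an isomorphism (using that $k$ is local, so $k^\times$ is the complement of the maximal ideal and the canonical $\p$ is surjective onto $\Delta(\la)$); hence $N = \Delta(\la)$, a contradiction. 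Therefore every proper subfunctor is contained in $\{X : X_\la = 0\}$.

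Then I would show that $U(\la)$, as defined, is exactly the subfunctor whose value at $V$ is $\bigoplus_{\mu\ne\la} M_\mu$ together with the non-generating part of $M_\la$ — but since any proper subfunctor has $M_\la$-component zero and hence sits inside $\bigoplus_{\mu<\la}M_\mu \oplus (\text{non-units})\cdot M_\la$, and $U(\la)$ is by construction the sum of all such pieces, $U(\la)$ is the largest proper subfunctor. Concretely: $\sum_{\mu<\la}\tr_\mu\Delta(\la)$ captures all weight contributions in weights $\mu < \la$ (every morphism $\Ga^\mu \to \Delta(\la)$ lands here, and these generate all of $\bigoplus_{\mu<\la}M_\mu$ as a subfunctor because $\bigoplus_\mu \Ga^\mu = \Ga^{d,V}$ is a projective generator), while $\sum_{\p \text{ non-unit}} \Im\p$ handles the contribution at weight $\la$ coming from scalar multiples of $\p$ by the maximal ideal of $k$. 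Any subfunctor $N \subsetneq \Delta(\la)$ satisfies $N \subseteq U(\la)$: the weight-$\mu$ part for $\mu \not\le \la$ is zero since $M_\mu = 0$; for $\mu < \la$ it lies in $\tr_\mu\Delta(\la)$; and the weight-$\la$ part, being a proper $k$-submodule of $k$, is annihilated modulo the non-unit images. Hence $L(\la) = \Delta(\la)/U(\la)$ has no proper nonzero subfunctor, i.e.\ is simple.

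The main obstacle I anticipate is the careful bookkeeping in showing $U(\la)(V) = \bigoplus_{\mu<\la}M_\mu \,\oplus\, \frm_k M_\la$ where $\frm_k$ is the maximal ideal of $k$ — i.e.\ verifying that the sum of traces $\sum_{\mu<\la}\tr_\mu$ really exhausts the lower weight spaces and does not accidentally reach the top weight space, and that the non-unit images contribute exactly $\frm_k M_\la$ and nothing more. The first point uses that $\Ga^\mu$ for $\mu$ running over $\La(n,d)$ assemble to the projective generator $\Ga^{d,V}$, so that the subfunctor generated by all lower-weight morphisms has the expected value; the second uses $\Hom_{\Ga^d_k}(\Ga^\la,\Delta(\la))\cong k$ with a chosen generator $\p$ that is surjective, so $\Im(\lambda\p) = \lambda\cdot\Delta(\la)$-ish and the non-units give exactly $\frm_k\cdot(\text{image of }\p)$ within each weight. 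Once that computation is in place, simplicity and maximality of $U(\la)$ are immediate from the multiplicity-one-top-weight structure.
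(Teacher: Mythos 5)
Your approach is essentially the paper's, reformulated through evaluation at $V=k^n$ and weight spaces rather than through traces in the functor category; these viewpoints are interchangeable via Lemma~\ref{le:mor}. Both arguments rest on the same three facts: $\Hom_{\Ga^d_k}(\Ga^\la,\De(\la))\cong k$ (Theorem~\ref{th:weight}), $\Hom_{\Ga^d_k}(\Ga^\mu,\De(\la))=0$ for $\mu\not\le\la$ (Proposition~\ref{pr:weyl}), and locality of $k$ so that every $\p\colon\Ga^\la\to\De(\la)$ is either a unit multiple of the canonical surjection (hence epi) or lies in $\frm\cdot\Hom_{\Ga^d_k}(\Ga^\la,\De(\la))$.

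One intermediate sentence is false as stated: ``any proper subfunctor $N\subsetneq\De(\la)$ must have $N_\la=0$.'' Your justification establishes only $N_\la\neq M_\la$, i.e.\ that $N_\la$ is a \emph{proper} $k$-submodule of $M_\la\cong k$, hence contained in $\frm$; when $k$ is local but not a field this is strictly weaker (e.g.\ $N=\frm\De(\la)$ has $N_\la=\frm\neq 0$). You in fact correct this in the closing paragraph (``the weight-$\la$ part, being a proper $k$-submodule of $k$, is annihilated modulo the non-unit images''), so the argument goes through, but the earlier sentence and the ensuing ``Therefore every proper subfunctor is contained in $\{X:X_\la=0\}$'' should be deleted or replaced by ``must have $N_\la\subsetneq M_\la$.'' A second, smaller slip: a generator of $\Hom_{\Ga^d_k}(\Ga^\la,\De(\la))\cong k$ gives an \emph{epimorphism}, not an isomorphism (the canonical map $\Ga^\la\to\De(\la)$ has nonzero kernel in general); the surrounding parenthetical shows you have the right picture.

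You also correctly flag the one genuinely nontrivial verification, namely that $\sum_{\mu<\la}\tr_\mu\De(\la)$ does not reach a unit in the top weight space, which is exactly what is needed for $U(\la)\subsetneq\De(\la)$. Note that your stated identity $U(\la)(V)=\bigoplus_{\mu<\la}M_\mu\oplus\frm M_\la$ is stronger than necessary: for unique maximality one only needs that the weight-$\la$ component of $U(\la)(V)$ is contained in $\frm$, together with ``proper $N\Rightarrow N\subseteq U(\la)$,'' and the latter you already have without the identity. The paper's terse proof (show every $X\subsetneq\De(\la)$ lies in $U(\la)$) leaves the same properness of $U(\la)$ implicit; the cleanest route is to reduce modulo $\frm$ and invoke the classical fact that a Weyl module over a field has a simple top, i.e.\ its radical meets the highest weight line trivially.
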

\begin{proof}
  Let $X\subseteq \Delta(\la)$ be a subobject.  If $\tr_\la X= 0$,
  then $X\subseteq U(\la)$. If $\tr_\la X\neq 0$, then there is a
  nonzero morphism $\p\colon \Ga^\la\to X\rightarrowtail \Delta(\la)$,
  which is an epimorphism if and only if $\p$ corresponds to an
  invertible element, by Theorem~\ref{th:weight}. This follows by
  restricting $\p$ to the weight space for $\la$. Thus $U(\la)$ is the
  unique maximal subobject of $\Delta(\la)$ and $L(\la)$ is simple.
\end{proof}

The duality maps the unique simple quotient of $\Delta(\la)$ to
the unique simple subobject of $\nabla(\la)$. Next we show that the socle
of $\nabla(\la)$ is isomorphic to $L(\la)$.

\begin{lem}\label{le:simple}
  Let $S\in \Rep\Ga^d_k$ be simple and
  $\la=\max\{\mu\mid\Hom_{\Ga^d_k}(\Ga^\mu,S)\neq 0\}$. Then $S\cong
  L(\la)$.
\end{lem}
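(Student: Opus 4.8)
The plan is to realize $S$ as a quotient of the standard object $\Delta(\la)$ and then use the fact (Proposition~\ref{pr:standard}, valid because $k$ is local, the standing assumption of this subsection) that $U(\la)$ is the \emph{unique} maximal subobject of $\Delta(\la)$.

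Since $S\neq 0$ it has a nonzero weight space, so $\la$ belongs to the finite nonempty set $M(S)=\{\mu\mid\Hom_{\Ga^d_k}(\Ga^\mu,S)\neq 0\}$ and we may pick a nonzero morphism $\p\colon\Ga^\la\to S$; as $S$ is simple, $\p$ is an epimorphism. The key step is to show that $\p$ factors through the canonical surjection $\Ga^\la\to\Delta(\la)$, that is, that $\p$ annihilates $\sum_{\mu\not\le\la}\tr_\mu\Ga^\la$. Since $\tr_\mu\Ga^\la=\sum_{\psi\colon\Ga^\mu\to\Ga^\la}\Im\psi$, it suffices to check that $\p\psi=0$ for every $\mu\not\le\la$ and every $\psi\colon\Ga^\mu\to\Ga^\la$. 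Here I would argue by contradiction: if $\p\psi\neq 0$, then $\Im(\p\psi)$ is a nonzero subobject of the simple object $S$, hence $\p\psi\colon\Ga^\mu\to S$ is an epimorphism, so $\mu\in M(S)$ and therefore $\mu\le\la$ because $\la=\max M(S)$ — contradicting $\mu\not\le\la$.

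Once this factorization is in place we obtain an epimorphism $\bar\p\colon\Delta(\la)\twoheadrightarrow S$. Its kernel is a proper subobject of $\Delta(\la)$ with simple quotient $S$, hence a maximal subobject, so $\Ker\bar\p=U(\la)$ by Proposition~\ref{pr:standard}; thus $S\cong\Delta(\la)/U(\la)=L(\la)$. The only delicate point is the factorization step, and it is essential there that $\la$ be the \emph{maximum} of $M(S)$ and not merely a maximal element. (Conversely, once the lemma is proved one checks, using Theorem~\ref{th:weight} and Proposition~\ref{pr:weyl}, that the weights of $L(\la)$ all lie below $\la$ while $\Ga^\la\to\Delta(\la)\to L(\la)$ is nonzero, so this maximum genuinely exists for every simple $S$.)
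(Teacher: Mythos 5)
Your proof is correct and follows exactly the approach taken in the paper: pick a nonzero morphism $\Gamma^\lambda\to S$, show it factors through the canonical surjection $\Gamma^\lambda\to\Delta(\lambda)$, and conclude via Proposition~\ref{pr:standard}. The paper leaves the factorization step unexplained, whereas you spell it out via the observation that a nonzero $\phi\psi\colon\Gamma^\mu\to S$ would put $\mu$ in $M(S)$; your side remark that this genuinely requires $\lambda$ to be the maximum (not merely a maximal element) of $M(S)$ with respect to the dominance order is a worthwhile clarification of what the lemma's hypothesis is really doing.
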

\begin{proof}
  Choose a nonzero morphism $\Ga^\la\to S$. This factors through the
  canonical morphism $\Ga^\la\to\Delta(\la)$. Thus $S\cong L(\la)$ by
  Proposition~\ref{pr:standard}.
\end{proof}

\begin{prop}\label{pr:simple}
Let $\la$ be a partition. Then $L(\la)^\circ\cong L(\la)$.
\end{prop}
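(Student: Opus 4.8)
The plan is to exploit the fact, established in Proposition~\ref{pr:standard} and Lemma~\ref{le:simple}, that $L(\la)$ is characterised among simple objects in $\Rep\Ga^d_k$ by the identity $\la=\max\{\mu\mid\Hom_{\Ga^d_k}(\Ga^\mu,L(\la))\neq 0\}$. Since $L(\la)^\circ$ is again simple (duality is an exact contravariant self-equivalence sending $\Ga^\mu=\Hom_{\Ga^d\P_k}(k^{\ell(\mu)},-)$-built objects to themselves up to the standard identifications, and in particular preserving simplicity because it is an anti-equivalence of the abelian category $\Rep\Ga^d_k$ onto itself), it suffices by Lemma~\ref{le:simple} to show that $L(\la)^\circ$ has the same ``top weight'' $\la$, i.e.\ that
\[
\Hom_{\Ga^d_k}(\Ga^\mu, L(\la)^\circ)\neq 0\iff \mu\le\la.
\]

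First I would rewrite the relevant Hom-space using the duality adjunction from the ``Duality'' subsection, namely $\Hom_{\Ga^d_k}(X,Y^\circ)\cong\Hom_{\Ga^d_k}(Y,X^\circ)$, together with $(\Ga^\mu)^\circ\cong S^\mu$. Applying this with $X=\Ga^\mu$ and $Y=L(\la)$ gives
\[
\Hom_{\Ga^d_k}(\Ga^\mu,L(\la)^\circ)\cong\Hom_{\Ga^d_k}(L(\la),(\Ga^\mu)^\circ)\cong\Hom_{\Ga^d_k}(L(\la),S^\mu).
\]
So the task reduces to showing $\Hom_{\Ga^d_k}(L(\la),S^\mu)\neq 0$ exactly when $\mu\le\la$. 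Now I would bring in the costandard object $\nabla(\la)\cong S_\la$: by Proposition~\ref{pr:standard} and the remark following it, the unique simple \emph{subobject} of $\nabla(\la)$ is $L(\la)^\circ$'s dual, hence $L(\la)$ itself (the duality interchanges the simple top of $\De(\la)$ with the simple socle of $\nabla(\la)$, and both are $L(\la)$ once we invoke Proposition~\ref{pr:simple}—but to avoid circularity I would instead note directly that $\soc\nabla(\la)\cong(\top\De(\la))^\circ\cong L(\la)^\circ$, and it is \emph{this} object whose top weight I want). To sidestep the circularity cleanly, the cleanest route is: apply duality to the defining inclusion $L(\la)^\circ\hookrightarrow\nabla(\la)\cong S_\la$ — no, better: work entirely with $\De$. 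From the surjection $\De(\la)\twoheadrightarrow L(\la)$, dualising gives an inclusion $L(\la)^\circ\hookrightarrow\De(\la)^\circ\cong\nabla(\la)\cong S_\la$. Then for any $\mu$, $\Hom_{\Ga^d_k}(\Ga^\mu,L(\la)^\circ)\hookrightarrow\Hom_{\Ga^d_k}(\Ga^\mu,S_\la)$ (using projectivity of $\Ga^\mu$, so Hom is exact), and by Proposition~\ref{pr:weyl}(2) the latter vanishes for $\mu\not\le\la$; thus the top weight of $L(\la)^\circ$ is $\le\la$.

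For the reverse inequality, I must show $\Hom_{\Ga^d_k}(\Ga^\la,L(\la)^\circ)\neq 0$, equivalently (by the above) that the weight-$\la$ space of $L(\la)^\circ$ is nonzero, equivalently that the weight-$\la$ space of $L(\la)$ is nonzero — and this follows because the composite $\Ga^\la\twoheadrightarrow\De(\la)\twoheadrightarrow L(\la)$ is nonzero on the $\la$-weight space: by Theorem~\ref{th:weight} the map $\Ga^\la\to\De(\la)$ restricts to an isomorphism on weight-$\la$ spaces (both are $k$ via $\Hom_{\Ga^d_k}(\Ga^\la,-)$), and the passage to $L(\la)$ only kills the submodule $U(\la)$, which by construction contains no nonzero piece of the $\la$-weight line (that is precisely the content of the restriction-to-the-$\la$-weight-space argument in the proof of Proposition~\ref{pr:standard}). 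Hence $L(\la)(k^{\ell(\la)})_\la\neq 0$, so by Lemma~\ref{le:duality} $L(\la)^\circ(k^{\ell(\la)})_\la\neq 0$, giving $\Hom_{\Ga^d_k}(\Ga^\la,L(\la)^\circ)\neq 0$. Combining the two inequalities, $\max\{\mu\mid\Hom_{\Ga^d_k}(\Ga^\mu,L(\la)^\circ)\neq 0\}=\la$, and since $L(\la)^\circ$ is simple, Lemma~\ref{le:simple} forces $L(\la)^\circ\cong L(\la)$.

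The main obstacle I anticipate is the bookkeeping needed to verify that $L(\la)^\circ$ is genuinely simple and that duality really does send simples to simples in $\Rep\Ga^d_k$ — this is where one must be careful, because $(-)^\circ$ is only an anti-equivalence on the subcategory of representations taking values in $\P_k$, and a priori $L(\la)$ need not take projective values. I would resolve this either by checking $L(\la)$ has finite-rank-free (hence projective, since $k$ is local) values — which follows from its being a subquotient of $\Ga^\la$, all of whose evaluations are finitely generated projective, and a subquotient of a finitely generated projective module over a local ring is finitely generated free — or, more robustly, by observing that $(-)^\circ$ is exact and faithful on all of $\Rep\Ga^d_k$ and that a nonzero proper subobject $Y\subsetneq L(\la)^\circ$ would dualise (after applying $\circ$ again and using $L(\la)^{\circ\circ}\cong L(\la)$, valid since $L(\la)$ has projective values) to a nonzero proper quotient of $L(\la)$, contradicting simplicity. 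Everything else is a routine assembly of Theorem~\ref{th:weight}, Proposition~\ref{pr:weyl}, Lemmas~\ref{le:mor}, \ref{le:duality}, \ref{le:simple}, and Proposition~\ref{pr:standard}.
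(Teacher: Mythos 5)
Your overall strategy agrees with the paper's --- identify the top weight of $L(\la)^\circ$ and invoke Lemma~\ref{le:simple} --- but your route to the top weight is more roundabout. The paper's proof consists of applying Lemma~\ref{le:duality} to every $\mu$ at once: for $V=k^n$ one has $L(\la)^\circ(V)_\mu\cong L(\la)(V^*)_\mu^*\cong L(\la)(V)_\mu^*$, and combined with Lemma~\ref{le:mor} this shows that $\Hom_{\Ga^d_k}(\Ga^\mu,L(\la)^\circ)$ and $\Hom_{\Ga^d_k}(\Ga^\mu,L(\la))$ vanish simultaneously for all $\mu$, so the top weight of $L(\la)^\circ$ is $\la$ and Lemma~\ref{le:simple} finishes. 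You instead split the computation into an upper bound (via the embedding $L(\la)^\circ\hookrightarrow\nabla(\la)\cong S_\la$ together with Proposition~\ref{pr:weyl}(2)) and a lower bound (via Lemma~\ref{le:duality} applied only to the $\la$-weight space), which pulls in Corollary~\ref{co:weight} and the Schur-functor machinery when Lemma~\ref{le:duality} alone gives the full weight-space comparison in one stroke.

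Two of your side remarks are false as stated, though both become harmless when $k$ is a field. First, ``a subquotient of a finitely generated projective module over a local ring is finitely generated free'' is not true: $k/\frm$ is a quotient of the free module $k$ but is $k$-free only when $\frm=0$. In fact any simple object of $\Rep\Ga^d_k$ that is finitely generated over $k$ is annihilated by $\frm$ (Nakayama), so $L(\la)$ takes values in $\P_k$ only when $k$ is a field; this subtlety affects the paper's own two-line proof in exactly the same way, so it is not specific to your argument. Second, $(-)^\circ$ is not an anti-equivalence of the whole category $\Rep\Ga^d_k$ onto itself --- even over a field it is a genuine duality only on the subcategory $\rep\Ga^d_k$ of functors valued in $\P_k$ --- which is precisely why the question of $L(\la)$ having projective values arises in the first place. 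Over a field these concerns evaporate and the rest of your argument is sound.
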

\begin{proof}
 The assertion follows from Lemma~\ref{le:simple} using  Lemma~\ref{le:duality}.
\end{proof}

\begin{cor}\label{co:simples}
  \pushQED{\qed} Suppose that $k$ is a local ring and let $\La$ denote
  the set of partitions of weight $d$.  Then $\{L(\la)\}_{\la\in\La}$
  is a representative set of simple objects in $\Rep\Ga^d_k$. \qedhere
\end{cor}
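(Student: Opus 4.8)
The plan is to show that $\{L(\la)\}_{\la\in\La}$ is simultaneously a set of pairwise non-isomorphic simple objects and an exhaustive list of all simple objects in $\Rep\Ga^d_k$. Pairwise non-isomorphism is immediate from Lemma~\ref{le:simple}: if $L(\la)\cong L(\mu)$ then applying $\Hom_{\Ga^d_k}(\Ga^\nu,-)$ and taking the maximum $\nu$ for which this is nonzero gives $\la=\mu$, since by construction $\la=\max\{\nu\mid\Hom_{\Ga^d_k}(\Ga^\nu,L(\la))\neq 0\}$ (the weight space of $\Delta(\la)$ at $\la$ is $k$ by Theorem~\ref{th:weight} and survives in the quotient $L(\la)$, while all weights $\nu\not\le\la$ are killed already in $\Delta(\la)$, and weights $\nu<\la$ are killed in passing to $L(\la)$ by definition of $U(\la)$). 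Exhaustiveness is the content of Lemma~\ref{le:simple}: any simple $S$ admits a nonzero morphism from some $\Ga^\mu$ — indeed $S(k^n)\neq 0$ for $n$ large by the equivalence with $\Mod S_k(n,d)^\op$, hence $\Hom_{\Ga^d_k}(\Ga^{d,k^n},S)\neq 0$, and the canonical decomposition $\Ga^{d,k^n}=\bigoplus_{\mu\in\La(n,d)}\Ga^\mu$ forces $\Hom_{\Ga^d_k}(\Ga^\mu,S)\neq 0$ for some $\mu$ — so taking $\la$ maximal among such $\mu$ gives $S\cong L(\la)$.

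Concretely, I would write: let $S$ be any simple object of $\Rep\Ga^d_k$. Since the finitely generated projectives are the summands of finite sums of the $\Ga^\mu$ and these generate the category, there is some partition $\mu$ of weight $d$ with $\Hom_{\Ga^d_k}(\Ga^\mu,S)\neq 0$; set $\la=\max\{\mu\mid\Hom_{\Ga^d_k}(\Ga^\mu,S)\neq 0\}$, which exists because the relevant set is a finite subset of the (dominance-ordered) partitions of $d$ — here one should note that while the dominance order is only a partial order, the set of $\mu$ in question is finite, so I would instead take $\la$ to be any \emph{maximal} element of this finite set. Lemma~\ref{le:simple} then applies verbatim once one checks its proof only used maximality, giving $S\cong L(\la)$. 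Combined with the non-isomorphism statement above, $\{L(\la)\}_{\la\in\La}$ is a representative set.

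One subtlety worth flagging: Lemma~\ref{le:simple} as stated uses $\la=\max\{\mu\mid\ldots\}$, suggesting a unique maximum; the cleanest route is to observe that for a \emph{simple} $S$ the set $\{\mu\mid\Hom_{\Ga^d_k}(\Ga^\mu,S)\neq 0\}$ does in fact have a unique maximal element, because if $\mu$ is maximal then the nonzero map $\Ga^\mu\to S$ factors through $\Ga^\mu\to\Delta(\mu)$ (any larger weight is absent in $S$, so the map kills $\tr_\nu\Ga^\mu$ for $\nu\not\le\mu$), forcing $S\cong L(\mu)$, and $L(\mu)$ determines $\mu$. So uniqueness of the maximum and the identification $S\cong L(\la)$ come together. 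I expect the only real obstacle to be this bookkeeping around maximality versus uniqueness in the dominance order; everything else is a direct assembly of Proposition~\ref{pr:standard}, Lemma~\ref{le:simple}, and the equivalence \eqref{eq:schur} with modules over the Schur algebra, which guarantees every simple object is nonzero on some $k^n$ and hence receives a map from some $\Ga^\mu$.
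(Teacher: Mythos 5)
Your overall route is the same as the paper's (the corollary is stated with a bare \verb|\qedhere| and is meant to follow directly from Proposition~\ref{pr:standard} and Lemma~\ref{le:simple}), and you correctly flag the one genuine subtlety: Lemma~\ref{le:simple} is phrased with $\la=\max\{\mu\mid\Hom_{\Ga^d_k}(\Ga^\mu,S)\neq 0\}$, and in a partial order like dominance the existence of a maximum is not automatic. Your non-isomorphism and exhaustiveness steps are fine, and the observation that any simple $S$ receives a nonzero map from some $\Ga^\mu$ (via the projective generator $\Ga^{d,k^n}$ and the decomposition \eqref{eq:decomp}) is exactly right.

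However, your justification of the key factoring step is not quite airtight as written. You say that if $\mu$ is maximal then the nonzero map $\Ga^\mu\to S$ factors through $\Ga^\mu\to\De(\mu)$ ``because any larger weight is absent in $S$, so the map kills $\tr_\nu\Ga^\mu$ for $\nu\not\le\mu$''. Maximality of $\mu$ only tells you that weights $\nu>\mu$ are absent; the set $\{\nu\mid\nu\not\le\mu\}$ also contains weights incomparable to $\mu$, which you have not ruled out. As stated, the parenthetical doesn't explain why those traces are also killed, so this step has a real gap. The gap is closeable, but it requires an extra input from the paper that you do not invoke: in the proof of Theorem~\ref{th:weight}, the kernel of $\Ga^\mu\to\De(\mu)$ is shown to be generated by the images of the standard morphisms $\g_A\colon\Ga^{\mu(i,t)}\to\Ga^\mu$ from the presentation \eqref{eq:presentation}, and each $\mu(i,t)$ is \emph{strictly greater} than $\mu$ in the dominance order. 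Thus $\sum_{\nu\not\le\mu}\tr_\nu\Ga^\mu=\sum_{\nu>\mu}\tr_\nu\Ga^\mu$, and killing the traces from strictly greater weights (which maximality does give you) already kills the whole kernel. Once you make this explicit, the rest of your argument — $S\cong L(\mu)$, and $\mu$ is recovered from $L(\mu)$ as its unique top weight, forcing the maximal element to be the maximum — goes through.
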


\section{The Cauchy decomposition}

The Cauchy decomposition formula for Schur functors
\cite{ABW1982,DEP1980,DRS1974} is the analogue of Cauchy's formula for
symmetric functions \cite{Ca1841}. More precisely, the term `Cauchy
decomposition' refers to a filtration of symmetric powers whose
associated graded object is a direct sum of Schur functors. One
obtains the formula for symmetric functions by passing in
characteristic zero from polynomial representations of general linear
groups to their characters.

Fix $V,W\in\P_k$. For any non-negative integer $r$ there is a unique map
\[\psi^r\colon \Ga^r V\otimes \Ga^r W\lto \Ga^r(V\otimes W)\]
making the following square commutative.
\[\begin{tikzcd}
  \Ga^r V\otimes \Ga^r
  W\arrow{r}{\psi^r}\arrow[tail]{d}&
  \Ga^r(V\otimes W)\arrow[tail]{d}\\
  V^{\otimes r}\otimes W^{\otimes r}\arrow{r}{\sim}& (V\otimes
  W)^{\otimes r}
\end{tikzcd}\]
Extend this map for a partition
$\la=(\la_1,\ldots,\la_n)$ of weight $d$ to a map \[\psi^\la \colon
\Ga^\la V\otimes \Ga^\la W\lto \Ga^d(V\otimes W)\] which is given
as composite
\begin{multline*}
  \Ga^\la V\otimes \Ga^\la W\stackrel{\sim}\lto(\Ga^{\la_1} V\otimes
  \Ga^{\la_1} W)\otimes \cdots\otimes (\Ga^{\la_n} V\otimes
  \Ga^{\la_n} W)
  \xto{\psi^{\la_1}\otimes\cdots\otimes\psi^{\la_n}}\\
  \Ga^{\la_1}(V\otimes W)\otimes\cdots\otimes \Ga^{\la_n}(V\otimes W)\lto
  \Ga^d(V\otimes W)
\end{multline*}
with the last map  given by multiplication.

\subsection*{The lexicographic order}

We consider the \emph{lexicographic order} on the set of partitions of
weight $d$. Thus $\mu\le \la$ if for any integer $r\ge 1$ we have
$\mu_r\le \la_r$ whenever $\mu_i=\la_i$ for all $i<r$.  For a
partition $\la$ let $\la^-$ denote its immediate predecessor and
$\la^+$ its immediate successor. Set $(1,\ldots,1)^-=-\infty$ and
$(d)^+=+\infty$.

Note that the dominance order implies the lexicographic order.

\subsection*{The Cauchy filtration}

The \emph{Cauchy filtration} is by definition the chain
\begin{equation}\label{eq:cauchy}
0=F_{+\infty}\subseteq F_{(d)}\subseteq  F_{(d-1,1)}\subseteq \ldots \subseteq
 F_{(2,1,\ldots,1)}\subseteq F_{(1,\ldots,1)}=\Ga^d(V\otimes W)
\end{equation}
where $F_\la=\sum_{\mu\ge\la}\Im\psi^{\mu}$.

The following result describes the factors of the Cauchy filtration;
it is the analogue of \cite[Theorem~III.1.4]{ABW1982} for the Cauchy
filtration of $S^d(V\otimes W)$.\footnote{The Cauchy filtration of
  $S^d(V\otimes W)$ is obtained from \eqref{eq:cauchy} by duality. The
  approach via maps $\La^\la V\otimes\La^\la W\to S^d(V\otimes W)$
  seems to be more complicated.}

\begin{thm}[{\cite[Theorem~III.2.9]{HaKu1992}}]\label{th:cauchy}
Let $V,W\in\P_k$ and fix a partition $\la$ of weight $d$. Then the
  morphism $\psi^{\la}\colon\Ga^{\la} V\otimes \Ga^{\la} W\to
  F_\la$ induces an isomorphism
\[\Delta(\la) V\otimes \Delta(\la) W\stackrel{\sim}\lto F_\la/F_{\la^+}\]
which is functorial in $V$ and $W$ (with respect to morphisms in $\Ga^d\P_k$).
\end{thm}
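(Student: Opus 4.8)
The plan is to compute the associated graded object of the Cauchy filtration \eqref{eq:cauchy} by producing a surjection $\Delta(\la)V\otimes\Delta(\la)W\to F_\la/F_{\la^+}$ and an injection in the reverse direction, and then to check that both are inverse to the map induced by $\psi^\la$. First I would observe that, by construction, $F_\la=\sum_{\mu\ge\la}\Im\psi^\mu$ where $\ge$ is the lexicographic order, so $\psi^\la$ does indeed land in $F_\la$ and its image together with $F_{\la^+}$ spans $F_\la$; hence the induced map $\Ga^\la V\otimes\Ga^\la W\to F_\la/F_{\la^+}$ is surjective. The key is then to show this map factors through $\Delta(\la)V\otimes\Delta(\la)W$, i.e.\ that the kernel of $\Ga^\la\twoheadrightarrow\Delta(\la)$ — which by \eqref{eq:weyl} and Theorem~\ref{th:weight} is $\sum_{\nu\not\le\la}\tr_\nu\Ga^\la$ — is killed after tensoring with $\Ga^\la W$ (and symmetrically in the second variable) and passing to $F_\la/F_{\la^+}$. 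Concretely, for a standard morphism $\g_A\colon\Ga^\nu\to\Ga^\la$ with $\nu\not\le_{\mathrm{dom}}\la$, the composite $\Ga^\nu V\otimes\Ga^\la W\to\Ga^\la V\otimes\Ga^\la W\xto{\psi^\la}\Ga^d(V\otimes W)$ should be expressible, by naturality/associativity of divided powers and the defining square for $\psi^r$, as a sum of maps $\psi^{\nu'}$ with $\nu'\ge_{\mathrm{lex}}\la$ but $\nu'\neq\la$ (using that dominance implies lexicographic order, so $\nu\not\le_{\mathrm{dom}}\la$ forces the relevant indices strictly above $\la$ in the lex order to appear), hence lands in $F_{\la^+}$. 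This is the step I expect to be the main obstacle: one must carefully track how $\psi^\la\circ(\g_A\otimes\id)$ decomposes into $\psi^{\nu'}$'s, which requires a combinatorial bookkeeping of how the product maps $\bigotimes_j\Ga^{a_{ij}}\to\Ga^{\la_i}$ interact with the multiplication $\Ga^{\la_1}(V\otimes W)\otimes\cdots\to\Ga^d(V\otimes W)$, and the corresponding partition refinement argument.

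Granting that factorisation (and its mirror image in $W$), the map $\psi^\la$ descends to a natural surjection $\Delta(\la)V\otimes\Delta(\la)W\to F_\la/F_{\la^+}$. For injectivity I would pass to the case where $V$ and $W$ are free of large rank and argue on bases, invoking Proposition~\ref{pr:basis}(2): $\Delta(\la)V=W_\la V$ has a basis indexed by Young tableaux of shape $\la$ with entries in $\{1,\dots,\operatorname{rank}V\}$, so $\Delta(\la)V\otimes\Delta(\la)W$ has a basis indexed by pairs $(T,T')$ of such tableaux. The images $\psi^\la(v_T\otimes v_{T'})$ in $\Ga^d(V\otimes W)$ — where $V\otimes W$ has the obvious basis $e_i\otimes f_j$ — are divided-power monomials tagged by "double tableaux", and the classical Cauchy/straightening combinatorics (as in \cite{DEP1980,DRS1974} and the dual statement \cite[Theorem~III.1.4]{ABW1982}) shows these are linearly independent modulo the span of the $\psi^\mu$-images with $\mu>_{\mathrm{lex}}\la$, i.e.\ modulo $F_{\la^+}$. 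Since all the maps in sight are natural in $V,W\in\Ga^d\P_k$ and the statement is stable under base change and under passage to direct summands, establishing the isomorphism for free modules of sufficiently large rank over $\bbZ$ suffices to conclude it in general; functoriality is automatic because every map in the construction of $\psi^\la$ is built from the structure maps \eqref{eq:comult} and the natural iso $V^{\otimes r}\otimes W^{\otimes r}\cong(V\otimes W)^{\otimes r}$. Finally, comparing the basis $\psi^\la(v_T\otimes v_{T'})$ with the surjection already obtained shows the two maps are mutually inverse, completing the proof.

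One alternative, cleaner route I would also consider: deduce this directly from the dual statement \cite[Theorem~III.1.4]{ABW1982} or \cite[Theorem~III.2.9]{HaKu1992} (which is cited). Applying the duality $(-)^\circ$ of $\Rep\Ga^d_k$, together with $(\Ga^\la)^\circ\cong S^\la$, $\Delta(\la)^\circ\cong\nabla(\la)\cong S_\la$ (Corollary~\ref{co:weight}), and the fact that duality reverses inclusions, should convert the Cauchy filtration of $\Ga^d(V\otimes W)$ into the known Cauchy filtration of $S^d(V^*\otimes W^*)$ with factors $S_\la(V^*)\otimes S_\la(W^*)$, and hence transport the theorem verbatim. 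Since the paper attributes the result to \cite{HaKu1992}, I would present the proof as: verify the surjectivity of $\psi^\la$ onto $F_\la/F_{\la^+}$ and the factorisation through $\Delta(\la)V\otimes\Delta(\la)W$ as above, then invoke the basis from Proposition~\ref{pr:basis} to get a dimension (rank) count matching on the associated graded over $\bbZ$, which forces the surjection to be an isomorphism; functoriality in $V,W$ holds by naturality of every ingredient.
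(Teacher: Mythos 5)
Your high-level architecture is right and agrees with the paper's: show $\psi^\la$ surjects onto $F_\la/F_{\la^+}$, then show the map factors through $\Delta(\la)V\otimes\Delta(\la)W$, then count ranks to conclude the induced map is an isomorphism. You also correctly identify the factorisation as the crux. Where you diverge from the paper --- and where your plan gets stuck, as you yourself flag --- is in \emph{how} to control the relations of $\Delta(\la)$. You propose to work with the entire defining kernel $\sum_{\nu\not\le_{\mathrm{dom}}\la}\tr_\nu\Ga^\la$ and all standard morphisms $\g_A\colon\Ga^\nu\to\Ga^\la$, and to unravel $\psi^\la\circ(\g_A V\otimes\id_W)$ into a sum of $\psi^{\nu'}$'s; the combinatorial bookkeeping you anticipate there is genuinely painful and you do not sketch how to close it. The paper instead invokes the Akin--Buchsbaum--Weyman presentation~\eqref{eq:presentation} of $W_\la\cong\Delta(\la)$, whose relations form the small explicit family of standard morphisms $\g_A\colon\Ga^{\la(i,t)}\to\Ga^\la$ with $\la(i,t)=(\la_1,\ldots,\la_i+t,\la_{i+1}-t,\ldots)$ and $A=\diag(\la_1,\la_2,\ldots)+tE_{i+1,i}-tE_{i+1,i+1}$. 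For these, $\psi^\la\circ(\g_A V\otimes\g_A W)$ is literally a scalar multiple of the single map $\psi^{\la(i,t)}$ (not an unwieldy sum), and $\la(i,t)>\la$ lexicographically because the first $i-1$ entries agree and the $i$th strictly increases, so its image sits in $F_{\la^+}$ directly. That one substitution --- replace ``all $\nu\not\le\la$'' by the presentation~\eqref{eq:presentation} --- is the idea you are missing and would make the step you call the ``main obstacle'' tractable.

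Two smaller remarks. First, the isomorphism at the end is obtained in the paper by the same rank count you invoke (``as in the proof of \cite[Theorem~III.1.4]{ABW1982}''), resting on the tableau basis of Proposition~\ref{pr:basis}; your instinct to reduce to free modules of large rank and base-change is consistent with the paper's framework. Second, your proposed alternative route --- dualise the $S^d(V\otimes W)$ Cauchy filtration from \cite[Theorem~III.1.4]{ABW1982} --- is plausible but goes in the opposite direction from the paper: the paper treats \eqref{eq:cauchy} for $\Ga^d$ as primary and \emph{derives} the $S^d$ version by duality, with a footnote explicitly noting that the exterior-power route $\La^\la V\otimes\La^\la W\to S^d(V\otimes W)$ ``seems to be more complicated.''
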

\begin{proof}
  From the presentation \eqref{eq:presentation} of
  $W_\la\cong\Delta(\la)$ we deduce that there is a morphism
  $\bar\psi^\la$ making the following square commutative.
\[\begin{tikzcd}
\Ga^\la V\otimes \Ga^\la W\arrow{r}{\psi^\la}\arrow[two heads]{d}& F_\la\arrow[two heads]{d}\\
\Delta(\la) V\otimes \Delta(\la) W\arrow{r}{\bar\psi^\la} &F_\la/F_{\la^+}
\end{tikzcd}\] More precisely, consider the standard morphism
$\g_A\colon \Ga^{\la(i,t)}\to\Ga^\la$ arising in
\eqref{eq:presentation}.  The composition
\[\Ga^{\la(i,t)}V \otimes \Ga^{\la(i,t)}W\xto{\g_A V\otimes\g_A W}
\Ga^\la V\otimes \Ga^\la W\xto{\psi^\la} \Ga^d(V\otimes W)\] equals
a multiple of $\psi^{\la(i,t)}$, and we have $\Im\psi^{\la(i,t)}\subseteq F_{\la^+}$
since $\la(i,t) >\la$. This yields $\bar\psi^\la$. A computation of ranks (as in the proof
of \cite[Theorem~III.1.4]{ABW1982}) shows that $\bar\psi^\la$ is an
isomorphism.
\end{proof}

\begin{cor}\label{co:cauchy}
  Let $V\in\P_k$. There is a filtration
\[
0= X_{+\infty}\subseteq X_{(d)}\subseteq  X_{(d-1,1)}\subseteq \ldots \subseteq
 X_{(2,1,\ldots,1)}\subseteq X_{(1,\ldots,1)}=\Ga^d\Hom(V,-)
\]
such that  for each partition $\la$ of weight $d$
\[X_\la/X_{\la^+}\cong \De(\la)(V^*)\otimes\De(\la).\]
\end{cor}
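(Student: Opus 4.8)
The plan is to deduce this straight from Theorem~\ref{th:cauchy} by freezing one of the two tensor slots. First I would use the natural isomorphism of $k$-modules $\Hom(V,W)\cong V^*\otimes W$ (valid since $V\in\P_k$) and apply $\Ga^d$ to it, obtaining for every $W\in\P_k$ an isomorphism $\Ga^d(V^*\otimes W)\cong\Ga^d\Hom(V,W)$. The one thing that needs checking is that these isomorphisms are natural with respect to the divided-power morphisms, i.e.\ that they assemble into an isomorphism $\Ga^d(V^*\otimes-)\xto{\sim}\Ga^{d,V}=\Ga^d\Hom(V,-)$ in $\Rep\Ga^d_k$. This is a routine multilinear-algebra verification comparing how $\Ga^d\Hom(W,W')$ acts in the two models, and it is the only step with any content; I expect no real obstacle here.

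Next I would apply Theorem~\ref{th:cauchy} with $V^*$ in the first slot and the variable module in the second. For each partition $\mu$ of weight $d$, the construction of $\psi^\mu\colon\Ga^\mu(V^*)\otimes\Ga^\mu W\to\Ga^d(V^*\otimes W)$ is natural in $W$ with respect to morphisms of $\Ga^d\P_k$, so the subspaces $F_\la(W)=\sum_{\mu\ge\la}\Im\psi^\mu$ are carried into one another under such morphisms and therefore define a subrepresentation $W\mapsto F_\la(W)$ of $\Ga^d(V^*\otimes-)$. Transporting these along the isomorphism of the previous paragraph yields subobjects $X_\la\subseteq\Ga^d\Hom(V,-)$, and the Cauchy filtration \eqref{eq:cauchy} becomes precisely the chain $0=X_{+\infty}\subseteq X_{(d)}\subseteq\cdots\subseteq X_{(1,\ldots,1)}=\Ga^d\Hom(V,-)$; in particular the inclusions and the indexing by $\la^+$ are inherited verbatim.

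Finally, for the subquotients I would invoke the functoriality statement of Theorem~\ref{th:cauchy}: the isomorphism $\De(\la)(V^*)\otimes\De(\la)W\xto{\sim}F_\la/F_{\la^+}$ is natural in $W$ with respect to $\Ga^d\P_k$, so, reading $V^*$ as fixed and $W$ as varying, it is exactly an isomorphism of representations $X_\la/X_{\la^+}\cong\De(\la)(V^*)\otimes\De(\la)$, with $\De(\la)(V^*)$ the (constant) coefficient $k$-module. This completes the argument, the main — and genuinely minor — obstacle being the naturality check in the first paragraph.
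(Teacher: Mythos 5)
Your proposal is correct and follows essentially the same route as the paper: identify $\Ga^d\Hom(V,-)\cong\Ga^d(V^*\otimes-)$, apply the Cauchy filtration \eqref{eq:cauchy} with $V^*$ in the first slot, use naturality in $W$ to promote the $F_\la$'s to subobjects $X_\la\subseteq\Ga^d\Hom(V,-)$ in $\Rep\Ga^d_k$, and read off the subquotients from the functorial isomorphism in Theorem~\ref{th:cauchy}. The naturality check you flag in the first paragraph is indeed the one point the paper leaves implicit, and you are right that it is routine.
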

\begin{proof}
  The filtration of $\Ga^d\Hom(V,-)\cong \Ga^d(V^*\otimes -)$ is given
  by the filtration \eqref{eq:cauchy}, replacing $V$ by $V^*$ and
  using its functoriality in $W$. Thus the description of
  $X_\la/X_{\la^+}$ follows from Theorem~\ref{th:cauchy}.
\end{proof}

  The filtration of $\Ga^d\Hom(V,-)$ induces a filtration for each direct
  summand of $\Ga^d\Hom(V,-)$. This follows from the functoriality of
  the filtration \eqref{eq:cauchy} in $V$.  The canonical
  isomorphism \[\End_{\Ga^d\P_k}(V)^\op\stackrel\sim\lto\End_{\Ga^d_k}(\Ga^d\Hom(V,-))\]
  then shows that a decomposition of $\Ga^d\Hom(V,-)$ yields a
  decomposition of each factor $X_\la/X_{\la^+}$.

\begin{cor}\label{co:cauchy-lambda}
Let $\mu$ be a partition of weight $d$. There is a filtration 
\[
0= Y_{+\infty}\subseteq Y_{(d)}\subseteq  Y_{(d-1,1)}\subseteq \ldots \subseteq
 Y_{\mu^+}\subseteq Y_\mu=\Ga^\mu
\]
such that  for each partition $\la \ge\mu$
\[Y_\la/Y_{\la^+} \cong \De (\la)^{K_{\la\mu}}\]
where $K_{\la\mu}$ equals the  number of Young tableaux of shape $\la$ and
content $\mu$.
\end{cor}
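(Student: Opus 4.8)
The plan is to extract the desired filtration of $\Ga^\mu$ as a direct summand of the Cauchy filtration from Corollary~\ref{co:cauchy}. The key observation is the decomposition \eqref{eq:decomp}: taking $V$ to be a free module $k^n$ of rank $n\ge d$, we have $\Ga^{d,k^n}=\bigoplus_{\mu\in\La(n,d)}\Ga^\mu$, so $\Ga^\mu$ is a direct summand of $\Ga^{d}\Hom(k^n,-)$ via the idempotent in $S_k(n,d)^\op\cong\End_{\Ga^d_k}(\Ga^{d,k^n})$ projecting onto the $\mu$-component. By the functoriality of the Cauchy filtration \eqref{eq:cauchy} in the first variable, this idempotent acts on the whole filtration $X_{(d)}\subseteq\cdots\subseteq X_{(1,\ldots,1)}=\Ga^d\Hom(k^n,-)$, and hence cuts out a filtration $Y_{(d)}\subseteq\cdots\subseteq\Ga^\mu$ of $\Ga^\mu$ whose $\la$-th subquotient is the corresponding summand of $X_\la/X_{\la^+}\cong\De(\la)(V^*)\otimes\De(\la)$, as noted in the paragraph following Corollary~\ref{co:cauchy}.

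Next I would compute which summand this is. Since $\De(\la)\cong W_\la$ by Theorem~\ref{th:weight}, the factor $X_\la/X_{\la^+}$ evaluated in the functor variable is $\De(\la)((k^n)^*)\otimes\De(\la)=W_\la((k^n)^*)\otimes\De(\la)$, and the idempotent for $\mu$ acts on this through its action on the first tensor factor $W_\la((k^n)^*)$, picking out the weight space $W_\la((k^n)^*)_\mu$ (using that $(k^n)^*\cong k^n$ and the weight-space decomposition of Lemma~\ref{le:mor}). By Proposition~\ref{pr:basis}(2), a $k$-basis of $W_\la(k^n)_\mu$ — equivalently, of the image of the weight-$\mu$ part of $\Ga^\la(k^n)$ in $W_\la(k^n)$ — is given by the elements $v_T$ with $T$ a Young tableau of shape $\la$ and content $\mu$; by Example~\ref{ex:weyl} these are exactly the weight-$\mu$ basis vectors coming from fillings that happen to be column-strict. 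Hence $\dim_k W_\la(k^n)_\mu=K_{\la\mu}$, the number of such tableaux. Since the base ring is arbitrary and everything in sight is finitely generated free over $k$ (the $v_T$ basis works over $\bbZ$, so it is stable under base change as in the base change subsection), we conclude $W_\la((k^n)^*)_\mu$ is free of rank $K_{\la\mu}$, giving $Y_\la/Y_{\la^+}\cong\De(\la)^{K_{\la\mu}}$.

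Finally I would check the indexing: when $\la\not\ge\mu$ in the dominance order, $K_{\la\mu}=0$ by Lemma~\ref{le:young}, so the corresponding subquotient vanishes and the filtration of $\Ga^\mu$ genuinely starts at $Y_\mu=\Ga^\mu$ with the remaining steps indexed by partitions $\la\ge\mu$ (and the lexicographic refinement of the dominance order used in \eqref{eq:cauchy} is compatible with this, since dominance implies lexicographic). The main obstacle I anticipate is being careful that the idempotent decomposition is compatible with the Cauchy filtration in the right variable — i.e. that it acts on each $X_\la$ as a subfunctor, not merely on the top term — but this is precisely the functoriality of \eqref{eq:cauchy} in $V$ that was already invoked to split the factors, so no new input is needed. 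A secondary point requiring care is identifying the action of the idempotent on $\De(\la)((k^n)^*)\otimes\De(\la)$ with its action on the first factor alone; this follows because the isomorphism of Theorem~\ref{th:cauchy} is functorial in the first variable $V$ while the second variable $W$ (which becomes the functor argument) is untouched.
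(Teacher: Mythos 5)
Your proposal is correct and follows essentially the same route as the paper's proof: realise $\Ga^\mu$ as a direct summand of $\Ga^d\Hom(k^n,-)$, use the functoriality of the Cauchy filtration in $V$ to cut out a filtration of $\Ga^\mu$, identify the subquotients with $\De(\la)(k^n)_\mu\otimes\De(\la)$ via Corollary~\ref{co:cauchy}, and compute the rank of the weight space via the Young tableau basis from Proposition~\ref{pr:basis} and Example~\ref{ex:weyl}. You are in fact slightly more explicit than the paper (e.g.\ in spelling out that the idempotent acts through the first tensor factor and that $(k^n)^*\cong k^n$), but the argument is the same.
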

\begin{proof}
Let $\mu\in\La(n,d)$.  The functor $\Ga^\mu$ is a direct summand of $\Ga^d\Hom(k^n,-)$ and
  the functoriality of the filtration \eqref{eq:cauchy} in $V$ yields
  the filtration of $\Ga^\mu$ by passing for each partition $\la$ from
  $X_\la\subseteq \Ga^d\Hom(k^n,-)$ to the direct summand
  $Y_\la\subseteq \Ga^\mu$ corresponding to $\mu$. It follows from
  Corollary~\ref{co:cauchy} that for each partition $\la$
  \[Y_\la/Y_{\la^+}\cong\De(\la)(k^n)_\mu\otimes\De(\la),\]
  where $\De(\la)(k^n)_\mu$ is the weight space corresponding to
  $\mu$. This weight space can be computed and is nonzero if and only
  if $\la\ge \mu$ with respect to the dominance order, by
  Proposition~\ref{pr:weyl}. The precise description follows from the
  computation in Example~\ref{ex:weyl}, using the basis of a Weyl
  module from Proposition~\ref{pr:basis}.
\end{proof}

\begin{rem}
The number $K_{\la\mu}$ is called \emph{Kostka number}.
\end{rem}

The following is the analogue of Corollary~\ref{co:cauchy} for $S^d(V\otimes -)$.

\begin{cor}
  Let $V\in\P_k$.  There is a filtration
\[
0= Z_{-\infty}\subseteq Z_{(1,\ldots,1)}\subseteq  Z_{(2,1,\ldots,1)}\subseteq \ldots \subseteq
 Z_{(d-1,1)}\subseteq Z_{(d)}=S^d(V\otimes-)
\]
such that  for each partition $\la$ of weight $d$
\[Z_\la/Z_{\la^-}\cong \nabla(\la)V\otimes\nabla(\la).\]
\end{cor}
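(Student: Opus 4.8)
The plan is to deduce this from Corollary~\ref{co:cauchy} by applying the duality $(-)^\circ$ to the whole Cauchy filtration; this is the order reversal indicated in the footnote preceding Theorem~\ref{th:cauchy}.

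First I would identify $S^d(V\otimes-)$ with $M^\circ$, where $M:=\Ga^d\Hom(V,-)$ is the functor filtered in Corollary~\ref{co:cauchy}. Indeed $M^\circ(W)=\Ga^d\Hom(V,W^*)^*\cong\Ga^d(V^*\otimes W^*)^*\cong S^d(V\otimes W)$, where the last isomorphism uses the natural isomorphism $(\Ga^d U)^*\cong S^d(U^*)$ with $U=V^*\otimes W^*$ together with $V^{**}\cong V$, and is natural in $W\in\Ga^d\P_k$. I would then note that $(-)^\circ$ restricts to a contravariant exact functor on the full subcategory of $\Rep\Ga^d_k$ consisting of functors valued in $\P_k$ (a short exact sequence of such is pointwise split over $\P_k$, hence stays exact after applying $(-)^*$), that it sends $C\otimes-$ to $C^*\otimes-$ for a constant $C\in\P_k$, and that $\De(\la)^\circ\cong\nabla(\la)$ by Corollary~\ref{co:weight}. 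All functors occurring in the filtration of $M$ take values in $\P_k$: $\De(\la)\cong W_\la$ does by Proposition~\ref{pr:basis}(2), hence so do the subquotients $\De(\la)(V^*)\otimes\De(\la)$, and therefore also the terms $X_\la\subseteq M$.

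Then I would set $Z_\la:=(M/X_{\la^+})^\circ$, viewed as a subobject of $M^\circ\cong S^d(V\otimes-)$ through the monomorphism dual to $M\twoheadrightarrow M/X_{\la^+}$, and $Z_{-\infty}:=(M/X_{(1,\ldots,1)})^\circ=0$. Since $X_{\la^+}\subseteq X_\la$ these form an increasing chain. Dualising the exact sequence $0\to X_\la/X_{\la^+}\to M/X_{\la^+}\to M/X_\la\to 0$ gives $0\to(M/X_\la)^\circ\to Z_\la\to(X_\la/X_{\la^+})^\circ\to 0$, and $(M/X_\la)^\circ$ equals $Z_{\la^-}$ (which is $Z_{-\infty}=0$ when $\la=(1,\ldots,1)$). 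Hence
\[Z_\la/Z_{\la^-}\cong(X_\la/X_{\la^+})^\circ\cong\bigl(\De(\la)(V^*)\otimes\De(\la)\bigr)^\circ\cong\De(\la)(V^*)^*\otimes\nabla(\la)\cong\nabla(\la)V\otimes\nabla(\la),\]
the last step using $\De(\la)(V^*)^*=\De(\la)^\circ(V)=\nabla(\la)V$. This is the asserted filtration, indexed in the lexicographic order opposite to that of \eqref{eq:cauchy}. The only real point to watch is the bookkeeping of this order reversal, namely that $(-)^\circ$ matches the $\la$-graded piece to the $\la$-graded piece; the arbitrary base ring causes no trouble, since everything in sight is finitely generated projective over $k$ and $k$-linear duality is therefore exact.
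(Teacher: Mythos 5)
Your proof is correct and is essentially the paper's own argument: you dualise the Cauchy filtration of $\Ga^d\Hom(V,-)$ from Corollary~\ref{co:cauchy} to produce the filtration of $S^d(V\otimes-)\cong\Ga^d\Hom(V,-)^\circ$. Your definition $Z_\la=(M/X_{\la^+})^\circ$ agrees with the paper's ``kernel of $M^\circ\twoheadrightarrow X_{\la^+}^\circ$'' by exactness of $(-)^\circ$ on $\P_k$-valued functors, and the remaining computation of the subquotients is the same.
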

\begin{proof}
We modify the filtration of $ \Ga^d\Hom(V,-)$ from
Corollary~\ref{co:cauchy} as follows. Let $Z_\la$ denote the kernel of the
  epimorphism \[S^d(V\otimes -)\xto{\sim}
  \Ga^d\Hom(V,-)^\circ\twoheadrightarrow X_{\la^+}^\circ.\] Then we have
  \[Z_{\la}/Z_{\la^-}\cong (X_\la/X_{\la^+})^\circ\]
  which is a direct sum of copies $\De(\la)^\circ\cong \nabla(\la)$.
\end{proof}

\section{Highest weight structure}

Highest weight categories were introduced by Cline, Parshall, and
Scott \cite{CPS1988}. For the definition of a $k$-linear highest
weight category over a commutative ring $k$, see Definition~\ref{de:khwt}.

We fix a commutative ring $k$ and an integer $d\ge 0$. Let
$\rep\Ga^d_k$ denote the category of $k$-linear functors
$\Ga^d\P_k\to\P_k$, where $\P_k$ denotes the category of finitely
generated projective $k$-modules. Note that evaluation at $k^n$ gives
an equivalence $\rep\Ga^d_k\xto{\sim}\mod (S_k(n,d)^\op,k)$ for all
$n\ge d$, where $S_k(n,d)$ denotes the Schur algebra; cf.\ Example~\ref{ex:green}.

\begin{thm} 
  The category $\rep\Ga^d_k$ is a $k$-linear highest weight category
  with respect to the lexicographically ordered set of partitions of
  weight $d$. Thus there are exact sequences
\begin{equation*}
  0\lto U(\la)\lto P(\la)\lto \De(\la) \lto 0\qquad (\la\text{ a
    partition})
\end{equation*}
  in $\rep\Ga^d_k$ satisfying the following:
\begin{enumerate}
\item $\End_{\Ga^d_k}(\De(\la))\cong k$ for all $\la$.
\item $\Hom_{\Ga^d_k}(\De(\la),\De(\mu))=0$ for all $\la> \mu$. 
\item $U(\la)$ belongs to $\Filt\{\De(\mu)\mid \mu>\la\}$ for all $\la$.
\item $\bigoplus_\la P(\la)$ is a projective generator of $\rep\Ga^d_k$.
\end{enumerate}
\end{thm}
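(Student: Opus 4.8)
The plan is to verify the four axioms of Definition~\ref{de:khwt} directly, using the structural results already established for $\rep\Ga^d_k$ in the preceding sections. Note first that $\rep\Ga^d_k$ is equivalent to $\mod(S_k(n,d)^\op,k)$ for $n\ge d$ via evaluation at $k^n$, and the Schur algebra $S_k(n,d)=\End_{\Ga^d\P_k}(k^n)$ is finitely generated projective over $k$ (it has the explicit $k$-basis $\{\g_A\}$ from Lemma~\ref{le:totaro}), so the ambient hypothesis of Definition~\ref{de:khwt} holds. The standard objects will be the objects $\De(\la)$ introduced before Theorem~\ref{th:weight}, indexed by partitions of $d$ ordered lexicographically.

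First I would dispose of axioms (1) and (2). Axiom (1), namely $\End_{\Ga^d_k}(\De(\la))\cong k$, is exactly the second isomorphism in Theorem~\ref{th:weight}. For axiom (2), suppose $\la>\mu$ in the lexicographic order and consider a morphism $\De(\la)\to\De(\mu)$. Precomposing with the canonical epimorphism $\Ga^\la\twoheadrightarrow\De(\la)$ gives a morphism $\Ga^\la\to\De(\mu)$, which by the defining presentation of $\De(\mu)$ (a quotient of $\Ga^\mu$ by the traces $\tr_\nu\Ga^\mu$ for $\nu\not\le\mu$) and by Theorem~\ref{th:weight} (identifying $\De(\mu)\cong W_\mu$) factors through the weight-space analysis: by Proposition~\ref{pr:weyl} we have $\Hom_{\Ga^d_k}(\Ga^\la,W_\mu)\ne 0$ only if $\la\le\mu$ in the dominance order. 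Since dominance implies lexicographic, $\la>\mu$ lexicographically forces $\Hom_{\Ga^d_k}(\Ga^\la,\De(\mu))=0$, hence $\Hom_{\Ga^d_k}(\De(\la),\De(\mu))=0$.

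The heart of the argument is axioms (3) and (4), and this is where I would invoke the Cauchy decomposition. By the remarks following the decomposition of divided powers, every finitely generated projective object in $\rep\Ga^d_k$ is a direct summand of a finite direct sum of functors $\Ga^\mu$; so it suffices to exhibit, for a suitable projective generator, a $\De$-filtration with the correct top. Take $P=\Ga^{d,k^n}=\bigoplus_{\mu\in\La(n,d)}\Ga^\mu$ for $n\ge d$, which is a projective generator since evaluation at $k^n$ is the functor $\Hom_{\Ga^d_k}(\Ga^{d,k^n},-)$ realizing the equivalence with $S_k(n,d)^\op$-modules. By Corollary~\ref{co:cauchy-lambda}, each $\Ga^\mu$ carries a filtration with $Y_\la/Y_{\la^+}\cong\De(\la)^{K_{\la\mu}}$ for $\la\ge\mu$ in the dominance order, where $K_{\la\mu}$ is the Kostka number; in particular $K_{\mu\mu}=1$, so the bottom nonzero factor (in the lexicographic filtration, i.e.\ the one of largest index among those appearing) is exactly $\De(\mu)$ appearing once, and all other factors $\De(\la)$ have $\la\ge\mu$ in dominance, hence $\la\ge\mu$ lexicographically, hence strictly $\la>\mu$ lexicographically except for the single copy $\De(\mu)$. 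This produces, for the indecomposable summand $P(\la):=\Ga^\la$ (or more precisely one chooses the projective cover of $\De(\la)$ as a direct summand of the relevant $\Ga^\mu$), an exact sequence $0\to U(\la)\to P(\la)\to\De(\la)\to 0$ with $U(\la)\in\Filt\{\De(\mu)\mid\mu>\la\}$: this is axiom (3). Finally, since $\bigoplus_{\mu\in\La(n,d)}\Ga^\mu=\Ga^{d,k^n}$ is a projective generator and each summand is $\De$-filtered with top terms among the $\De(\la)$, the collection $\{P(\la)\}$ obtained by Krull--Schmidt decomposition of $\Ga^{d,k^n}$ (valid after base change to a local ring, or directly by the projective-cover argument of Lemma~\ref{le:standard}) has $\bigoplus_\la P(\la)$ a projective generator, giving axiom (4).

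The main obstacle, and the point requiring the most care, is matching the \emph{lexicographic} ordering demanded in the statement with the \emph{dominance} ordering in which the Cauchy filtration of Corollary~\ref{co:cauchy-lambda} is naturally expressed. The resolution is the elementary fact, recorded just before the Cauchy filtration subsection, that dominance implies lexicographic order: every factor $\De(\la)$ occurring in the filtration of $\Ga^\mu$ above the bottom satisfies $\la>\mu$ in dominance, hence $\la>\mu$ lexicographically, so the filtration of $\Ga^\mu$ already has the shape required relative to the coarser lexicographic order. A secondary point is to pass from the $\De$-filtered functors $\Ga^\mu$ to genuine projective \emph{covers} $P(\la)$ with $P(\la)\twoheadrightarrow\De(\la)$; this is handled exactly as in Lemma~\ref{le:standard}, whose hypotheses ($\De$2)--($\De$3) one checks here using Theorem~\ref{th:weight}, Proposition~\ref{pr:weyl}, and the finiteness of the $k$-modules $\Hom_{\Ga^d_k}(\Ga^\mu,\Ga^\la)$ from Lemma~\ref{le:totaro}. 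Alternatively, one can simply cite Theorem~\ref{th:standard}: conditions (1)--(3) above together with the fact that $\End_{\Ga^d_k}(\Ga^{d,k^n})\cong S_k(n,d)^\op$ is finitely generated projective over $k$ (condition (4) of that theorem) show that $\{\De(\la)\}$ are the standard objects of a $k$-linear highest weight category, and the Cauchy filtration identifies that category with $\rep\Ga^d_k$ itself.
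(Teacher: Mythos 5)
Your proof is essentially the paper's: take $P(\la)=\Ga^\la$, get (1) from Theorem~\ref{th:weight}, get (2) from Proposition~\ref{pr:weyl} combined with the fact that dominance implies lexicographic order, get (3) from Corollary~\ref{co:cauchy-lambda}, and get (4) from the decomposition \eqref{eq:decomp} of the representable functors. The one place where you introduce unnecessary complications is in your treatment of $P(\la)$ and axiom (4): you call $\Ga^\la$ an ``indecomposable summand'' (it generally is not) and then hedge by invoking projective covers, Krull--Schmidt decompositions, and base change to a local ring. None of this is needed, nor is it available over an arbitrary commutative ring $k$ --- Definition~\ref{de:khwt} does not require the $P_i$ to be indecomposable, so one simply sets $P(\la)=\Ga^\la$ and observes that $\bigoplus_\la\Ga^\la$ (over partitions $\la$ of $d$) is a projective generator because for $n\ge d$ every $\Ga^{d,k^n}$ decomposes via \eqref{eq:decomp} into summands $\Ga^\mu\cong\Ga^{\tilde\mu}$ with $\tilde\mu$ a partition. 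Also, your final ``alternatively'' paragraph citing Theorem~\ref{th:standard} conflates condition (3) of that theorem ($\Ext^1$-vanishing) with condition (3) of the present theorem ($U(\la)$ is $\De$-filtered); the $\Ext^1$-vanishing is not yet available at that point and would require a separate argument, so that alternative route as written has a gap, but your primary direct verification stands.
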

\begin{proof}
  Fix a partition $\la$ of weight $d$. We set $P(\la)=\Ga^\la$ and the
  canonical epimorphism $\Ga^\la\to\De(\la)$ yields the defining exact
  sequence, where $U(\la)=\sum_{\mu \not\le\la}\tr_\mu\Ga^\la$ (using
  the dominance order).  This gives (2) because every morphism
  $\De(\la)\to\De(\mu)$ lifts to a morphism $\Ga^\la\to\Ga^\mu$. More
  precisely, $\la>\mu$ (lexicographic order) implies $\la\not\le\mu$
  (dominance order), and therefore $\Ga^\la\to\Ga^\mu$ factors through
  $U(\mu)$.  (1) follows from Theorem~\ref{th:weight}, and (3) follows
  from Corollary~\ref{co:cauchy-lambda}. The canonical decomposition
  \eqref{eq:decomp} of each representable functor into summands of the
  form $\Ga^\la$ implies (4), since the representable functors form a
  set of projective generators of $\rep\Ga^d_k$.
\end{proof}

The module category of an algebra $A$ is a highest weight category if
and only if the algebra $A$ is quasi-hereditary \cite{CPS1988,
  Kr2015}.  Thus the equivalence \eqref{eq:schur} between
$\Rep\Ga^d_k$ and the category of modules over the Schur algebra
$S_k(n,d)$ for $n\ge d$ yields the following (see \cite[\S7]{Gr1993}
for historical comments).

\begin{cor}
\pushQED{\qed}
The Schur algebra $S_k(n,d)$ is quasi-hereditary for all $n\geq
d$.\qedhere
\end{cor}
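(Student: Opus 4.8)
The plan is to derive this directly from the preceding theorem, which exhibits $\rep\Ga^d_k$ as a $k$-linear highest weight category, together with the identification of strict polynomial functors with modules over a Schur algebra; no new ingredient is needed, so the corollary is essentially a matter of bookkeeping.

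First I would recall that for $n\ge d$ evaluation at $k^n$ gives the equivalence $\rep\Ga^d_k\xto{\sim}\mod(S_k(n,d)^\op,k)$, equivalently the equivalence \eqref{eq:schur} between $\Rep\Ga^d_k$ and $\Mod S_k(n,d)^\op$. By the theorem the left-hand side is a $k$-linear highest weight category, hence so is $\mod(S_k(n,d)^\op,k)$; and a module category is a highest weight category precisely when the underlying algebra is quasi-hereditary \cite{CPS1988,Kr2015} --- equivalently, by the definition recalled after Definition~\ref{de:khwt}, that algebra is then the endomorphism ring of a projective generator of a $k$-linear highest weight category, i.e.\ split quasi-hereditary. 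This gives that $S_k(n,d)^\op$ is (split) quasi-hereditary.

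To pass to $S_k(n,d)$ itself I would use the duality $\Hom_k(-,k)\colon\mod(\La,k)^\op\xto{\sim}\mod(\La^\op,k)$ together with the fact from Part~I that the opposite of a $k$-linear highest weight category is again one; taking $\La=S_k(n,d)^\op$ makes $\mod(S_k(n,d),k)$ a $k$-linear highest weight category, so $S_k(n,d)$ is split quasi-hereditary as well, in particular quasi-hereditary. I do not expect a genuine obstacle: the content is entirely carried by the theorem and the machinery of Part~I, and the only points deserving a moment's attention are this left--right symmetry and the harmless passage between $\rep\Ga^d_k$ and $\Rep\Ga^d_k$, which is legitimate because the functors at issue all take values in $\P_k$.
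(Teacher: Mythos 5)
Your argument is essentially the paper's: invoke the preceding theorem to see that $\rep\Ga^d_k$ is a $k$-linear highest weight category, identify this via evaluation at $k^n$ with a module category over the Schur algebra, and use the equivalence between ``highest weight module category'' and ``quasi-hereditary algebra'' from \cite{CPS1988,Kr2015}. The one place you add something is your care about the $\op$: the equivalence \eqref{eq:schur} lands in $\Mod S_k(n,d)^\op$, whereas the corollary is about $S_k(n,d)$, and you close the gap by the proposition that $\A^\op$ is again a $k$-linear highest weight category, so (split) quasi-heredity is preserved under passing to the opposite algebra. The paper treats this silently, simply speaking of ``the category of modules over the Schur algebra,'' so your version is slightly more scrupulous on a point that does deserve a word; otherwise the two proofs coincide.
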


\section{Characteristic tilting objects and Ringel duality}

Fix a commutative ring $k$ and an integer $d\ge 0$. We describe the
characteristic tilting object for the highest weight category
$\rep\Ga^d_k$ and show that $\rep\Ga^d_k$ is Ringel self-dual. These
results are due to Donkin \cite{Do1993} when $k$ is a field.

We begin with some preparations and recall the following result.

\begin{prop}[{\cite[Theorem~3.7]{Bo1988}}]\label{pr:boffi}
Let $X\in\Filt(\nabla)\subseteq\rep\Ga^d_k$ and
$Y\in\Filt(\nabla)\subseteq\rep\Ga^e_k$. Then $X\otimes Y$ is in
$\Filt(\nabla)\subseteq\rep\Ga^{d+e}_k$.\qed
\end{prop}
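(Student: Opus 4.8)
The plan is to reduce by exactness to a single tensor product of costandard functors, and then to settle that case by base change to fields. Throughout, $\rep\Ga^{d+e}_k$ is the ambient highest weight category. The bifunctor $-\otimes-\colon\rep\Ga^d_k\times\rep\Ga^e_k\to\rep\Ga^{d+e}_k$ is exact in each variable, since values lie in $\P_k$, (co)kernels are formed pointwise, and tensoring a short exact sequence of finitely generated projective $k$-modules with a fixed such module is exact. Filtering first $X$ and then $Y$ and using that $\Filt(\nabla)$ is closed under extensions, it therefore suffices to prove $\nabla(\la)\otimes\nabla(\mu)\in\Filt(\nabla)$ for partitions $\la$ of weight $d$ and $\mu$ of weight $e$. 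As orientation: by Corollary~\ref{co:weight} each $\nabla(\la)\cong S_\la$ is a subfunctor of $S^\la=S^{\la_1}\otimes\cdots$, so $\nabla(\la)\otimes\nabla(\mu)$ is a subfunctor of $S^\la\otimes S^\mu=S^{(\la,\mu)}$, the symmetric-power functor of the concatenated composition; and $S^\beta\in\Filt(\nabla)$ for every composition $\beta$ of $d+e$, because $\Ga^\beta\in\Filt(\De)$ by the composition version of Corollary~\ref{co:cauchy-lambda} and the duality $\circ$ carries $\Filt(\De)$ to $\Filt(\nabla)$, sending $(\Ga^\beta)^\circ\cong S^\beta$. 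Thus we are looking at a distinguished subfunctor of a $\nabla$-filtered functor --- suggestive, but not in itself conclusive, since $\Filt(\nabla)$ is not closed under subobjects.

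To finish I would invoke the Ext-criterion of Proposition~\ref{pr:Ext-orth}: an object $Z$ of $\rep\Ga^{d+e}_k$ lies in $\Filt^\oplus(\nabla)$ if and only if $\Ext^1_{\Ga^{d+e}_k}(\De(\nu),Z)=0$ for all partitions $\nu$ of weight $d+e$; in a highest weight category $\Filt(\nabla)$ is closed under direct summands, so this suffices to produce the honest filtration asserted. By Remark~\ref{re:kgldim} each $\De(\nu)$ has finite projective dimension, hence a finite resolution by direct summands of finite sums of the representables $\Ga^\tau$; these, together with $\nabla(\la)$, $\nabla(\mu)$, and $\De(\nu)$, take values in $\P_k$, and the resolution is stable under any base change $k\to k'$ (the Hom-spaces between $\Ga^\tau$'s being free over $k$ by Lemma~\ref{le:totaro}). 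A universal-coefficients sequence then produces an injection $\Ext^1_{\Ga^{d+e}_k}(\De(\nu),\nabla(\la)\otimes\nabla(\mu))\otimes_kk'\hookrightarrow\Ext^1_{\Ga^{d+e}_{k'}}(\De(\nu)\otimes_kk',(\nabla(\la)\otimes\nabla(\mu))\otimes_kk')$, in which the objects on the right are the standard object and the tensor product of costandard objects over $k'$. Since $\rep\Ga^{d+e}_k\cong\mod(S_k(N,d+e)^\op,k)$ with $S_k(N,d+e)$ finitely generated projective over $k$, the left-hand group is finitely generated over $k$; letting $k'$ run over the residue fields of $k$ reduces the whole statement to the case that $k$ is a field.

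Over a field the remaining assertion is the classical theorem that a tensor product of two modules with good filtration over a Schur algebra again has a good filtration (Donkin, Mathieu, Wang, Ringel). This is the main obstacle: the reductions above are formal, whereas this field-case input is the genuine representation-theoretic content and has no short proof. An alternative, which is the route of the cited work of Boffi, is to argue directly over an arbitrary $k$ by a straightening-law computation with semistandard tableaux, exhibiting inside $S^{(\la,\mu)}$ an explicit filtration of $\nabla(\la)\otimes\nabla(\mu)$ with graded pieces $\nabla(\nu)^{c^\nu_{\la\mu}}$, where $c^\nu_{\la\mu}$ is the Littlewood--Richardson coefficient; verifying exactness and computing the ranks of the graded pieces is the combinatorial heart of that proof. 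Either way, once the case of a single pair of costandard functors is known, the first paragraph upgrades it to the stated form for arbitrary $X$ and $Y$.
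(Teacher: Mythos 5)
The paper does not prove this proposition: it is stated with a \texttt{\textbackslash qed} and a citation to \cite[Theorem~3.7]{Bo1988}, which is precisely Boffi's universal form of the Littlewood--Richardson rule, an explicit filtration of $S_\la\otimes S_\mu$ defined over $\bbZ$ (hence over any commutative ring) with factors $S_\nu^{\,c^\nu_{\la\mu}}$. From this the general statement for $X,Y\in\Filt(\nabla)$ follows by the short filtering argument in your first paragraph, which the paper silently absorbs into the attribution. Your proposal likewise does not produce a self-contained proof, and you say so candidly: after your reductions you invoke either the field-case good-filtration theorem (Donkin, Mathieu, Wang, Ringel) or Boffi's integral straightening argument, both of which are substantial results with no short proof. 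So what you have written is a correct and honest account of where the content lies, together with a valid alternative route to the same external input, rather than a proof. That is consistent with what the paper itself does, but your route is longer.

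Two technical points on the alternative route. First, Proposition~\ref{pr:Ext-orth}(2) characterises membership in $\Filt^\oplus(\nabla)$, not in $\Filt(\nabla)$. You assert that $\Filt(\nabla)$ is closed under direct summands ``in a highest weight category''; this is true (it is Ringel's theorem over a field, and holds over a general base by Rouquier's work), but it is not established anywhere in the present paper, which is careful to distinguish $\Filt$ from $\Filt^\oplus$. You would need a reference for it. Boffi's integral argument sidesteps the issue entirely, because it hands you an explicit honest $\nabla$-filtration rather than a vanishing criterion. Second, your base-change step is correct: with a finite resolution of $\De(\nu)$ by finitely generated projectives (available by Remark~\ref{re:kgldim}), the complex $\Hom_{\Ga^{d+e}_k}(P^\bullet,Z)$ consists of finitely generated projective $k$-modules (Corollary~\ref{co:Ext-orth}), base change commutes with it, and the universal-coefficients injection $\Ext^1\otimes_k k'\hookrightarrow\Ext^1_{k'}$ together with Nakayama kills $\Ext^1$ once the residue-field case is known. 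But this is considerably more machinery than the one-line appeal to Boffi, whose construction is already over $\bbZ$ and commutes with base change by design. In short: your reductions are sound, but the paper's direct citation is the cleaner choice, both because it avoids the $\Filt$-versus-$\Filt^\oplus$ question and because it gives the integral filtration in a single step.
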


\begin{prop}\label{pr:exterior-tilting}
Let $\la$ be a partition of weight $d$. Then $\La^\la$ is in
$\Filt(\De)\cap\Filt(\nabla)$.
\end{prop}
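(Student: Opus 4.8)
The plan is to realise $\La^\la=\La^{\la_1}\otimes\cdots\otimes\La^{\la_n}$ as a tensor product of objects that are simultaneously standard and costandard, and then to reduce both halves of the assertion to Boffi's theorem, Proposition~\ref{pr:boffi}, exploiting the self-duality of $\La^\la$.

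First I would record that, for a single column, $\La^r$ is both a standard and a costandard object of $\rep\Ga^r_k$: by Corollary~\ref{co:weight} together with the identification $S_{(1,\ldots,1)}=\La^d$ we have $\nabla((1^r))\cong S_{(1^r)}=\La^r$, and by Theorem~\ref{th:weight} together with $(\La^r)^\circ\cong\La^r$ we have $\De((1^r))\cong W_{(1^r)}\cong S_{(1^r)}^\circ\cong\La^r$. In particular each tensor factor $\La^{\la_i}$ lies in $\Filt(\De)\cap\Filt(\nabla)$ inside $\rep\Ga^{\la_i}_k$. An induction on the number of parts, applying Proposition~\ref{pr:boffi} at each step to the partial product and the next factor, then gives $\La^\la\in\Filt(\nabla)\subseteq\rep\Ga^d_k$.

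It remains to see $\La^\la\in\Filt(\De)$, and here I would pass to duals. The assignment $X\mapsto X^\circ$ is exact on functors taking values in $\P_k$ (since $(-)^*$ on $\P_k$ and $V\mapsto V^*$ are exact), it sends $\nabla(\la)$ to $\De(\la)$ because $\De(\la)^\circ\cong W_\la^\circ\cong S_\la\cong\nabla(\la)$, and every object of $\Filt(\De)$ or of $\Filt(\nabla)$ takes values in $\P_k$ (an extension of $k$-projective-valued functors is again $k$-projective-valued, as the relevant short exact sequences are degreewise $k$-split). Hence $(-)^\circ$ restricts to a duality interchanging $\Filt(\De)$ and $\Filt(\nabla)$. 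Since $(-)^\circ$ commutes with tensor products of $\P_k$-valued functors and $(\La^r)^\circ\cong\La^r$, the object $\La^\la$ is self-dual, so from $\La^\la\in\Filt(\nabla)$ we conclude $\La^\la\cong(\La^\la)^\circ\in\Filt(\De)$.

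I do not expect a genuine obstacle here: the only points to watch are the bookkeeping of which ambient category $\rep\Ga^e_k$ each tensor factor and each partial product lives in, and the (routine) verification that $\Filt(\De)$ and $\Filt(\nabla)$ consist of $\P_k$-valued functors so that the duality step is legitimate. The essential input is entirely contained in Proposition~\ref{pr:boffi}, and the self-duality of $\La^\la$ is what lets one obtain both filtrations from a single application of it.
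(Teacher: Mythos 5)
Your proof is correct and follows essentially the same route as the paper's: identify each column $\La^{\la_i}=S_{(1,\ldots,1)}$ as a costandard object, invoke Proposition~\ref{pr:boffi} (with the implicit induction made explicit) to place $\La^\la$ in $\Filt(\nabla)$, and then use the self-duality $\La^\la\cong(\La^\la)^\circ$ together with $\Filt(\nabla)^\circ=\Filt(\De)$ to get the other filtration. Your version merely spells out the induction over tensor factors and the exactness and $\P_k$-valuedness needed for the duality step, which the paper leaves implicit.
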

\begin{proof}
  We have $\La^{\la_i}=S_{(1,\ldots,1)}\in \Filt(\nabla)$ for all
  $i$. Thus $\La^{\la}\in \Filt(\nabla)$ by
  Proposition~\ref{pr:boffi}.  Analogously,
  $\La^\la\cong(\La^\la)^\circ\in\Filt(\nabla)^\circ=\Filt(\De)$.
\end{proof}

Next recall from \cite{Kr2013} that there is an adjoint pair of functors
\[\La^d\otimes_{\Ga^d_k}-\colon \Rep\Ga^d_k\lto
\Rep\Ga^d_k\quad\text{and}\quad \HOM_{\Ga^d_k}(\La^d,-)\colon \Rep\Ga^d_k\lto \Rep\Ga^d_k.\]

\begin{prop}[{\cite[Corollary~3.8]{Kr2013}}]\label{pr:tensor}
\pushQED{\qed}
The functor $\La^d\otimes_{\Ga^d_k}-$ maps $\Ga^\la$ to $\La^\la$
and  induces an equivalence
\[\add\{\Ga^\la\mid\la\text{ partition of }d\}\xto{\ \sim\ }
\add\{\La^\la\mid\la\text{ partition of }d\}.\qedhere\]

\end{prop}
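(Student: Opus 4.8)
The functor $\theta:=\La^d\otimes_{\Ga^d_k}-$ has the right adjoint $\HOM_{\Ga^d_k}(\La^d,-)$, hence is cocontinuous, so it is determined by its values on the projective generators $\Ga^{d,V}$, $V\in\P_k$; more generally write $\theta_m=\La^m\otimes_{\Ga^m_k}-$ for each degree $m$, so that $\theta=\theta_d$. The plan has two parts: (i) compute $\theta$ on the projectives $\Ga^\la$, and (ii) show that $\theta$ is bijective on morphisms between them. Part (i) is formal; part (ii) is where the content lies. The one external input I would take from \cite{Kr2013} is the basic identification $\theta(\Ga^{d,V})\cong\La^d(\Hom(V,-))$, the parametrised exterior power with value $\La^d(\Hom(V,W))$ at $W$; in particular $\theta(\Ga^{d,k})=\La^d(\Hom(k,-))=\La^d$.

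For (i) I would use the external tensor structure. From $\Hom(V\oplus V',-)\cong\Hom(V,-)\oplus\Hom(V',-)$ and the exterior-algebra decomposition $\La^{d+e}(M\oplus M')\cong\bigoplus_{i+j=d+e}\La^iM\otimes\La^jM'$, compared with the decomposition $\Ga^{d+e,V\oplus V'}\cong\bigoplus_{i+j=d+e}\Ga^{i,V}\otimes\Ga^{j,V'}$ from \eqref{eq:decomp}, one obtains after applying $\theta_{d+e}$ a summandwise identification $\theta_{d+e}(\Ga^{d,V}\otimes\Ga^{e,V'})\cong\theta_d(\Ga^{d,V})\otimes\theta_e(\Ga^{e,V'})$; this is legitimate because both decompositions are induced by the Hopf-algebra structure maps of $\Ga^\bullet$, which $\theta$ carries to the corresponding maps for $\La^\bullet$. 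Specialising to $V=V'=k$, writing $\Ga^r=\Ga^{r,k}$, and iterating gives $\theta(\Ga^\la)=\theta(\Ga^{\la_1}\otimes\cdots\otimes\Ga^{\la_n})\cong\La^{\la_1}\otimes\cdots\otimes\La^{\la_n}=\La^\la$. Since $\Rep\Ga^d_k$ is idempotent complete, this already shows $\theta$ maps $\add\{\Ga^\la\}$ essentially surjectively onto $\add\{\La^\la\}$.

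For (ii) I must show that $\theta\colon\Hom_{\Ga^d_k}(\Ga^\mu,\Ga^\la)\to\Hom_{\Ga^d_k}(\La^\mu,\La^\la)$ is bijective for all partitions $\la,\mu$ of weight $d$. Both sides are finitely generated projective over $k$: the source by Lemma~\ref{le:totaro}, the target by Corollary~\ref{co:Ext-orth} since $\La^\mu,\La^\la\in\Filt(\De)\cap\Filt(\nabla)$ by Proposition~\ref{pr:exterior-tilting}. As every ingredient is compatible with base change, I would reduce by Nakayama to the case of a field. In characteristic zero the category is semisimple and $\La^d$ is a direct summand of $\otimes^d$ via the antisymmetrising idempotent in $k\frS_d$, so $\theta$ is plainly an equivalence there; hence the two Hom-spaces have the same rank, and since this rank is independent of the field (as one sees over the integral form), over a field of positive characteristic the map to check is between spaces of equal finite dimension, so it suffices to prove injectivity. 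For that I would make $\theta$ explicit on the standard-morphism basis $\{\g_A\}$ of Lemma~\ref{le:totaro}, expecting $\theta(\g_A)$ to be the ``exterior'' standard morphism $\La^\mu\to\La^\la$ obtained by replacing the co- and multiplications of $\Ga^\bullet$ by those of $\La^\bullet$, and show these remain linearly independent. An alternative is to use the adjunction $(\theta,\HOM_{\Ga^d_k}(\La^d,-))$: bijectivity on all these Hom-spaces follows once the unit $\Ga^\la\to\HOM_{\Ga^d_k}(\La^d,\La^\la)$ is an isomorphism, which can be tested on weight spaces via Lemma~\ref{le:mor}. The main obstacle is exactly this: obtaining a firm enough grip on $\Hom_{\Ga^d_k}(\La^\mu,\La^\la)$ — either an explicit basis, or the unit computation — to conclude that $\theta$ is bijective on it; everything else is formal.

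Finally, granting (i) and (ii), the functor $\add\{\Ga^\la\}\to\add\{\La^\la\}$ is fully faithful: for $P=\bigoplus_\la\Ga^\la$ it carries $\End_{\Ga^d_k}(P)=\bigoplus_{\la,\mu}\Hom_{\Ga^d_k}(\Ga^\mu,\Ga^\la)$ isomorphically onto $\End_{\Ga^d_k}(\theta P)=\bigoplus_{\la,\mu}\Hom_{\Ga^d_k}(\La^\mu,\La^\la)$, and every Hom-space inside $\add P$ and inside $\add\theta P$ is assembled from these components. Together with the essential surjectivity from (i), this yields the asserted equivalence; the details are carried out in \cite[Corollary~3.8]{Kr2013}.
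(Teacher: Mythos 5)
The paper states this proposition with a cited reference and no proof, so there is no paper argument to compare against; I am evaluating your proposal on its own merits.

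Your plan is sensible and your identification of the two obstacles is accurate, but as you yourself concede at the end of part (ii), the proof is not actually completed. Two genuine gaps remain. First, in part (i), the ``summandwise identification'' $\theta_{d+e}(\Ga^{d,V}\otimes\Ga^{e,V'})\cong\theta_d(\Ga^{d,V})\otimes\theta_e(\Ga^{e,V'})$ is asserted on the grounds that $\theta$ ``carries the Hopf-algebra structure maps of $\Ga^\bullet$ to those of $\La^\bullet$''; this is precisely the kind of multiplicativity of $\La^\bullet\otimes_{\Ga^\bullet_k}-$ with respect to the external tensor product that has to be \emph{established}, not invoked. Concretely, one must verify that the idempotents in $\End_{\Ga^d_k}(\Ga^{d,k^n})\cong S_k(n,d)$ cutting out the summands $\Ga^\la$ in \eqref{eq:decomp} act on $\La^d\Hom(k^n,-)=\La^d\otimes_{\Ga^d_k}\Ga^{d,k^n}$ so as to cut out exactly the summands $\La^\la$; this is a computation with the maps \eqref{eq:comult} that you do not carry out.

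Second, part (ii) is where the mathematical content lies, and it is left open. Your reduction to a field and the rank count are reasonable --- in characteristic zero both $\dim\Hom_{\Ga^d_k}(\Ga^\mu,\Ga^\la)$ and $\dim\Hom_{\Ga^d_k}(\La^\mu,\La^\la)$ equal $\sum_\nu K_{\nu\la}K_{\nu\mu}$, the number of non-negative integer matrices with prescribed row and column sums, since conjugation $\nu\mapsto\nu'$ is a bijection on partitions --- but the injectivity you then need is not proved. Note also that the expected formula $\theta(\g_A)=$ ``exterior standard morphism'' must be verified, and here signs matter: the exterior comultiplication $\La^{\mu_j}\to\bigotimes_i\La^{a_{ij}}$ and multiplication $\bigotimes_j\La^{a_{ij}}\to\La^{\la_i}$ carry sign conventions absent from the divided-power case, and it is not automatic that the resulting maps remain a basis of $\Hom_{\Ga^d_k}(\La^\mu,\La^\la)$, nor that $\theta(\g_A)$ equals this map on the nose rather than up to a sign possibly depending on $A$. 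Until you either compute $\theta(\g_A)$ explicitly and prove linear independence, or show the unit $\Ga^\la\to\HOM_{\Ga^d_k}(\La^d,\La^\la)$ is an isomorphism (say by computing weight spaces via Lemma~\ref{le:mor}), full faithfulness --- and hence the stated equivalence --- is not established. The argument in \cite{Kr2013} proceeds within a Koszul-duality framework rather than via the elementary basis-matching you are attempting, so you cannot lean on that reference as a backstop for the specific steps you have skipped.
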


\begin{prop}\label{pr:delta-to-nabla}
  Let $\la$ be a partition of weight $d$ and $\la'$ its conjugate
  partition. The functor $\La^d\otimes_{\Ga^d_k}-$ maps $W_\la$ to
  $S_{\la'}$.
\end{prop}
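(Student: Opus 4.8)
The plan is to apply the right exact functor $F=\La^d\otimes_{\Ga^d_k}-$ to the presentation of the Weyl functor $W_\la$ used in the proof of Theorem~\ref{th:weight}, and to recognise the resulting cokernel as a presentation of the Schur functor $S_{\la'}$. Recall that $W_\la\cong\De(\la)$ admits the presentation \eqref{eq:presentation}
\[
\bigoplus_{i\ge 1}\bigoplus_{t=1}^{\la_{i+1}}\Ga^{\la(i,t)}\xrightarrow{\ \a\ }\Ga^\la\lto W_\la\lto 0,
\]
where $\la(i,t)=(\la_1,\ldots,\la_{i-1},\la_i+t,\la_{i+1}-t,\la_{i+2},\ldots)$ and each component of $\a$ is the standard morphism $\g_A$ attached to $A=\diag(\la_1,\la_2,\ldots)+tE_{i+1,i}-tE_{i+1,i+1}$; concretely $\g_A$ is the identity on every tensor factor except the $i$th and $(i+1)$st, where it comultiplies $\Ga^{\la_i+t}\lto\Ga^{\la_i}\otimes\Ga^{t}$ and then multiplies $\Ga^{t}\otimes\Ga^{\la_{i+1}-t}\lto\Ga^{\la_{i+1}}$. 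The functor $F$ is right exact, being a left adjoint; and by Proposition~\ref{pr:tensor} it carries each $\Ga^\mu$ to $\La^\mu$, which applies to the compositions $\la(i,t)$ as well since $\Ga^\mu$ and $\La^\mu$ depend on $\mu$ only up to a canonical reordering of its entries. Applying $F$ to the presentation above therefore yields an exact sequence
\[
\bigoplus_{i\ge 1}\bigoplus_{t=1}^{\la_{i+1}}\La^{\la(i,t)}\xrightarrow{\ F(\a)\ }\La^\la\lto F(W_\la)\lto 0.
\]

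The crucial step is to identify $F(\a)$. As $F$ is fully faithful on $\add\{\Ga^\mu\}$ by Proposition~\ref{pr:tensor}, it induces isomorphisms $\Hom_{\Ga^d_k}(\Ga^{\la(i,t)},\Ga^\la)\xto{\sim}\Hom_{\Ga^d_k}(\La^{\la(i,t)},\La^\la)$, and one must check that these carry the divided power standard morphism $\g_A$ to the exterior standard morphism attached to the same matrix $A$, namely to the map that comultiplies $\La^{\la_i+t}\lto\La^{\la_i}\otimes\La^{t}$ and then multiplies $\La^{t}\otimes\La^{\la_{i+1}-t}\lto\La^{\la_{i+1}}$ (identity on the other factors). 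This is the only input about $F$ needed beyond Proposition~\ref{pr:tensor}, and I expect this matching of standard morphisms to be the main obstacle; it is however built into the construction of $\La^d\otimes_{\Ga^d_k}-$ in \cite{Kr2013}, which is designed precisely to intertwine the (co)multiplications of divided and exterior powers, so I would invoke that reference, together with the exterior analogue of Lemma~\ref{le:totaro} describing $\Hom_{\Ga^d_k}(\La^\mu,\La^\nu)$ by standard morphisms. Granting this, $F(\a)$ is the sum of the box-moving maps $\La^{\la(i,t)}\lto\La^\la$.

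Finally I would recognise $F(W_\la)$ as $S_{\la'}$. Since $(\la')'=\la$, the Schur module $S_{\la'}V$ is by definition the image of $\La^\la V=\La^{\la_1}V\otimes\cdots\otimes\La^{\la_n}V\lto V^{\otimes d}\lto S^{\la'_1}V\otimes\cdots\otimes S^{\la'_m}V$, and the presentation of Schur functors in \cite{ABW1982} exhibits the kernel of the canonical epimorphism $\La^\la\twoheadrightarrow S_{\la'}$ as generated by exactly the box-moving maps $\La^{\la(i,t)}\lto\La^\la$ occurring above; indeed this presentation has the same shape as \eqref{eq:presentation} with divided powers replaced throughout by exterior powers, the rows of $\la$ being read as the columns of $\la'$. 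Hence $\Im F(\a)=\Ker(\La^\la\to S_{\la'})$, and comparing the two right exact sequences gives $F(W_\la)\cong S_{\la'}$. Put differently, $F$ turns the presentation \eqref{eq:presentation} of $W_\la=W_{(\la')'}$ term by term into its $\Ga\rightsquigarrow\La$ substitution, which is precisely the ABW presentation of $S_{\la'}$.
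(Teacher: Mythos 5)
Your proof is correct and follows essentially the same route as the paper's: apply the right-exact functor $\La^d\otimes_{\Ga^d_k}-$ to the presentation \eqref{eq:presentation}, use Proposition~\ref{pr:tensor} to turn each $\Ga^\mu$ into $\La^\mu$, and compare the result with the ABW presentation of $S_{\la'}$. The only difference is cosmetic: you explicitly flag (and delegate to \cite{Kr2013}) the identification of $F(\g_A)$ with the exterior box-moving maps, a step the paper's proof leaves implicit when it asserts that $\b$ is ``the analogue of~$\a$.''
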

\begin{proof}
  We use the presentation \eqref{eq:presentation} of $W_\la$, and the
  functor $\La^d\otimes_{\Ga^d_k}-$ maps this to the following exact
  sequence.
\begin{equation*}
  \bigoplus_{i\ge
    1}\bigoplus_{t=1}^{\la_{i+1}}\La^{\la(i,t)}\lto\La^\la\lto
 \La^d\otimes_{\Ga^d_k} W_\la\lto 0
\end{equation*}
On the other hand, $S_{\la'}$ admits the presentation
\begin{equation*}
  \bigoplus_{i\ge
    1}\bigoplus_{t=1}^{\la_{i+1}}\La^{\la(i,t)}\stackrel{\b}\lto\La^\la\lto
  S_\la\lto 0
\end{equation*}
where $\b$ is the analogue of the morphism $\a$ in \eqref{eq:presentation} 
\cite[Theorem~ II.2.16]{ABW1982}. Thus the assertion follows.
\end{proof}

Let $\la$ be a partition of weight $d$ and $\mathbf\Ga(W_\la)$  a projective resolution of $W_\la$.  Then
the left derived functor of $\La^d\otimes_{\Ga^d_k}-$ evaluated at
$W_\la$ is given by the homology of
$\La^d\otimes_{\Ga^d_k}\mathbf\Ga(W_\la)$.

\begin{lem}\label{le:tor}
  We have $H_p(\La^d\lotimes_{\Ga^d_k}\mathbf\Ga(W_\la))=0$ for $p>0$.
\end{lem}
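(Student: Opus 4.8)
The statement is that the Weyl functor $W_\la$ has no higher Tor against $\La^d$ over the Schur category $\Ga^d\P_k$. The natural approach is to realise $W_\la\cong\De(\la)$ as a subquotient of the Cauchy filtration, or more efficiently, to use the filtration of $\Ga^\mu$ by standard objects from Corollary~\ref{co:cauchy-lambda} and dimension-shifting. First I would recall that $\La^d\otimes_{\Ga^d_k}-$ maps each projective $\Ga^\la$ to $\La^\la$ by Proposition~\ref{pr:tensor}, so its left derived functor vanishes on projectives in positive degrees automatically; the content is to push this vanishing from projectives to the standard objects $\De(\la)=W_\la$.

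The key step is an induction on the partition $\la$ with respect to the lexicographic order, running downward from $(1,\dots,1)$ (the largest, where $W_{(1,\dots,1)}=\otimes^d=\Ga^{(1,\dots,1)}$ is projective and there is nothing to prove). For the inductive step I would use Corollary~\ref{co:cauchy-lambda} applied to $\mu=\la$: it gives a filtration of $\Ga^\la$ whose top factor is $Y_\la/Y_{\la^+}\cong\De(\la)$ (since $K_{\la\la}=1$) and whose remaining factors $Y_\nu/Y_{\nu^+}\cong\De(\nu)^{K_{\nu\la}}$ involve only $\nu>\la$ in the dominance order, hence $\nu>\la$ in the lexicographic order by the remark that dominance implies lexicographic. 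So there is an exact sequence $0\to Y_{\la^+}\to\Ga^\la\to\De(\la)\to 0$ in which $Y_{\la^+}$ is filtered by $\De(\nu)$'s with $\nu>\la$. By the induction hypothesis each such $\De(\nu)$ has vanishing higher $\La^d$-Tor, hence so does $Y_{\la^+}$ (higher Tor is additive along short exact sequences of the filtration). Since $\Ga^\la$ is projective, the long exact sequence of $\La^d\lotimes_{\Ga^d_k}-$ applied to $0\to Y_{\la^+}\to\Ga^\la\to\De(\la)\to 0$ forces $H_p(\La^d\lotimes_{\Ga^d_k}\De(\la))=0$ for $p>1$ immediately, and for $p=1$ it gives the exact sequence
\[
0\to H_1(\La^d\lotimes_{\Ga^d_k}\De(\la))\to \La^d\otimes_{\Ga^d_k}Y_{\la^+}\to\La^\la\to \La^d\otimes_{\Ga^d_k}\De(\la)\to 0,
\]
so the $p=1$ vanishing is equivalent to injectivity of the map $\La^d\otimes_{\Ga^d_k}Y_{\la^+}\to\La^\la$ on the nose.

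The main obstacle is this last injectivity. I expect to handle it by identifying $\La^d\otimes_{\Ga^d_k}\De(\la)$ with $S_{\la'}$ via Proposition~\ref{pr:delta-to-nabla}, so that the cokernel has the right size, and then comparing ranks: over a field (or after reducing to $k=\bbZ$ and using that everything in sight is free of the same rank after base change), the alternating sum of ranks in the displayed four-term sequence must be zero, and since $\La^d\otimes_{\Ga^d_k}-$ applied to the $\De(\nu)$-filtration of $Y_{\la^+}$ yields exactly the $S_{\nu'}$'s with the stated multiplicities, a rank count against the known rank of $S_{\la'}$ and the ranks of the $S_{\nu'}$ forces $H_1=0$. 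Alternatively, and perhaps more cleanly, one can prove directly by downward induction that $\La^d\otimes_{\Ga^d_k}-$ is exact on $\Filt(\De)$ — equivalently that it sends the whole $\De$-filtration of any $\Ga^\mu$ to a genuine filtration with factors $S_{\nu'}$ — which is the statement already implicit in Propositions~\ref{pr:tensor} and \ref{pr:delta-to-nabla}; the Lemma then follows because a functor that is exact on a resolving-type subcategory containing the projectives has vanishing higher left derived functors there. I would present the rank-count version since the ingredients (Cauchy factors, $W_\la\leftrightarrow S_{\la'}$) are all already in hand, flagging that the rank comparison may be done after base change to a field, all modules being finitely generated projective over $k$ by construction.
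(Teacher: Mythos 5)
Your proposal takes a genuinely different route from the paper. The paper's proof is short and avoids all combinatorics: since the $S^\mu$ form a set of injective cogenerators, one has
$\Hom_{\Ga^d_k}\bigl(\La^d\otimes_{\Ga^d_k}\mathbf\Ga(W_\la),S^\mu\bigr)\cong
\Hom_{\Ga^d_k}\bigl(\mathbf\Ga(W_\la),\HOM_{\Ga^d_k}(\La^d,S^\mu)\bigr)$ by adjunction, and
$\HOM_{\Ga^d_k}(\La^d,S^\mu)\cong\La^\mu$ by Proposition~\ref{pr:tensor} and duality; the homology of the right-hand complex is $\Ext^p_{\Ga^d_k}(W_\la,\La^\mu)$, which vanishes for $p>0$ by Corollary~\ref{co:Ext-orth} together with Proposition~\ref{pr:exterior-tilting}. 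Thus the Tor vanishing is reduced, via duality, to the already-established Ext-orthogonality between $\Filt(\De)$ and $\Filt(\nabla)$, with no induction and no rank bookkeeping.

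Your inductive scheme is sound in outline --- the base case $W_{(1,\dots,1)}=\Ga^{(1,\dots,1)}$ is projective, the filtration of $\Ga^\la$ from Corollary~\ref{co:cauchy-lambda} with top factor $\De(\la)$ and remaining factors $\De(\nu)^{K_{\nu\la}}$ for $\nu>\la$ is correct, and the reduction to injectivity of $\La^d\otimes_{\Ga^d_k}Y_{\la^+}\to\La^\la$ is a clean consequence of the long exact sequence. However, the rank count you defer to is not free: it requires the identity
$\rank\La^\la V=\sum_{\nu\ge\la}K_{\nu\la}\cdot\rank S_{\nu'}V$,
equivalently $e_\la=\sum_\nu K_{\nu\la}\,s_{\nu'}$ in the ring of symmetric functions. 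This is the ``skew'' companion to the identity $h_\la=\sum_\nu K_{\nu\la}\,s_\nu$ encoded in Corollary~\ref{co:cauchy-lambda}, but the paper never establishes a Cauchy-type filtration for $\La^d$, nor does it categorify the involution swapping $h$ and $e$. So this step is a genuine gap as written; it could be filled by citing the identity from symmetric function theory (or by proving a skew Cauchy filtration), but that is additional nontrivial work. Your alternative phrasing --- that exactness of $\La^d\otimes_{\Ga^d_k}-$ on $\Filt(\De)$ is ``already implicit'' in Propositions~\ref{pr:tensor} and~\ref{pr:delta-to-nabla} --- is circular, since that exactness is precisely what Lemma~\ref{le:tor} asserts. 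One further point worth making explicit: over a general commutative $k$ the equality of ranks forces $H_1=0$ only because the image of $\La^d\otimes_{\Ga^d_k}Y_{\la^+}\to\La^\la$ is the kernel of the split epimorphism onto $S_{\la'}$, hence is finitely generated projective of the complementary rank, so that a surjection from a projective of the same rank is forced to be an isomorphism; this deserves a sentence rather than a flag.
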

\begin{proof}
The objects $S^\mu$ form a set of injective cogenerators of $\Rep\Ga^d_k$.  Adjointness gives
\[\Hom_{\Ga^d_k}(\La^d\otimes_{\Ga^d_k}\mathbf\Ga(W_\la),S^\mu)\cong
\Hom_{\Ga^d_k}(\mathbf\Ga(W_\la),\HOM_{\Ga^d_k}(\La^d ,S^\mu)).\]
We have
\[\HOM_{\Ga^d_k}(\La^d,S^\mu)\cong(\La^d\otimes_{\Ga^d_k}\Ga^\mu)^\circ
\cong(\La^\mu)^\circ\cong\La^\mu\]
where the first isomorphism follows from \cite[Lemma~2.7]{Kr2013} and
the second uses Proposition~\ref{pr:tensor}. It remains to observe
that 
\[H_p(\Hom_{\Ga^d_k}(\mathbf\Ga(W_\la),\HOM_{\Ga^d_k}(\La^d
,S^\mu)))\cong\Ext^p_{\Ga^d_k}(W_\la,\La^\mu)\]
vanishes for $p>0$ by Corollary~\ref{co:Ext-orth} and
Proposition~\ref{pr:exterior-tilting}.
\end{proof}

\begin{thm}\label{th:char-tilting}
The functor $\La^d\otimes_{\Ga^d_k}-$
  induces an equivalence
\[\Filt\{\De(\la)\mid \la\text{ partition of }d\}\xto{\ \sim\ }
\Filt\{\nabla(\la)\mid \la\text{ partition of }d\}.\]
Therefore the highest weight category $\rep\Ga^d_k$ is Ringel
self-dual with characteristic tilting object $\bigoplus_\la\La^\la$.
\end{thm}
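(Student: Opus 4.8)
The plan is to prove that the functor $F=\La^d\otimes_{\Ga^d_k}-$ restricts to the asserted equivalence $\Filt(\De)\xto{\sim}\Filt(\nabla)$, and then to read off Ringel self-duality. The first step is to check that $F$ is \emph{exact} on $\Filt(\De)$ and carries $\Filt(\De)$ into $\Filt(\nabla)$. Since $F$ is right exact, exactness on $\Filt(\De)$ will follow once we know $L_pF(X)=0$ for all $X\in\Filt(\De)$ and $p>0$: by Theorem~\ref{th:weight} we have $\De(\la)\cong W_\la$, and $L_pF(W_\la)=H_p(\La^d\lotimes_{\Ga^d_k}\mathbf\Ga(W_\la))=0$ for $p>0$ by Lemma~\ref{le:tor}, so the general case follows by d\'evissage along a $\De$-filtration, using the long exact sequences of left derived functors. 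For the image, Proposition~\ref{pr:delta-to-nabla} together with Theorem~\ref{th:weight} and Corollary~\ref{co:weight} gives $F(\De(\la))\cong F(W_\la)\cong S_{\la'}\cong\nabla(\la')$; applying the (now exact) functor $F$ to a $\De$-filtration of any $X\in\Filt(\De)$ therefore produces a $\nabla$-filtration of $F(X)$, so $F(\Filt(\De))\subseteq\Filt(\nabla)$.

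The second and main step is to upgrade the equivalence on projectives from Proposition~\ref{pr:tensor} to an equivalence of the $\Filt$-categories. Here I would use that $\bigoplus_\la\Ga^\la$, with $\la$ running over partitions of $d$, is a projective generator of $\Filt(\De)$: each $\Ga^\la$ lies in $\Filt(\De)$ by Corollary~\ref{co:cauchy-lambda}, these functors generate $\rep\Ga^d_k$, and $\Filt^\oplus(\De)$ is closed under kernels of epimorphisms (the remark following Proposition~\ref{pr:Ext-orth}); together with the finiteness of projective dimensions (Remark~\ref{re:kgldim}) this shows every object of $\Filt^\oplus(\De)$ has a finite resolution by objects of $\add\{\Ga^\la\mid\la\text{ a partition of }d\}$ that stays inside $\Filt^\oplus(\De)$. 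Now $F$ is exact on $\Filt^\oplus(\De)$, it restricts to an equivalence $\add\{\Ga^\la\}\xto{\sim}\add\{\La^\la\}$ by Proposition~\ref{pr:tensor}, and each $\La^\la$ lies in $\Filt^\oplus(\De)\cap\Filt^\oplus(\nabla)$ by Proposition~\ref{pr:exterior-tilting}, hence is $\Ext$-orthogonal to $\Filt^\oplus(\nabla)$ in positive degrees by Corollary~\ref{co:Ext-orth}. Feeding these resolutions into the Hom-complexes then shows that $\Hom_{\Ga^d_k}(X,Y)\to\Hom_{\Ga^d_k}(FX,FY)$ is bijective for $X,Y\in\Filt^\oplus(\De)$, i.e.\ $F|_{\Filt(\De)}$ is fully faithful, and the same bookkeeping shows $F$ induces isomorphisms $\Ext^1_{\Ga^d_k}(X,Y)\xto{\sim}\Ext^1_{\Ga^d_k}(FX,FY)$. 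Consequently the essential image of $F|_{\Filt(\De)}$ is closed under extensions; as it contains every $\nabla(\mu)=F(\De(\mu'))$, it exhausts $\Filt(\nabla)$, and $F$ is the desired equivalence.

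For the last assertions, $\bigoplus_\la\La^\la=F\bigl(\bigoplus_\la\Ga^\la\bigr)$ is the image under this equivalence of a projective generator, hence a projective generator of $\Filt(\nabla)$, so it is a characteristic tilting object by Corollary~\ref{co:tilting-characterisation}. Its endomorphism ring is canonically isomorphic to $\End_{\Ga^d_k}\bigl(\bigoplus_\la\Ga^\la\bigr)$ by Proposition~\ref{pr:tensor}, and the latter is the endomorphism ring of a projective generator of $\rep\Ga^d_k$; moreover, under the equivalence $\Filt(\nabla)\xto{\sim}\Filt(\De')$ of Corollary~\ref{co:tilting} the standard objects of the Ringel dual correspond to the $\nabla(\mu)$, hence via conjugation to the standard objects $\De(\mu')$ of $\rep\Ga^d_k$. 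Thus $\rep\Ga^d_k$ is Ringel self-dual with characteristic tilting object $\bigoplus_\la\La^\la$.

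I expect the homological comparison in the second step to be the main obstacle: one must be careful that the resolutions of objects of $\Filt^\oplus(\De)$ by divided-power functors stay inside $\Filt^\oplus(\De)$ and that their $F$-images stay inside $\Filt^\oplus(\nabla)$, so that the relevant $\Hom$- and $\Ext^1$-groups are computed correctly on both sides and the equivalence of Proposition~\ref{pr:tensor} can be applied term by term. Everything else is formal once this comparison, the exactness of $F$ on $\Filt(\De)$, and the identification $F(\De(\la))\cong\nabla(\la')$ are in place.
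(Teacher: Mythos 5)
Your proposal is correct and follows essentially the same route as the paper: exactness of $\La^d\otimes_{\Ga^d_k}-$ on $\Filt(\De)$ via Lemma~\ref{le:tor}, the identification $F(\De(\la))\cong\nabla(\la')$ via Proposition~\ref{pr:delta-to-nabla}, the equivalence on projectives via Proposition~\ref{pr:tensor} together with projectivity of the $\La^\la$ in $\Filt(\nabla)$ (Corollary~\ref{co:Ext-orth} and Proposition~\ref{pr:exterior-tilting}), and the conclusion via Corollaries~\ref{co:tilting-characterisation} and \ref{co:tilting}. The only difference is one of exposition: the paper states tersely that an exact functor sending the projective generator of $\Filt(\De)$ fully faithfully to a projective generator of $\Filt(\nabla)$ ``gives the equivalence,'' whereas you unpack this Morita-type step with explicit finite projective resolutions staying inside $\Filt^\oplus(\De)$ and a Hom/$\Ext^1$ comparison — a valid elaboration of the same argument, not a different proof.
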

\begin{proof}
  We identify $\De(\la)=W_\la$ and $\nabla(\la)=S_\la$ for each
  partition $\la$; see Theorem~\ref{th:weight} and
  Corollary~\ref{co:weight}.

  The functor $\La^d\otimes_{\Ga^d_k}-$ maps $\De(\la)$ to
  $\nabla(\la')$ by Proposition~\ref{pr:delta-to-nabla}, and it is
  exact on $\Filt(\De)$ by Lemma~\ref{le:tor}.  Note that each object
  $\La^\la=\La^d\otimes_{\Ga^d_k}\Ga^\la$ is projective in
  $\Filt(\nabla)$ by Corollary~\ref{co:Ext-orth} and
  Proposition~\ref{pr:exterior-tilting}.  Thus the functor
  $\La^d\otimes_{\Ga^d_k}-$ maps the projective generators of
  $\Filt(\De)$ fully faithfully to projective generators of
  $\Filt(\nabla)$; see Proposition~\ref{pr:tensor}. This gives the
  equivalence.  The object $\bigoplus_\la\La^\la$ is a characteristic
  tilting object, because it is a projective generator of
  $\Filt(\nabla)$; see Corollary~\ref{co:tilting-characterisation}.
  The property of $\rep\Ga^d_k$ to be Ringel self-dual follows from
  Corollary~\ref{co:tilting}.
\end{proof}

\section*{Acknowledgements} 
I am grateful to Andrew Hubery for many helpful comments on this
subject. Additional thanks are due to Bernard Leclerc for pointing me
to \cite{Ca1841}.

\end{document}